\renewcommand{\geq}{\geqslant}
\renewcommand{\leq}{\leqslant}
\newtheorem{thm}{Theorem}
\newtheorem{rem}[thm]{Remark}
\newtheorem{cor}[thm]{Corollary}
\newtheorem{prop}[thm]{Proposition}
\newtheorem{lem}[thm]{Lemma}
\newtheorem*{prop*}{Proposition}
\renewcommand{\epsilon}{\varepsilon}
\definecolor{darkgreen}{rgb}{0,0.4,0}
\definecolor{MyDarkBlue}{rgb}{0,0.08,0.50}
\definecolor{BrickRed}{rgb}{0.65,0.08,0}
\title{Harmonic functions for singular quadrant walks}
\author{Viet Hung Hoang} 
\address{Institut Denis Poisson, UMR CNRS 7013, Universit\'e de Tours et Universit\'e d'Orl\'eans, Parc de Grandmont, 37200 Tours, France}
\email{viet.hung-hoang@lmpt.univ-tours.fr}
\author{Kilian Raschel}
\address{Laboratoire Angevin de Recherche en Math\'ematiques, UMR CNRS 6093, Universit\'e d'Angers, 2 Boulevard Lavoisier, 49000 Angers, France}
\email{raschel@math.cnrs.fr}
\author{Pierre Tarrago}
\address{Laboratoire de Probabilit\'es, Statistique et Mod\'elisation, UMR CNRS 8001, Sorbonne Universit\'e, 4 Place Jussieu, 75005 Paris, France}
\email{pierre.tarrago@sorbonne-universite.fr}
\thanks{This project has received funding from the European Research Council (ERC) under the European Union's Horizon 2020 research and innovation programme under the Grant Agreement No.\ 759702, from Centre Henri Lebesgue (programme ANR-11-LABX-0020-0), from the ANR DeRerumNatura (ANR-19-CE40-0018) and ANR CORTIPOM (ANR-21-CE40-0019).}
\keywords{Discrete harmonic functions, singular random walks in the quarter plane, compensation approach, Green functions, Martin boundary}
\subjclass{Primary 31C35, 60G50; Secondary 60J45, 60J50, 31C20, 11B39}
\date{\today}
\begin{document}

\begin{abstract}
We consider discrete (time and space) random walks confined to the quarter plane, with jumps only in directions $(i,j)$ with $i+j\geq0$ and small negative jumps, i.e., $i,j\geq -1$. These walks are called singular, and were recently intensively studied from a combinatorial point of view. In this paper, we show how the compensation approach introduced in the 90ies by Adan, Wessels and Zijm may be applied to compute  positive harmonic functions with Dirichlet boundary conditions. In particular, in case the random walks have a drift with positive coordinates, we derive an explicit formula for the escape probability, which is the probability to tend to infinity without reaching the boundary axes. These formulas typically involve famous recurrent sequences, such as the Fibonacci numbers. As a second step, we propose a probabilistic interpretation of the previously constructed harmonic functions and prove that they allow to compute all positive harmonic functions of these singular walks. To that purpose, we derive the asymptotics of the Green functions in all directions of the quarter plane and use Martin boundary theory.
\end{abstract}

\maketitle

\section{Introduction}
\subsection*{Espace probability for random walks in cones}
Consider a multidimensional lattice random walk $\{S(n)\}_{n\geq 1}$, i.e., $S(n) = X(1)+\cdots + X(n)$ for all $n\geq 1$. Given a cone $C\subset \mathbb R^d$, $d\geq 1$, introduce the associated first exit time
\begin{equation*}
   \tau_x = \inf\{n\geq 0 : x+S(n)\notin C\}\leq \infty.
\end{equation*}
If the drift of the increment distribution belongs to the cone, then (ignoring pathological behaviours) for $x$ interior to $C$, the escape probability (also called survival probability) 
\begin{equation}
\label{eq:survival_probability}
   \mathbb P(\tau_x=\infty) = \mathbb P(\forall n\geq 0, x+S(n)\in C)
\end{equation}
is strictly positive, and defines a discrete harmonic function with Dirichlet boundary conditions.

The question at the origin of the present work is the following: does this natural harmonic function admit an expression in closed form? The harmonicity property is equivalent to a recurrence relation, which in dimension $1$ may be easily solved, at least in the bounded jump case. On the other hand, it is known that the behaviour of solutions to multivariate recurrences is much harder and vast \cite{BMPe-00,BMPe-03}, a fortiori with boundary conditions depending on a cone; there is no hope, in general, to compute explicitly the escape probability \eqref{eq:survival_probability}.

\subsection*{A glimpse of our results (Part~\ref{part:construction})}

We may now state the contributions of our paper, which consists of two parts. We will introduce a class of singular random walks in dimension $2$ (see the next subsection for a precise definition, see also Figure~\ref{fig:step_sets}), and look at the case of the cone $C$ being the positive quarter plane. In Part~\ref{part:construction}, we will produce explicit expressions for the escape probability \eqref{eq:survival_probability}. To that purpose, we will use the compensation approach, as introduced in \cite{AdWeZi-90,Ad-91,AdWeZi-93} by Adan, Wessels and Zijm. Before giving more details both on the random walks considered and on the techniques used, let us present an explicit example.

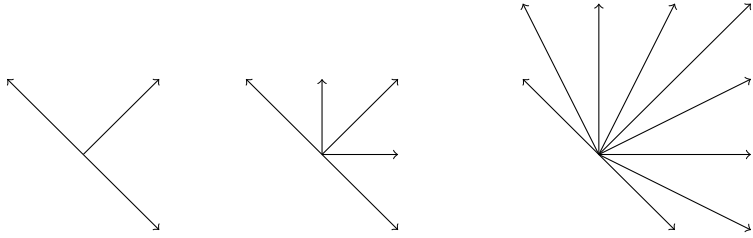
\begin{figure}
\begin{center}
\begin{tikzpicture}[scale=1.0] 
    \draw[->,white] (1,2) -- (-1,-2);
    \draw[->,white] (1,-2) -- (-1,1);
    \draw[->] (0,0) -- (1,-1);
    \draw[->] (0,0) -- (-1,1);
    \draw[->] (0,0) -- (1,1);
  \end{tikzpicture}
\begin{tikzpicture}[scale=1.0] 
    \draw[->,white] (2,2) -- (-2,-2);
    \draw[->,white] (2,-2) -- (-2,2);
    \draw[->] (0,0) -- (0,1);
    \draw[->] (0,0) -- (1,1);
    \draw[->] (0,0) -- (1,-1);
    \draw[->] (0,0) -- (-1,1);
    \draw[->] (0,0) -- (1,0);
  \end{tikzpicture}
     \begin{tikzpicture}[scale=1.0] 
  \draw[->,white] (2.5,2) -- (-1.5,-2);
    \draw[->,white] (2,-2) -- (-1.5,2);
    \draw[->] (0,0) -- (2,1);
    \draw[->] (0,0) -- (2,2);
    \draw[->] (0,0) -- (2,0);
    \draw[->] (0,0) -- (1,2);
    \draw[->] (0,0) -- (0,2);
    \draw[->] (0,0) -- (-1,2);
    \draw[->] (0,0) -- (2,-1);
    \draw[->] (0,0) -- (1,-1);
    \draw[->] (0,0) -- (-1,1);
  \end{tikzpicture}
  \end{center}
  \vspace{-5mm}
  \caption{Various step sets. From left to right: the simplest singular random walk; an arbitrary small step singular random walk; an example of singular walk with bigger jumps.}
  \label{fig:step_sets}
\end{figure}

Consider in this paragraph the model $p_{-1,1}=p_{1,1}=p_{1,-1}=\frac{1}{3}$, as on Figure~\ref{fig:step_sets} (left). To that example, our main result will entail that the escape probability starting at $x=(i,j)$ equals 
   \begin{equation}
   \label{eq:expression_escape_Fibonacci}
      \mathbb P(\tau_{(i,j)}=\infty) = 1-\frac{1}{2^i}-\frac{1}{2^j}+\frac{1}{2^i5^j}+\frac{1}{2^j5^i}-\frac{1}{5^i13^j}-\frac{1}{5^j13^i}+\frac{1}{13^i34^j}+\frac{1}{13^j34^i}+\cdots.
   \end{equation}
The integers appearing in the denominators in \eqref{eq:expression_escape_Fibonacci}, namely
$
    1, 2, 5, 13, 34, 89, 233, 610,\ldots
$
are directly related to Fibonacci sequence (\href{https://oeis.org/A000045}{A000045}), as the $n$-th term in the sequence is equal to $F_{2n-1}$. For $i=j=1$, this result is derived in \cite[Prop.~9]{MiRe-09}, using a functional equation approach.

In this paper, we will prove similar formulas for other singular random walks; in particular, we will see how to define the sequence of denominators in general (providing a nice interplay between probabilistic and arithmetic properties of these singular random walks). As a second step, we will prove that similar expressions hold for infinitely many positive harmonic functions, of the form \begin{equation}
\label{eq:f(i,j)_Ansatz}
    h(i,j) = \sum_{n\geq 0} c_n \alpha_n^i\beta_n^j,
\end{equation}
where the $\alpha_n,\beta_n,c_n$ are real constants (to be specified).

\subsection*{A glimpse of our results (Part~\ref{part:Martin_boundary})}

The main objective of our second part is to propose a probabilistic interpretation of the harmonic functions \eqref{eq:f(i,j)_Ansatz} constructed via the compensation approach. Our central result is to prove that the previous harmonic functions actually allow to construct all positive harmonic functions. In other words, the compensation approach yields an exhaustive description of positive harmonic functions for singular random walks. In concrete terms, this means that there is a correspondence between minimal positive harmonic functions and the yellow domain on the left display on Figure~\ref{fig:MB_and_escargot}, exactly the same phenomenon as in the non-singular case \cite{IRLo-10}. This result is properly stated in Corollary \ref{cor:MB}.

Among all positive harmonic functions, there is a unique bounded positive harmonic function, which is the escape probability. It corresponds asymptotically by looking at the Green function 
\begin{equation}
\label{eq:def_G}
    G(x,y) = \sum_{n\geq 1}\mathbb P(x+S(n) = y,\tau_x>n)
\end{equation}
along the drift direction.

To prove these results, we use Martin boundary theory, and we believe that several intermediate results are of independent interest. The key idea is to compute the asymptotics of the Green function \eqref{eq:def_G}
as $x$ is fixed and $y$ goes to infinity in any direction of the cone. See Theorem~\ref{thm:asymptotic_inside_green} for the main statement. While such results have been recently derived in a close context (zero drift random walks \cite{DuRaTaWa-22}, irreducible non-zero drift random walks \cite{IRLo-10}), there was no version in the literature applying to our context. The case of a boundary direction needs a particular attention, due to the interaction with the axes. 

From a technical point of view, Part~\ref{part:Martin_boundary} is inspired by the work \cite{IRLo-10}, \cite{DeWa-15} and \cite{DuRaTaWa-22}.

\subsection*{Singular random walks in the quadrant}
Throughout the paper, we assume that the distribution of the increments $X(i)$ of the random walk have transition probabilities $p_{i,j}$ in $\mathbb Z^2$ such that (see Figure~\ref{fig:step_sets}):
\begin{enumerate}
	\item\label{it:norm}$\sum_{i,j} p_{i,j} = 1$ \textit{(normalization)};
	\item\label{it:small_neg}$p_{i,j}=0$ for all $i\leq -2$ or $j\leq -2$ \textit{(small negative jumps)};
	\item $p_{-1,-1}=p_{-1,0}=p_{0,-1}=0$ \textit{(singular walks)};
	\item $p_{-1,1}p_{1,-1}\not= 0$;
	\item There exists $(i,j)$ with $i+j>0$ such that $p_{i,j}>0$ \textit{(non-degeneracy)};
	\item\label{it:Laplace}The $p_{i,j}$ admit exponential moments in the following sense: the Laplace transform $\sum_{i,j} p_{i,j}e^{ix+jy}$ is finite in a neighborhood of any point of the curve \eqref{eq:Ney-Spitzer_curve} \textit{(moment assumption)}.
\end{enumerate}
By definition, the kernel of the model is the bivariate polynomial
\begin{equation}
\label{eq:kernel}
   K(\alpha,\beta) = \alpha\beta\left(\sum_{i,j\geq -1} p_{i,j} \alpha^i\beta^j -1\right).
\end{equation}

\subsection*{Compensation approach}
This technique has been developed in the probabilistic context of stationary distributions for random walks, see \cite{AdWeZi-90,Ad-91,AdWeZi-93}. It does not aim directly at obtaining a solution for a generating function (as it is usual for quadrant walk problems), but rather tries to find a solution for its coefficients, in our case the escape probability $h(i,j) = \mathbb P(\tau_{(i,j)}=\infty)$ starting from $(i,j)$. 

As a discrete harmonic function, $h(i,j)$ satisfies certain recursion relations (coming from harmonicity in our case), which differ depending on whether the state $(i, j)$ lies on the boundary or not:
\begin{align}
\label{eq:harmonicity_relation_interior}
   h(i,j) &= \sum_{k,\ell\geq -1} p_{k,\ell} h(i+k,j+\ell),\qquad \forall i,j\geq 1,\\
   h(i,0)&=0,\qquad \forall i\geq 0,\label{eq:Dirichlet_H}\\
   h(0,j)&=0,\qquad \forall j\geq 0.\label{eq:Dirichlet_V}
\end{align}
The idea is then to express $h(i,j)$ as a linear combination of products $\alpha^i\beta^j$, for pairs $(\alpha,\beta)$ such that the recursion relations \eqref{eq:harmonicity_relation_interior} in the interior of the quarter plane hold. This is equivalent to choosing the parameters such that $K(\alpha,\beta)=0$, with $K$ as in \eqref{eq:kernel}. The products have to be chosen such that the recursion relations on the boundaries \eqref{eq:Dirichlet_H} and \eqref{eq:Dirichlet_V} are satisfied as well. As it turns out, this can be done by alternatingly compensating for the errors on the two boundaries, which eventually leads to an infinite series of product forms. The typical outcome of the compensation approach is an expression of the form \eqref{eq:f(i,j)_Ansatz}. This clearly formally contains the series presented in \eqref{eq:expression_escape_Fibonacci}.

As a side note, our work proposes a new example of applicability of the compensation approach, in relation with potential theory and discrete harmonic functions.

\subsection*{Related literature on explicit formulas for escape probabilities}
In the quarter plane, the few known formulas concern non-singular walks with certain finite reflection groups (for which an algebraic version of the reflection principle applies); see Figure~\ref{fig:step_sets_non-sing} for three examples. For instance, for the simple random walk (leftmost display on the figure), the escape probability equals \cite[Cor.~8]{KuRa-11}
\begin{equation*}
    \mathbb P(\tau_{(i,j)} = \infty) = \left(1-\Bigl(\frac{p_{-1,0}}{p_{1,0}}\Bigr)^i\right)\left(1 -\Bigl(\frac{p_{0,-1}}{p_{0,1}}\Bigr)^j\right),
\end{equation*}
which interestingly corresponds to a finite (four terms) sum in \eqref{eq:f(i,j)_Ansatz}. Similar expressions hold for the other two models on Figure~\ref{fig:step_sets_non-sing}, with a sum involving six and eight terms, respectively.
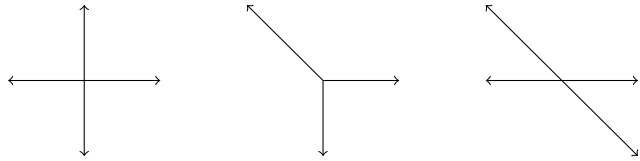
\begin{figure}
\begin{center}
\begin{tikzpicture}[scale=1.0] 
    \draw[->,white] (1,2) -- (-1,-2);
    \draw[->,white] (1,-2) -- (-1,1);
    \draw[->] (0,0) -- (1,0);
    \draw[->] (0,0) -- (-1,0);
    \draw[->] (0,0) -- (0,1);
    \draw[->] (0,0) -- (0,-1);
  \end{tikzpicture}
\begin{tikzpicture}[scale=1.0] 
    \draw[->,white] (2,2) -- (-2,-2);
    \draw[->,white] (2,-2) -- (-2,2);
    \draw[->] (0,0) -- (-1,1);
    \draw[->] (0,0) -- (1,0);
    \draw[->] (0,0) -- (0,-1);
  \end{tikzpicture}
     \begin{tikzpicture}[scale=1.0] 
    \draw[->,white] (1,2) -- (-1,-2);
    \draw[->,white] (1,-2) -- (-1,1);
    \draw[->] (0,0) -- (1,0);
    \draw[->] (0,0) -- (-1,0);
    \draw[->] (0,0) -- (-1,1);
    \draw[->] (0,0) -- (1,-1);
  \end{tikzpicture}
  \end{center}
  \vspace{-5mm}
  \caption{Three non-singular step sets with finite reflection groups. The rightmost example is assumed to satisfy $p_{-1,1}p_{1,-1}=p_{-1,0}p_{1,0}$.}
    \label{fig:step_sets_non-sing}
\end{figure}

Still in the non-singular case, other approaches based on complex analysis techniques (boundary value problems) allow to compute the generating functions
\begin{equation*}
    \sum_{i,j\geq 1} h(i,j) x^{i}y^{j}
\end{equation*}
of harmonic functions in terms of certain conformal mappings, see \cite{HoRaTa-22} (zero drift case) and \cite{LeRa-16} (non-zero drift case). 

In the continuous setting (Brownian motion in cones), there is a unified formula \cite[Thm~C]{GaRa-14} for the probability of escape from arbitrary cones $C\subset\mathbb R^d$, $d\geq 1$. More precisely, if the drift $\vec d$ is interior to the cone, then
\begin{equation*}
    \mathbb P(\tau_x = \infty) = (2\pi)^{d/2} \exp\bigl(\bigl\vert x-\vec d\bigr\vert^2\bigr) p^C\bigl(1,x,\vec d\bigr),
\end{equation*}
where $p^C$ denotes the heat kernel of the cone. The question of finding explicit expressions is then reduced to computing in closed-form the heat kernel, which is more classical. For example, if the cone is a Weyl chamber of type $A$, the determinantal Karlin-McGregor formula holds, and the survival probability is a finite sum of product forms, as in \eqref{eq:f(i,j)_Ansatz}. 

\subsection*{Applications and refinements of our results}
We conclude the introduction by mentioning related open questions and potential applications of our results. 
\begin{itemize}
    \item One advantage of the compensation approach is that it is not based on generating functions. However, it would be interesting to understand how the compensation approach is related to Cohen and Boxma techniques \cite{CoBo-83,Co-92}, which we developed in the paper \cite{HoRaTa-22} to study harmonic functions for non-singular random walks.\smallskip
   \item In principle, the compensation approach could be applied as well to harmonic functions for reflected random walks in the quarter plane (with Neumann boundary conditions). It should also provide explicit expressions for $t$-harmonic functions (Dirichlet or Neumann boundary conditions).\smallskip
   \item Formulas of the type of \eqref{eq:f(i,j)_Ansatz} hold for singular random walks (our main result) and for a few other (finite group) models, as those represented on Figure~\ref{fig:step_sets_non-sing}. This clearly suggests the question of determining the class of models for which harmonic functions may be expressed as in \eqref{eq:f(i,j)_Ansatz}, via the pattern:\smallskip
   \item[]
       $\text{model} \to \text{curve} \to \text{sequence of numbers} \to \text{harmonic function (via Ansatz \eqref{eq:f(i,j)_Ansatz})}.$ \smallskip
   \item Is it possible to study more generally the Martin boundary and the Green functions asymptotics of singular random walks in arbitrary cones? What is the exact applicability of the compensation approach? For instance, can it be applied to harmonic functions of higher dimensional models?
   \item The compensation formula \eqref{eq: h(i,j)} that we find in the context of harmonic functions for singular random walks in the quarter plane is reminiscent of the alternating formula for the harmonic function of a space-time Brownian motion $(t,B_t)_{t\geq 0}$ conditioned to stay in the cone $\{(t,y),0< y<t\}$, see \cite{Def-15}. This suggests a relation between the singular walks we are considering and stochastic processes conditioned to stay in affine Weyl chambers.
\end{itemize}

\part{Constructing harmonic functions via the compensation approach}
\label{part:construction}

\section{Properties of the Laplace transform and convergence of the series}

This section aims at constructing harmonic functions as series taking the form of \eqref{eq:f(i,j)_Ansatz}, by using the compensation approach. We first introduce a curve $\mathcal{K}\in\mathbb{R}^2$ such that $\alpha^i\beta^j$ is a solution of \eqref{eq:harmonicity_relation_interior} for any $(\alpha,\beta)\in\mathcal{K}$, see Section~\ref{subsec:level_sets}. In Section~\ref{subsec:construction}, we then construct sequences $(\alpha_n,\beta_n)_{n\geq0}\subset\mathcal{K}$ such that their series satisfies the boundary conditions \eqref{eq:Dirichlet_H}--\eqref{eq:Dirichlet_V}. We then prove that these series converge and do not depend on the initial starting point of the sequence, see Section~\ref{subsec:starting}. Finally, in Section~\ref{subsec:boundary}, we study a particular case, where we should renormalize the harmonic function to get a non-zero quantity.

\subsection{Level sets of the Laplace transform}
\label{subsec:level_sets}

We first observe that the product form $\alpha^i\beta^j$ is solution to \eqref{eq:harmonicity_relation_interior} if and only if
\begin{equation}
    \alpha\beta = \sum_{k,\ell} p_{k,\ell}\alpha^{k+1}\beta^{\ell+1},
\end{equation}
or equivalently, using the kernel notation \eqref{eq:kernel}, if and only if $K(\alpha,\beta)=0$. We therefore introduce the algebraic curve
\begin{equation}
\label{eq:curve_K}
    \mathcal{K}:=\{(\alpha,\beta)\in\mathbb{R}_{\geq 0}^2:K(\alpha,\beta)=0\}.
\end{equation}
See Figure~\ref{fig:mathcal_K_non-convex} for an example.

To take advantage of some convexity properties, we mainly investigate $\mathcal{K}$ through an alternative exponential scaling as follows:
\begin{equation}
\label{eq:Ney-Spitzer_curve}
	\mathcal{G}:=\{ (x,y)\in\mathbb{R}^2: K(e^x,e^y)=0 \}.
\end{equation}
See Figures~\ref{fig:mathcal_K_non-convex} and \ref{fig:MB_and_escargot} for examples of curves $\mathcal G$. Let us further denote
\begin{equation}
\label{eq:def_G+-}
	\mathcal{G}^+:= \{ (x,y)\in\mathbb{R}^2: K(e^x,e^y)<0 \} \quad\text{and}\quad
	\mathcal{G}^-:= \{ (x,y)\in\mathbb{R}^2: K(e^x,e^y)>0 \}.
\end{equation}
The following lemma presents some crucial properties of the curve $\mathcal{G}$.
\begin{lem}\label{lem: properties of mathcal G}
	Under Assumptions~\ref{it:norm}--\ref{it:Laplace}, we have the following assertions:
	\begin{enumerate}
		\item\label{item: mathcal G^+ convex} $\mathcal{G}^+$ is an unbounded convex domain and includes the ray $\{t(1,1):t<0\}$;
		\item\label{item:(0,0)_in_mathcal_G_and_its_tails} $\mathcal{G}$ passes through $(0,0)$ and the two tails of $\mathcal{G}$ lie in the third quadrant $\mathbb{R}_-^2$;
		\item\label{item: tangent at (0,0)} $\mathcal{G}$ admits a tangent at $(0,0)$ satisfying the equation
		\begin{equation*}
			\left(\sum_{k,\ell}kp_{k,\ell}\right)x + 	\left(\sum_{k,\ell}\ell p_{k,\ell}\right)y = 0.
		\end{equation*}
	\end{enumerate}
\end{lem}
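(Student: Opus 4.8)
The plan is to analyze the kernel $K(e^x,e^y)$ as a function on $\mathbb{R}^2$ and exploit log-convexity of the Laplace transform. Write $L(x,y):=\sum_{i,j\geq-1}p_{i,j}e^{ix+jy}$, so that $K(e^x,e^y) = e^{x+y}(L(x,y)-1)$ and hence $\mathcal{G} = \{(x,y):L(x,y)=1\}$, $\mathcal{G}^+ = \{L<1\}$ and $\mathcal{G}^- = \{L>1\}$. The function $L$ is convex on $\mathbb{R}^2$ (as a sum with positive coefficients of the convex exponentials $(x,y)\mapsto e^{ix+jy}$), finite near $\mathcal{G}$ by Assumption~\ref{it:Laplace}; moreover it is strictly convex in any direction along which at least two of the exponents $(i,j)$ differ, which holds here since $p_{-1,1}p_{1,-1}\neq0$ gives two non-collinear exponent vectors. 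Consequently $\mathcal{G}^+=\{L<1\}$ is a convex set, and it is open. For unboundedness and the claim about the diagonal ray: on the ray $t(1,1)$ with $t<0$ one has $L(t,t) = \sum p_{i,j}e^{(i+j)t}$; since all steps satisfy $i+j\geq0$ and there is at least one step with $i+j>0$ (non-degeneracy), the term with $i+j=0$ contributes a constant $\leq p_{-1,1}+p_{1,-1}<1$ while the positive-$i+j$ terms vanish as $t\to-\infty$, so $L(t,t)\to p_{-1,1}+p_{1,-1}+\cdots<1$ for $t$ sufficiently negative; more carefully, $L(t,t)$ is nondecreasing in $t$ and $L(0,0)=1$, so $L(t,t)<1$ for all $t<0$, giving the whole ray and unboundedness of $\mathcal{G}^+$. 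This proves \eqref{item: mathcal G^+ convex}.

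For \eqref{item:(0,0)_in_mathcal_G_and_its_tails}: $(0,0)\in\mathcal{G}$ is immediate from the normalization $L(0,0)=\sum p_{i,j}=1$. For the tails: parametrize $\mathcal{G}$ near infinity. Since $\mathcal{G}=\partial\mathcal{G}^+$ and $\mathcal{G}^+$ is convex and unbounded containing the negative diagonal ray, $\mathcal{G}$ has two unbounded branches. I would show each branch eventually enters $\mathbb{R}_-^2$ by ruling out the other three quadrants: if $x\geq0$ and $y\geq0$ are both large, then because some step has $i+j>0$ (and all have $i,j\geq-1$), the corresponding term $p_{i,j}e^{ix+jy}$ dominates and $L(x,y)\to\infty$, so such points lie in $\mathcal{G}^-$, not on $\mathcal{G}$; similarly, if $x\to+\infty$ with $y$ bounded, the step $(1,-1)$ (present since $p_{1,-1}\neq0$) combined with any step of larger first coordinate, or simply noting $\mathcal{G}^+$ convex forces $\mathcal{G}$ to be a graph over a bounded $x$-interval unless it descends — one pins down that along $\mathcal{G}$, as $\Vert(x,y)\Vert\to\infty$, both coordinates must go to $-\infty$, so the two tails lie in the open third quadrant. (Quantitatively: on $\mathcal{G}$ one has $L(x,y)=1$, and the presence of $p_{1,-1}e^{x-y}$ plus $p_{-1,1}e^{-x+y}$ plus a term with positive exponent sum forces a compact ``body''; the standard argument here is that $\{L\leq 1\}$ is a convex compact-modulo-recession-cone set whose recession cone is exactly the negative diagonal ray, because $i+j\geq0$ for all steps with equality only for $(1,-1),(-1,1)$, whence the only directions of recession are $(-1,-1)$.)

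For \eqref{item: tangent at (0,0)}: since $L$ is smooth near $(0,0)$ (Assumption~\ref{it:Laplace} gives an exponential-moment neighborhood of a point of $\mathcal{G}$, hence of $(0,0)$), the implicit function theorem applies provided $\nabla L(0,0)\neq 0$. We compute $\partial_x L(0,0) = \sum_{k,\ell}k\,p_{k,\ell}$ and $\partial_y L(0,0)=\sum_{k,\ell}\ell\,p_{k,\ell}$, the coordinates of the drift. If the drift is non-zero, $\mathcal{G}$ is a smooth curve near $(0,0)$ with tangent line $\langle\nabla L(0,0),(x,y)\rangle=0$, which is exactly the stated equation. If the drift is zero both coefficients vanish and the equation $0=0$ is vacuous, so the statement is to be read as: the tangent direction (when it exists) is orthogonal to the drift; alternatively, in the zero-drift case the tangent at the (double) point is still well-defined by convexity of $\mathcal{G}^+$ and the second-order Taylor expansion, and its slope is recovered as a limit — but I would simply note that the drift being non-zero is the generic and relevant case and the formula holds verbatim there.

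The main obstacle is \eqref{item:(0,0)_in_mathcal_G_and_its_tails}, specifically the rigorous control of the tails: one must show not merely that $\mathcal{G}$ is unbounded but that \emph{both} unbounded ends lie eventually in $\mathbb{R}_-^2$, and this requires carefully translating the step-set hypotheses ($i+j\geq0$ with strict inequality somewhere, $i,j\geq-1$, $p_{-1,1}p_{1,-1}\neq0$) into a statement about the recession cone of the convex region $\mathcal{G}^+=\{L<1\}$. The cleanest route is: the recession cone of $\{L\leq 1\}$ equals $\{v:\langle\text{exponents},v\rangle\leq0\text{ for all steps}\}=\{v=(v_1,v_2):iv_1+jv_2\leq0\ \forall (i,j)\text{ with }p_{i,j}>0\}$; using $i+j\geq0$ for all steps and $i+j>0$ for at least one, together with the extreme steps $(1,-1)$ and $(-1,1)$ which force $v_1\leq v_2$ and $v_1\geq v_2$ i.e. $v_1=v_2$, and then the positive-sum step forcing $v_1+v_2\leq 0$, the recession cone collapses to exactly $\mathbb{R}_{\leq0}(1,1)$. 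A convex set whose recession cone is a single ray has all its unbounded parts asymptotic to that ray, which places both tails of its boundary $\mathcal{G}$ in the third quadrant.
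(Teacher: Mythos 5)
Your proof is correct, and parts \ref{item: mathcal G^+ convex} and \ref{item: tangent at (0,0)} follow essentially the same lines as the paper: convexity of the Laplace transform $L(x,y)=\sum p_{i,j}e^{ix+jy}$ gives convexity of $\mathcal{G}^+$, the diagonal ray comes from $i+j\geq 0$ for all steps, and the tangent comes from the gradient of $L$ at the origin. Where you genuinely diverge is part \ref{item:(0,0)_in_mathcal_G_and_its_tails}: the paper disposes of the tails with two one-line sign checks, namely $L(0,y)>1$ for $y\ll 0$ (driven by the term $p_{1,-1}e^{-y}$) and $L(x,0)>1$ for $x\ll 0$ (driven by $p_{-1,1}e^{-x}$), so the far parts of the negative half-axes lie in $\mathcal{G}^-$ and the unbounded part of the convex set $\mathcal{G}^+$ is pinched into the third quadrant. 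You instead compute the recession cone of $\{L\leq 1\}$ and show it collapses to $\mathbb{R}_{\leq 0}(1,1)$, using exactly the steps $(1,-1)$, $(-1,1)$ and one step with $i+j>0$; this is heavier machinery but buys a sharper conclusion (both tails are \emph{asymptotic to the diagonal direction} $-(1,1)$, not merely contained in $\mathbb{R}_-^2$), and it is robust to the fact that $L$ need only be finite near $\mathcal{G}$, since the directional monotonicity $L(z+tv)\leq L(z)$ for $v$ in the candidate cone holds regardless. One small cleanup: your hedge about the zero-drift case in \ref{item: tangent at (0,0)} is unnecessary, because the non-degeneracy assumption together with $i+j\geq 0$ for all steps forces $\sum_{k,\ell}(k+\ell)p_{k,\ell}>0$, so $\nabla L(0,0)\neq 0$ always and the implicit function theorem applies verbatim.
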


\begin{proof}
	We first prove~\ref{item: mathcal G^+ convex}. $K(e^x,e^y)$ can be factorized as
	\begin{equation*}
		K(e^x,e^y) = e^{x+y}G(x,y),
	\end{equation*}
	where $G(x,y)$ is defined as
	\begin{equation}\label{eq: def G(x,y)}
	    G(x,y):= \sum_{k,\ell} p_{k,\ell} e^{kx+\ell y}-1.
	\end{equation}
	Since $\sum_{k,\ell} p_{k,\ell} e^{kx+\ell y}$ is a moment-generating function of a random variable in $\mathbb{Z}^2$, it is convex in $\mathbb{R}^2$. The domain $\mathcal{G}^+$ is thus convex. By letting $x=y<0$, we then have:
	\begin{equation*}
		G(x,x) =  \sum_{k,\ell} p_{k,\ell} e^{(k+\ell)x }-1 <\sum_{k,\ell} p_{k,\ell}  -1 =0,
	\end{equation*}
	since $p_{k,\ell}=0$ if $k+\ell<0$. Thus, $\mathcal{G}^+$ is unbounded and includes the ray $\{t(1,1):t<0\}$.
	
	We now prove~\ref{item:(0,0)_in_mathcal_G_and_its_tails}. Since $G(0,0)=0$, then $(0,0)\in\mathcal{G}$. By letting $x=0>y$ with $\vert y\vert$ large enough, we have
	\begin{equation*}
		G(0,y)=  \sum_{k,\ell} p_{k,\ell} e^{\ell y}-1 > p_{1,-1}e^{-y} >0.
	\end{equation*}
	Similarly, for all $y=0>x$ with $\vert x\vert $ large enough, we have
	\begin{equation*}
		G(x,0)=  \sum_{k,\ell} p_{k,\ell} e^{kx}-1 > p_{-1,1}e^{-x} >0.
	\end{equation*}
	It implies that both tails of $\mathcal{G}$ lie in $\mathbb{R}_-^2$.
	
	Finally, \ref{item: tangent at (0,0)} is easily seen from the equation of the tangent at $(0,0)$, which is
	\begin{equation*}
		\partial_xK(e^0,e^0)x + \partial_yK(e^0,e^0)y = 0.\qedhere
	\end{equation*}
\end{proof}

Although $\mathcal{G}^+$ is always convex, the bounded domain delimited by $\mathcal K$ is not necessarily convex (see Figure~\ref{fig:mathcal_K_non-convex}). This is the main reason for us to use the curve $\mathcal{G}$ rather than $\mathcal{K}$. 

\begin{figure}[t]
    \centering
    \includegraphics[scale=0.25]{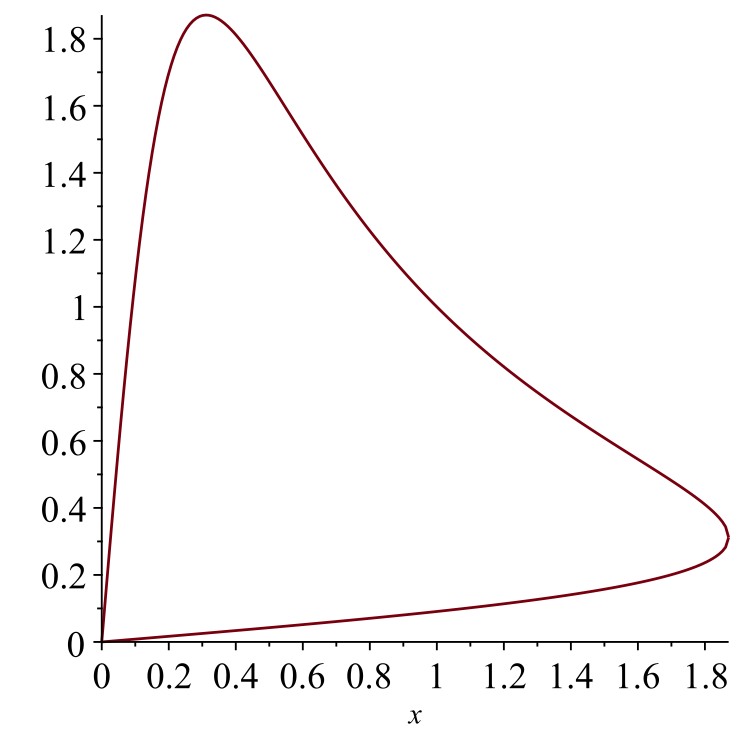}
    \includegraphics[scale=0.25]{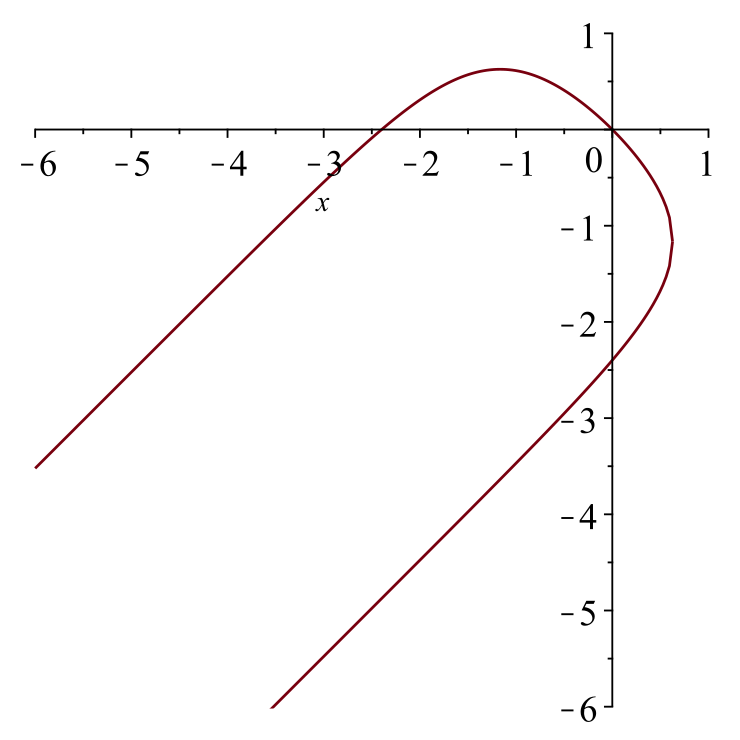}
    \caption{The model $p_{1,1}=5/6$, $p_{1,-1}=p_{-1,1}=1/12$: the interior of $\mathcal{K}$ (left) is not convex, while the interior of $\mathcal{G}$ (right) is convex.}
    \label{fig:mathcal_K_non-convex}
\end{figure}

We now parametrize the curve $\mathcal{G}$ by functions. Define $f:\mathbb{R}_{\leq 0}\to\mathbb{R}$ such that for all $x\leq0$, $K(e^x,e^{f(x)})=0$ and $f(x)\geq x$. Similarly, we define $g:\mathbb{R}_{\leq 0}\to\mathbb{R}$ such that $K(e^{g(y)},e^y)=0$ and $g(y)\geq y$ for all $y\leq 0$. The following lemma describes key properties of $f$ and $g$.

\begin{lem}
\label{lem:definition_fg}
	Under Assumptions~\ref{it:norm}--\ref{it:Laplace}, we have the following assertions:
	\begin{enumerate}
		\item\label{item: f, g well defined, concave} $f$ and $g$ are well defined, concave, and infinitely differentiable on $\mathbb{R}_-$;
		\item\label{item: max f(x) and f(-infty)} $\max_{x\leq 0}f(x)>f(0)=0$ and $\arg \max_{x\leq 0} f(x)$ includes a unique point, denoted by $x_0$. Further, $f(x)$ is strictly increasing on $(-\infty,x_0)$, strictly decreasing on $(x_0,0)$, and $\lim_{x\to-\infty}f(x) = -\infty$;
		\item\label{item: max g(y) and g(-infty)} $\max_{y\leq 0}g(y)> g(0)=0$ and $\arg \max_{y\leq 0} g(y)$ includes a unique point, denoted by $y_0$. Further, $g(y)$ is strictly increasing on $(-\infty,y_0)$, strictly decreasing on $(y_0,0)$, and $\lim_{y\to-\infty} g(y) = -\infty$.
	\end{enumerate}
\end{lem}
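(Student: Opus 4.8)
The plan is to read off all the assertions from the convexity and analyticity of $G$ in \eqref{eq: def G(x,y)}, combined with the three properties of $\mathcal G$ already proved in Lemma~\ref{lem: properties of mathcal G}; by symmetry under $x\leftrightarrow y$ it suffices to treat $f$. Set $d_x:=\sum_{k,\ell}k\,p_{k,\ell}$ and $d_y:=\sum_{k,\ell}\ell\,p_{k,\ell}$, so $d_x+d_y>0$ by non-degeneracy, and note that $G$ is real-analytic and strictly convex on $\mathbb R^2$: its Hessian at a point is $\sum_{k,\ell}p_{k,\ell}e^{kx+\ell y}\,(k,\ell)^{\top}(k,\ell)$, which is positive definite because the exponents $(k,\ell)$ with $p_{k,\ell}>0$ are not collinear ($p_{-1,1}p_{1,-1}\ne0$ supplies $(1,-1)$, and non-degeneracy a vector with $i+j>0$). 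For item~\ref{item: f, g well defined, concave}, fix $x\le0$ and put $\varphi_x(y):=G(x,y)$, a strictly convex real-analytic function of $y$ with $\varphi_x(y)\to+\infty$ as $y\to\pm\infty$ (the coefficients of $e^{-y}$ and $e^{y}$ in $\varphi_x$ are $\ge p_{1,-1}e^{x}>0$ and $\ge p_{-1,1}e^{-x}>0$). For $x<0$, Lemma~\ref{lem: properties of mathcal G}\ref{item: mathcal G^+ convex} gives $\varphi_x(x)=G(x,x)<0$, so $\varphi_x$ has exactly two simple zeros $y_-(x)<x<y_+(x)$; define $f(x):=y_+(x)$, the unique zero that is $\ge x$, and $f(0):=0$. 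Since $\partial_yG(x,f(x))>0$, the implicit function theorem makes $f$ real-analytic, hence $C^\infty$, on $(-\infty,0)$.

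For concavity: for $x<0$ the vertical slice $\mathcal G^+\cap(\{x\}\times\mathbb R)$ is the open interval $(y_-(x),y_+(x))$, so $f(x)=\sup\{y:(x,y)\in\mathcal G^+\}$ is the upper envelope of the convex set $\mathcal G^+$, hence concave on $\mathbb R_-$; it is in fact strictly concave, since an affine piece of its graph would be a segment contained in $\{G=0\}$, impossible by strict convexity of $G$. For the tails in items~\ref{item: max f(x) and f(-infty)}--\ref{item: max g(y) and g(-infty)}: at any zero $y$ of $\varphi_x$ one has $0=G(x,y)\ge p_{-1,1}e^{-x+y}-1$, hence $f(x)=y_+(x)\le x+\log(1/p_{-1,1})\to-\infty$ as $x\to-\infty$.

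The remaining, delicate point is locating the maximum, where the tangent of Lemma~\ref{lem: properties of mathcal G}\ref{item: tangent at (0,0)} enters. In the main case $d_x,d_y>0$ (drift pointing into the quadrant; the degenerate case $d_xd_y=0$ is exactly the one needing a renormalization, see Section~\ref{subsec:boundary}), $\varphi_0$ is strictly increasing on $[0,\infty)$, so $0$ is its only zero with $y\ge0$; thus $f$ is continuous at $0$ with $f(0)=0$, and the implicit function theorem yields $f'(0^-)=-\partial_xG(0,0)/\partial_yG(0,0)=-d_x/d_y<0$. Therefore $f(x)>0=f(0)$ just below $0$, so $\max_{x\le0}f>f(0)=0$; being strictly concave and continuous on $(-\infty,0]$ with $f\to-\infty$ at $-\infty$ and $f'(0^-)<0$, $f$ attains its maximum at a single interior point $x_0\in(-\infty,0)$ and is strictly increasing on $(-\infty,x_0)$ and strictly decreasing on $(x_0,0)$. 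The statements for $g$ (and for $y_0$) follow after swapping $x\leftrightarrow y$ and $d_x\leftrightarrow d_y$.

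The main obstacle is precisely this endpoint analysis at $x=0$: well-definedness, smoothness, concavity and the tail behaviour are direct consequences of the convexity and analyticity of $G$, whereas identifying the value $f(0)$, the one-sided derivative $f'(0^-)$ and the strict inequality $\max f>f(0)$ genuinely relies on the tangent of Lemma~\ref{lem: properties of mathcal G}\ref{item: tangent at (0,0)} and on the sign of the drift coordinates — which is presumably why the degenerate drift case is isolated as a separate discussion.
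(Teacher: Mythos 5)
Your proof is correct and follows essentially the same route as the paper's: all assertions are extracted from the (strict) convexity of $G$ in \eqref{eq: def G(x,y)} together with Lemma~\ref{lem: properties of mathcal G}, using the implicit function theorem for smoothness, the tangent slope $-d_x/d_y$ at the origin to get $\max f>f(0)$, and concavity to get uniqueness of the argmax, the monotonicity on either side of $x_0$, and the divergence at $-\infty$. Two of your sub-arguments are in fact tighter than the paper's: you justify the implicit function theorem by observing that $\partial_y G(x,f(x))>0$ because $f(x)$ is the \emph{larger} zero of the strictly convex slice $y\mapsto G(x,y)$ (the paper only records that $\nabla G$ does not vanish on $\mathcal G$, which by itself does not single out the needed partial derivative), and your tail estimate $f(x)\le x+\log(1/p_{-1,1})$, read off from the single term $p_{-1,1}e^{-x+y}\le 1$, is more direct than the paper's secant-line argument through $(x_1,0)$ and $(x_0,f(x_0))$. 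Finally, you are right to isolate the hypothesis $d_x,d_y>0$: the step $f'(0^-)=-d_x/d_y<0$, which the paper asserts without comment, genuinely requires both drift coordinates to be positive, and Assumptions~\ref{it:norm}--\ref{it:Laplace} do not force this (e.g.\ $p_{-1,1}=9/10$, $p_{1,-1}=p_{1,1}=1/20$ gives $d_x<0<d_y$, in which case $f$ is increasing up to $0$, $x_0=0$ and $\max_{x\le0}f=f(0)$); so your explicit restriction matches an assumption the paper is using silently, even if your attribution of the case $d_xd_y=0$ to the renormalization of Section~\ref{subsec:boundary} is only loosely accurate (that section concerns the limit of $(a_0,b_0)$ to $\partial\mathcal G_0$ for a fixed walk, not a degenerate drift).
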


\begin{proof}
    We first prove Item~\ref{item: f, g well defined, concave}. By the convexity of $\mathcal{G}^+$, the slope of $\mathcal{G}$ at $(0,0)$, and the tails' position of $\mathcal{G}$ (see Lemma~\ref{lem: properties of mathcal G}), it is easily seen that $f$ and $g$ are well defined and concave on $\mathbb{R}_{\leq 0}$. Since any point $(x,y)\in\mathcal{G}$ is not the minimiser of the convex function $G$ defined in \eqref{eq: def G(x,y)}, then $\partial_xG$ and $\partial_yG$ cannot vanish simultaneously for any $x,y\in\mathcal{G}$. By the construction of $f$ and $g$, we then have
    \begin{equation*}
        \partial_x G(x,f(x)) \not= 0 \quad\text{and}\quad
        \partial_y G(g(y),y) \not= 0
    \end{equation*}
    for all $x,y\leq 0$. Thus, 
    \begin{equation*}
        f' (x) = -\frac{\partial_xG(x,f(x))}{\partial_yG(x,f(x))}\quad  \text{and}\quad g' (y) = -\frac{\partial_yG(g(y),y)}{\partial_xG(g(y),y)}
    \end{equation*}
    are well defined for all $x,y<0$, and so are all the higher orders derivatives of $f$ and $g$.
	
	We now prove Item~\ref{item: max f(x) and f(-infty)}. Since
	\begin{equation*}
		f'(0) = -\frac{\partial_xK}{\partial_yK} (e^0,e^0) = \-\frac{\sum_{k,\ell}kp_{k,\ell}}{\sum_{k,\ell}\ell p_{k,\ell}} <0,
	\end{equation*}
	then $\max_{x\leq 0}f(x)>f(0)=0$. If $\arg \max_{x\leq 0} f(x)$ includes at least two distinct points, then it also contains the segment between these points by the concavity of $f$. This implies that $K(\exp(x),\exp(\max_{x\leq 0}f(x)))$ is a constant function of $x$, which does not hold true. Hence, $\arg \max_{x\leq 0} f(x)$ includes a unique point $x_0$. The strict monotonicity of $f$ on $(\infty,x_0)$ and $(x_0,0)$ then follows. By letting $x_1<x_0$ such that $f(x_1)=0$, we have for all $x<x_1$,
	\begin{equation*}
		f(x) < \frac{f(x_0)}{x_0-x_1}(x-x_1)\overset{x\to -\infty}{\longrightarrow}-\infty.
	\end{equation*}
	Thus, $f(x)\to -\infty$ as $x\to - \infty$.
	
	Item~\ref{item: max g(y) and g(-infty)} is proven similarly to \ref{item: max f(x) and f(-infty)}. The proof is then complete.
\end{proof}

\subsection{Construction of the product forms}
\label{subsec:construction}

Our main objective here is to prove Lemma~\ref{lem:harmonic_expression} below, i.e., to construct sequences  $\{(a_n,b_n)\}_{n\in\mathbb{Z}}\subset \mathcal G$ such that, setting
\begin{equation}
\label{eq:def_a_alpha_b_beta}
    (\alpha_n,\beta_n):= \left(e^{a_n},e^{b_n}\right)\in\mathcal{K},
\end{equation}
the associated series 
\begin{equation}\label{eq: h(i,j)}
        h(i,j):=\sum_{n\in\mathbb{Z}}\left(\alpha_n^i\beta_n^j - \alpha_{n+1}^i\beta_n^j\right)
    \end{equation}
is well defined, positive and satisfies \eqref{eq:harmonicity_relation_interior}--\eqref{eq:Dirichlet_V}.

To that purpose, we introduce a few useful notations. We first define
\begin{equation}
\label{eq:def_G_0}
	\mathcal{G}_0:=\{(x,f(x)):x\in(x_0,0]\}\cup \{(g(y),y):y\in(y_0,0]\}\subset\mathcal{G},
\end{equation}
which corresponds to the yellow part on Figure~\ref{fig:MB_and_escargot}. We first set
\begin{equation*}
   \widehat{f}:=\bigl(f_{\vert(-\infty,x_0]}\bigr)^{-1}\qquad \text{and}\qquad \widetilde{f}:=\bigl(f_{\vert(0,x_0]}\bigr)^{-1}.
\end{equation*}
Since $f$ is concave, strictly increasing on $(-\infty,x_0]$, then $\widehat{f}$ is a well-defined function, convex and strictly increasing on $(-\infty,f(x_0)]$. Similarly, the function $\widehat{g}:=(g_{\vert(-\infty,y_0]})^{-1}$ is convex, strictly increasing on $(-\infty,g(y_0)]$.

We construct a sequence $\{(a_n,b_n)\}_{n\in\mathbb{Z}}$ contained in $\mathcal{G}$ as follows:
    \begin{align}
     (a_0,b_0) &\in \mathcal{G}_0,\label{eq:recursion_0_n}\\
     (a_n,b_n)&:= \left(\widehat{f}\circ(\widehat{g}\circ\widehat{f})^{\circ(n-1)}(b_0),(\widehat{g}\circ\widehat{f})^{\circ(n)}(b_0)\right),& n\geq 1,\label{eq:recursion_positive_n}\\
     (a_{-n},b_{-n})&:= \left( (\widehat{f}\circ\widehat{g})^{\circ(n)}(a_0), \widehat{g}\circ(\widehat{f}\circ\widehat{g})^{\circ(n-1)}(a_0) \right),& n\geq 1.\label{eq:recursion_negative_n}
\end{align}


\begin{figure}
\begin{center}
\includegraphics[width=0.36\textwidth]{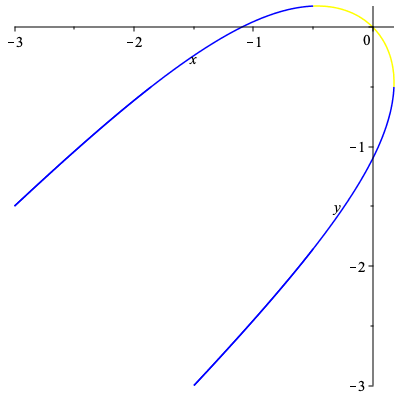}\qquad
\includegraphics[width=0.36\textwidth]{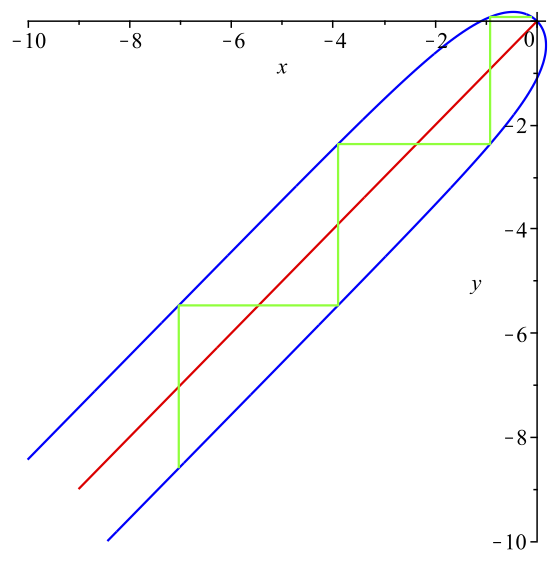}
\end{center}
\caption{Left: an example of curve $\mathcal G$ (blue color) and the subpart (yellow) $\mathcal G_0$ corresponding to the Martin boundary. Right: an example of the compensation approach procedure.}
\label{fig:MB_and_escargot}
\end{figure}

\begin{lem}\label{lem:harmonic_expression}
    Let $\{(a_n,b_n)\}_{n\in\mathbb Z}$ be any sequence defined by  \eqref{eq:recursion_0_n}, \eqref{eq:recursion_positive_n} and \eqref{eq:recursion_negative_n}, and set $(\alpha_n,\beta_n)=(e^{a_n},e^{b_n})$ as in \eqref{eq:def_a_alpha_b_beta}. Then the series given by \eqref{eq: h(i,j)} converges absolutely for any $i,j\geq 1$ and satisfies Eq.~\eqref{eq:harmonicity_relation_interior}--\eqref{eq:Dirichlet_V}. Furthermore, $h(i,j)\geq 0$ for any $i,j\geq 1$.
\end{lem}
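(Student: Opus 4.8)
The plan is to verify the three claimed properties of the series \eqref{eq: h(i,j)} in the order: absolute convergence, harmonicity plus boundary conditions, and positivity. The key structural fact driving everything is that the sequence $\{(a_n,b_n)\}_{n\in\mathbb Z}$ is obtained by alternately applying $\widehat f$ and $\widehat g$, which are the inverse branches of $f$ and $g$ restricted to the \emph{increasing} parts of the curve $\mathcal G$ (on $(-\infty,x_0]$ and $(-\infty,y_0]$). First I would establish the qualitative shape of the trajectory: because $\widehat f,\widehat g$ map onto the ``left tails'' of $\mathcal G$, which by Lemma~\ref{lem: properties of mathcal G}\ref{item:(0,0)_in_mathcal_G_and_its_tails} lie in $\mathbb R_-^2$ and by Lemma~\ref{lem:definition_fg} run off to $(-\infty,-\infty)$, the iterates satisfy $a_n\to-\infty$ and $b_n\to -\infty$ as $n\to+\infty$ (with $b_n$ also $\to-\infty$), and symmetrically $a_{-n},b_{-n}\to-\infty$ as $n\to+\infty$. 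Equivalently $\alpha_n,\beta_n\in(0,1)$ for $n$ away from $0$ and $\alpha_n\to0$, $\beta_n\to0$ in both directions. I would also record the monotonicity relations built into \eqref{eq:recursion_positive_n}--\eqref{eq:recursion_negative_n}, namely $b_{n+1}=\widehat g\circ\widehat f(b_n)$ so $(a_{n+1},b_n)\in\mathcal G$ as well (the consecutive pairs share a coordinate alternately), which is exactly what makes each summand $\alpha_n^i\beta_n^j-\alpha_{n+1}^i\beta_n^j$ a natural ``telescoping-in-disguise'' block.

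For absolute convergence, I would show that for fixed $i,j\geq 1$ the tails decay geometrically. The pair $(a_n,b_n)$ and $(a_{n+1},b_n)$ lie on $\mathcal G$, so $\alpha_n^i\beta_n^j-\alpha_{n+1}^i\beta_n^j=\beta_n^j\alpha_n^i\bigl(1-(\alpha_{n+1}/\alpha_n)^i\bigr)$; since $\alpha_n,\beta_n\in(0,1)$ eventually and bounded away from $1$ (as $a_n,b_n\le -c<0$ for $|n|$ large), this is $O(\beta_n^j\alpha_n^i)=O(\rho^{|n|})$ for some $\rho<1$, using that $a_n+b_n$ is bounded above by a negative constant along the tails and that one of $a_n,b_n$ decreases by at least a fixed amount per step (the maps $\widehat f,\widehat g$ are convex and their restriction to the relevant interval is a strict contraction toward $-\infty$ in the sense that $\widehat f(t)\le t - \delta$ for $t\le -c$, coming from the slope being $>1$ on the increasing tail). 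The cleanest way to phrase the rate is to note $\widehat g\circ \widehat f$ has a fixed point at $-\infty$ and derivative $<1$ near there, or more elementarily that both $a_n$ and $b_n$ are eventually bounded above by a linear function of $n$ with negative slope. This gives a dominating geometric series, hence absolute convergence for all $i,j\geq 1$, and in fact local uniform convergence in the continuous parameter, which I would note for later differentiation/harmonicity use.

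Next, harmonicity: each term $\alpha_n^i\beta_n^j$ satisfies \eqref{eq:harmonicity_relation_interior} because $(\alpha_n,\beta_n)\in\mathcal K$, i.e.\ $K(\alpha_n,\beta_n)=0$, by the observation in Section~\ref{subsec:level_sets}; absolute convergence lets me apply the harmonicity operator termwise, so $h$ satisfies \eqref{eq:harmonicity_relation_interior} for all $i,j\ge1$. The boundary conditions are the heart of the compensation argument. For \eqref{eq:Dirichlet_V}, set $i=0$: the block is $\beta_n^j-\beta_n^j=0$ for every $n$, so $h(0,j)=0$ trivially term by term --- this is why the summand is written in the asymmetric form $\alpha_n^i\beta_n^j-\alpha_{n+1}^i\beta_n^j$. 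For \eqref{eq:Dirichlet_H}, set $j=0$: then $h(i,0)=\sum_{n\in\mathbb Z}(\alpha_n^i-\alpha_{n+1}^i)$, which telescopes; absolute convergence plus $\alpha_n\to0$ as $n\to+\infty$ and $\alpha_n\to0$ as $n\to-\infty$ (both tails) forces the telescoping sum to be $\lim_{N\to\infty}(\alpha_{-N}^i-\alpha_{N+1}^i)=0-0=0$. Here I would double-check the indexing: the construction pairs $b_{n+1}=(\widehat g\circ\widehat f)(b_n)$, and one must verify that $(a_{n+1},b_n)\in\mathcal G$ for \emph{every} $n\in\mathbb Z$, including the transition across $n=0$ where $(a_0,b_0)\in\mathcal G_0$; this requires checking that $a_1=\widehat f(b_0)$ indeed satisfies $K(e^{a_1},e^{b_0})=0$, which holds since $\widehat f$ is a branch of the inverse of $f$ and $b_0\in(y_0,0]\subset$ domain of $\widehat f$. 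I anticipate the main obstacle is precisely this bookkeeping: confirming that the alternating inverse-branch construction stays on $\mathcal G$ and that the two ``arms'' of the sequence genuinely both run to the corner $(-\infty,-\infty)$ (so nothing gets stuck at $x_0$ or $y_0$ or escapes the domain of definition of $\widehat f,\widehat g$), and that the decay is uniform enough to justify all the termwise manipulations.

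Finally, positivity: I would group the series as $h(i,j)=\sum_{n\in\mathbb Z}\beta_n^j\bigl(\alpha_n^i-\alpha_{n+1}^i\bigr)$ and argue that $\alpha_n>\alpha_{n+1}$ for each $n$, or rather handle the two arms separately. On the arm $n\ge 0$, $a_{n+1}=\widehat f(b_{n+1})$ with $b_{n+1}<b_n$ and $\widehat f$ increasing... so one needs the right comparison: since $a_n$ is strictly decreasing in $n$ on $n\ge 0$ (each application of $\widehat g\circ\widehat f$ pushes further into the tail), $\alpha_n^i-\alpha_{n+1}^i>0$ there; on the arm $n<0$ one similarly checks the sign, possibly by re-indexing so the positive blocks are manifest, or by the alternative grouping $h(i,j)=\sum_n \alpha_{n+1}^i(\beta_{n+1}^j-\beta_n^j)$ using $(a_{n+1},b_{n+1})$ and $(a_{n+1},b_n)$ both on $\mathcal G$ --- pick whichever grouping makes every block nonnegative on the whole index set. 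If neither single grouping works globally, split $\mathbb Z$ at $0$ and use one grouping for $n\ge0$ and the other for $n<0$, noting the $n=0$ boundary term is controlled since $(a_0,b_0)\in\mathcal G_0$ and $\alpha_0,\beta_0\le1$. The monotonicity facts needed are all immediate consequences of Lemma~\ref{lem:definition_fg}\ref{item: f, g well defined, concave} and the fact that $\widehat f,\widehat g$ are strictly increasing convex functions onto the left tails.
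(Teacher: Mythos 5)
Your treatment of convergence, harmonicity and the two boundary conditions is essentially the paper's own argument: the geometric decay of the tails comes from the affine bounds $\widehat{f}(y)\leq y-c_1$ and $\widehat{g}(x)\leq x-c_2$ with $c_1=f(x_0)-x_0>0$, $c_2=g(y_0)-y_0>0$, which follow from the convexity of $\mathcal{G}^+$ and the fact that it contains the ray $\{t(1,1):t<0\}$; the paper implements this via a dominating sequence decreasing linearly in $n$. One caveat on your phrasing of the rate: the heuristic that $\widehat{g}\circ\widehat{f}$ has ``derivative $<1$ near the fixed point at $-\infty$'' is not the right mechanism, since Lemma~\ref{lem: derivatives of some functions} shows $\widehat{f}',\widehat{g}'\to 1$ at $-\infty$; only the uniform affine decrement (which you also state) gives the geometric domination. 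The termwise verification of \eqref{eq:harmonicity_relation_interior}, the identity $\beta_n^j-\beta_n^j=0$ for \eqref{eq:Dirichlet_V} and the telescoping/reindexing for \eqref{eq:Dirichlet_H} match the paper.

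The genuine gap is in the positivity step. No choice of grouping can make \emph{every} block nonnegative for all $i,j\geq1$, because $h(i,j)$ may actually vanish at small $(i,j)$ (see Remark~\ref{rem: positive harmonic func}). Concretely, your split does make the two arms nonnegative, namely $(\alpha_n^i-\alpha_{n+1}^i)\beta_n^j\geq0$ and $\alpha_{-n}^i\bigl(\beta_{-n}^j-\beta_{-(n+1)}^j\bigr)\geq0$ for $n\geq1$, but it leaves the central block
$\alpha_0^i\beta_0^j\bigl(1-(\beta_{-1}/\beta_0)^j-(\alpha_1/\alpha_0)^i\bigr)$,
and this is \emph{not} controlled by ``$\alpha_0,\beta_0\leq1$'': that inequality is itself false in general (for $(a_0,b_0)=(x,f(x))$ with $x\in(x_0,0)$ one has $f(x)>0$, hence $\beta_0>1$), and the block is genuinely nonpositive for small $i+j$ — in the Fibonacci example at $(i,j)=(1,1)$ it equals $1-\tfrac12-\tfrac12=0$, and for other models it can be negative. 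The paper's resolution is a two-stage argument you are missing: since $\beta_{-1}/\beta_0<1$ and $\alpha_1/\alpha_0<1$, the central block (hence $h(i,j)$) is positive once $i+j\geq n_0$ for some $n_0$; nonnegativity for the remaining $i,j\geq1$ is then propagated downward by induction on $i+j$ using the harmonicity relation $h(i,j)=\sum_{k,\ell}p_{k,\ell}h(i+k,j+\ell)$, the fact that all jumps satisfy $k+\ell\geq0$, and the Dirichlet boundary values. You need this extra harmonicity-induction step; a purely term-by-term sign analysis cannot close the argument.
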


\begin{proof}
We start by proving the convergence of the series. To that aim, we construct an auxiliary sequence ${(\widehat{a}_n,\widehat{b}_n)}_{n\in\mathbb{Z}}$ as follows:
    \begin{align*}
         (\widehat{a}_0,\widehat{b}_0) &= (a_0,b_0)\in\mathcal{G}_0,\\
         (\widehat{a}_n,\widehat{b}_n) &= \left( b_0 - n c_1 - (n-1) c_2,b_0 - n c_1 - n c_2\right), &n\geq 1,\\
         (\widehat{a}_{-n},\widehat{b}_{-n}) &= \left( a_0 - n c_1 - n c_2,a_0 - (n-1) c_1 - n c_2\right), &n\geq 1,
\end{align*}
where we have set $c_1 = f(x_0)-x_0>0$ and $c_2 = g(y_0)-y_0>0$, with $x_0,y_0$ as in Lemma~\ref{lem:definition_fg}. Let us also put, for any $n\in\mathbb{Z}$,
    \begin{equation*}
        \bigl( \widehat{\alpha}_n,\widehat{\beta}_n \bigr) = \bigl( e^{\widehat{a}_n},e^{\widehat{b}_n} \bigr).
    \end{equation*}
Since $\mathcal{G}^+$ (see \eqref{eq:def_G+-}) is convex and contains the ray $\{t(1,1):t<0\}$, it should also contain the rays $\{(x_0,f(x_0))+t(1,1):t<0\}$ and $\{(g(y_0),y_0)+t(1,1):t<0\}$. This implies that
    \begin{equation}\label{eq: graph inequalities}
        \widehat{f}(y) \leq y - c_1\quad\text{and}\quad
        \widehat{g}(x) \leq x - c_2,
    \end{equation}
    for all $y\leq f(x_0)$ and $x\leq g(y_0)$. Thanks to the above inequalities and the monotonicity of $\widehat{g}$, we first have
    \begin{equation*}
        a_1\leq \widehat{a}_1\quad\text{and}\quad b_1 = \widehat{g}(a_1)\leq \widehat{g}(\widehat{a}_1) \leq \widehat{b}_1.
    \end{equation*}
    By induction argument, we then have
    \begin{equation*}
        a_n= \widehat{f}(b_{n-1})\leq \widehat{f}(\widehat{b}_{n-1}) \leq \widehat{a}_n\quad\text{and}\quad
        b_n= \widehat{g}(a_{n-1})\leq \widehat{g}(\widehat{a}_{n-1}) \leq \widehat{b}_n,
    \end{equation*}
    for any $n\geq 2$. Similarly, $a_{-n}\leq \widehat{a}_{-n}$ and $b_{-n}\leq  \widehat{b}_{-n}$ for any $n\geq 1$. We now have
    \begin{multline*}
        \sum_{n\in\mathbb{Z}} (\alpha_n^i\beta_n^j + \alpha_{n+1}^i\beta_n^j)
        \leq  \sum_{n\in\mathbb{Z}} (\widehat{\alpha}_n^i\widehat{\beta}_n^j + \widehat{\alpha}_{n+1}^i\widehat{\beta}_n^j)=\widehat{\alpha}_0^i\widehat{\beta}_{-1}^j+\widehat{\alpha}_0^i\widehat{\beta}_0^j+ \widehat{\alpha}_1^i\widehat{\beta}_0^j+\\
        \sum_{n\geq 1} ( e^{-jc_2}+e^{jc_1} )\alpha_0^{i+j}\left(e^{-(c_1+c_2)(i+j)}\right)^n +\sum_{n\geq 1} ( e^{ic_2}+e^{-ic_1} )\beta_0^{i+j}\left(e^{-(c_1+c_2)(i+j)}\right)^n.
    \end{multline*}
The last sums are finite for any $i,j\geq 1$. Hence $h(i,j)$ converges absolutely for any $i,j\geq 1$.
    
We now prove that the series $h(i,j)$ satisfies Eq.~\eqref{eq:harmonicity_relation_interior}--\eqref{eq:Dirichlet_V}. Firstly, all the terms $\alpha_n^i\beta_n^j$ and $\alpha_{n+1}^i\beta_n^j$ satisfy Eq.~\eqref{eq:harmonicity_relation_interior} (since $(\alpha_n,\beta_n)$ and $(\alpha_{n+1},\beta_n)\in\mathcal{K}$), then $h(i,j)$ should also satisfy Eq.~\eqref{eq:harmonicity_relation_interior}. Secondly, since
\begin{equation*}
    h(i,0) = \sum_{n\in\mathbb{Z}}(-\alpha_n^i + \alpha_n^i) = 0\quad\text{and}\quad h(0,j) = \sum_{n\in\mathbb{Z}}(\beta_n^j - \beta_n^j) = 0,
\end{equation*}
then $h(i,j)$ satisfies Eq.~\eqref{eq:Dirichlet_H} and \eqref{eq:Dirichlet_V}.
    
    We now prove that $h(i,j)\geq 0$ for all values of $i,j\geq 1$, by rewriting
    \begin{equation*}
        h(i,j) =\alpha_0^i\beta_0^j\left(1- \left(\frac{\beta_{-1}}{\beta_0}\right)^j- \left(\frac{\alpha_1}{\alpha_0}\right)^i\right) + \sum_{n\geq 1}\alpha_{-n}^i\bigl( \beta_{-n}^j- \beta_{-(n+1)}^j\bigr) + \sum_{n\geq 1}\bigl( \alpha_n^i- \alpha_{n+1}^i\bigr)\beta_n^j.
    \end{equation*}
    The first term is positive for any $i,j\geq 1$ such that $i+j$ is large enough, whereas the second and third terms are positive since $\alpha_{-n}>\beta_{-(n+1)}>\alpha_{-(n+1)}$ and $\beta_n>\alpha_{n+1}>\beta_{n+1}$ for any $n\geq 0$. Hence, there exists $n_0>0$ such that $h(i,j)>0$ for all $i,j\geq 1$ and $i+j\geq n_0$. Thus,
    \begin{equation*}
        h(i,j) = \sum_{k,\ell} p_{k,\ell}h(i+k,j+\ell) \geq 0,
    \end{equation*}
    for any $i,j\geq 1$ and $i+j=n_0-1$. By induction argument, we have $h(i,j)\geq 0$ for all $i,j\geq 1$. The proof is then complete.
\end{proof}

The sign of harmonic funtions $h(i,j)$ can be asserted more specifically.

\begin{rem}\label{rem: positive harmonic func}
     If the walk has jumps inside the positive quadrant, i.e., there exist $k,\ell \geq  0$ such that $k+\ell>0$ and $p_{k,\ell}>0$, then $h(i,j)>0$ for all $i,j\geq 1$. On the other hand, if the walk does not have any jumps inside the positive quadrant, i.e., $p_{k,\ell}=0$ for all $k,\ell \geq 0$, then $h(i,j)>0$ for $i,j\geq 1$ and $i+j$ large enough, but $h(i,j)$ can vanish for some small $i,j$.  
\end{rem}

\subsection{Dependency on the starting point}
\label{subsec:starting}

In Lemma~\ref{lem:harmonic_expression}, we assumed that the starting point $(a_0,b_0)$ belongs to $\mathcal G_0$. In this subsection, we prove that we do not obtain more harmonic functions with starting points in $\mathcal G\setminus \mathcal G_0$.

We construct a sequence $\{(a_n,b_n)\}_{n\in\mathbb{Z}}$ as follows:
\begin{align}
    & (a_0,b_0)\in\mathcal{G},\label{eq: a_0, b_0 in mathcal_G}\\
    & a_n\not= a_{n-1} \text{ s.t. } (a_n,b_{n-1})\in\mathcal{G} , \quad b_n\not= b_{n-1} \text{ s.t. } (a_n,b_n)\in\mathcal{G},\label{eq: a_n, b_n, n>0}\\
    & b_{-n}\not= b_{-(n-1)} \text{ s.t. } (a_{-(n-1)},b_n)\in\mathcal{G},\quad a_{-n}\not= a_{-(n-1)} \text{ s.t. } (a_{-n},b_{-n})\in\mathcal{G},\quad n\geq 1.\label{eq: a_n, b_n, n<0}
\end{align}

Let us define
\begin{equation*}
    \mathcal{O}(\mathcal{G}_0):= \{ \{(a_n,b_n)\}_{n\in\mathbb{Z}}\text{ satisfying \eqref{eq: a_0, b_0 in mathcal_G}--\eqref{eq: a_n, b_n, n<0} s.t. } (a_0,b_0)\in\mathcal{G}_0\}.
\end{equation*}
In other words, $\mathcal{O}(\mathcal{G}_0)$ is the set of orbits formed by the sequences $\{(a_n,b_n)\}_{n\in\mathbb{Z}}$ satisfying \eqref{eq: a_0, b_0 in mathcal_G}--\eqref{eq: a_n, b_n, n<0} and initiated from a point in $\mathcal{G}_0$.

We have the following lemma.
\begin{lem}
    Any sequence $\{(a_n,b_n)\}_{n\in\mathbb{Z}}$ satisfying \eqref{eq: a_0, b_0 in mathcal_G}--\eqref{eq: a_n, b_n, n<0} belongs to the set $\mathcal{O}(\mathcal{G}_0)$. As a result, the series $h(i,j)$ defined in \eqref{eq: h(i,j)} does not depend on the initial value $(\alpha_0,\beta_0)$, but only depends on the orbits in $\mathcal{O}(\mathcal{G}_0)$. 
\end{lem}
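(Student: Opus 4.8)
The plan is to show that the "orbit" of any sequence satisfying \eqref{eq: a_0, b_0 in mathcal_G}--\eqref{eq: a_n, b_n, n<0} necessarily visits $\mathcal{G}_0$, and moreover that the resulting series $h(i,j)$ depends only on the orbit as a set, not on the choice of index origin. The key structural observation is that the constraints \eqref{eq: a_n, b_n, n>0}--\eqref{eq: a_n, b_n, n<0} are exactly the "compensation" moves: given a point of $\mathcal{G}$, each move replaces one coordinate by the unique \emph{other} preimage on $\mathcal{G}$ with the same remaining coordinate. So I would first analyse, for a fixed $\beta$-coordinate $b$, the fibre $\{x : (x,b)\in\mathcal{G}\}$; by Lemma~\ref{lem:definition_fg} the function $f$ is concave with a unique maximum at $x_0$, strictly increasing on $(-\infty,x_0]$ and strictly decreasing on $[x_0,0]$, hence each horizontal line $y=b$ with $b<f(x_0)$ meets $\mathcal{G}$ in exactly two points, one on the increasing branch (given by $\widehat f$) and one on the decreasing branch (given by $\widetilde f$); the analogous statement holds for vertical fibres via $g$, $\widehat g$, $\widetilde g$. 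Thus from a generic point of $\mathcal{G}$ the forward move $a_n\mapsto$ (other preimage of $b_{n-1}$) is literally $\widehat f$ or $\widetilde f$ depending on which branch we land on, and similarly for the other three moves.

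Next I would track what these moves do to the coordinates. Using the graph inequalities \eqref{eq: graph inequalities}, $\widehat f(y)\le y-c_1$ and $\widehat g(x)\le x-c_2$ with $c_1,c_2>0$: this shows that once a sequence enters the regime where all subsequent moves are "$\widehat{}$"-moves, the coordinates strictly decrease and tend to $-\infty$, which is precisely the convergent tail behaviour already exploited in Lemma~\ref{lem:harmonic_expression}. The point is that from \emph{any} starting point $(a_0,b_0)\in\mathcal{G}$, after finitely many moves in each direction the orbit is forced onto the $\widehat f,\widehat g$ branches (a "$\widetilde{}$"-move can only be used when the current coordinate still exceeds $f(x_0)$ resp.\ $g(y_0)$, and there can be only finitely many such coordinates because of strict monotonicity of the branches). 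So the bi-infinite orbit, read as a subset of $\mathcal{G}$, has two tails running down into $\mathbb{R}_-^2$ along $\widehat f$ and $\widehat g$, and a finite "middle" portion. It then suffices to show the middle portion meets $\mathcal{G}_0 = \{(x,f(x)):x\in(x_0,0]\}\cup\{(g(y),y):y\in(y_0,0]\}$: tracing the orbit forward (applying $\widehat g\circ\widehat f$ repeatedly) one climbs monotonically up toward the "corner" region near $x_0$ (resp.\ $y_0$); the last point before the orbit crosses $x=x_0$ or $y=y_0$ lies on one of the two arcs of $\mathcal{G}$ between $(x_0,f(x_0))$, $(0,0)$, $(g(y_0),y_0)$, i.e.\ in $\mathcal{G}_0$. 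Making this crossing argument precise — identifying exactly when the orbit enters $\mathcal{G}_0$ and verifying it does so exactly once — is where the case analysis has to be done carefully; this is the main obstacle, since one must handle the degenerate possibilities $x_0=0$, $y_0=0$ and orbits that start already inside $\mathcal{G}_0$.

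For the "as a result" clause, I would argue that shifting the index origin of a sequence in $\mathcal{O}(\mathcal{G}_0)$ by $k$ (i.e.\ replacing $(a_n,b_n)$ by $(a_{n+k},b_{n+k})$) produces another admissible sequence with the same underlying orbit, and that the telescoping structure of \eqref{eq: h(i,j)},
\[
  h(i,j)=\sum_{n\in\mathbb Z}\bigl(\alpha_n^i\beta_n^j-\alpha_{n+1}^i\beta_n^j\bigr),
\]
together with absolute convergence (Lemma~\ref{lem:harmonic_expression}), makes the sum invariant under such a reindexing — each term of the series is attached to an ordered \emph{edge} $\{(\alpha_n,\beta_n),(\alpha_{n+1},\beta_n)\}$ of the orbit, and the set of such edges, hence the sum, is intrinsic to the orbit. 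Finally, since every admissible sequence lies in $\mathcal{O}(\mathcal{G}_0)$ by the first part, $h(i,j)$ is well defined as a function of the orbit alone. I expect the first part (showing every orbit hits $\mathcal{G}_0$) to be the genuine content; the invariance statement is a formal consequence of absolute convergence and the telescoping form once the first part is in hand.
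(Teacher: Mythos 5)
Your strategy coincides with the paper's: one shows that every bi-infinite orbit must visit $\mathcal{G}_0$ by a case analysis on the position of $(a_0,b_0)$, using the inequalities $f(x)\geq x+c_1$ and $g(y)\geq y+c_2$ on the tails to force the orbit to climb past $x_0$ or $y_0$ after finitely many steps, while the independence of $h(i,j)$ from the index origin is exactly the telescoping/absolute-convergence remark you make. The only slip is directional: iterating $\widehat g\circ\widehat f$ \emph{decreases} the coordinates (it builds the tails), so to climb toward the corner one must iterate the increasing branches $g\circ f$, as in the paper's final case.
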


\begin{proof}
    Assume that $\{(a_n,b_n)\}_{n\in\mathbb{Z}}$ satisfies \eqref{eq: a_0, b_0 in mathcal_G}--\eqref{eq: a_n, b_n, n<0} with $(a_0,b_0)\notin \mathcal{G}_0 $. It is sufficient to show that there always exists $n_0\in\mathbb{Z}$ such that $(a_{n_0},b_{n_0})\in\mathcal{G}_0$ or $(a_{n_0},b_{n_0-1})\in\mathcal{G}_0$.
    
    If $0\leq a_0< g(y_0)$, then $(a_0,b_{-1})=\bigl(a_0,(g|_{[y_0,0]})^{-1}(a_0)\bigr)\in\mathcal{G}_0$. Similarly, if $0\leq b_0< f(x_0)$, then $(a_1,b_0) =\bigl((f\vert_{[x_0,0]})^{-1}(b_0)\bigr) \in\mathcal{G}_0$.
    
    If $x_0<a_0<0$, then $(a_0,b_{-1})=(a_0,f(a_0))\in\mathcal{G}_0$. If $y_0<b_0<0$, $(a_1,b_0)=(g(b_0),b_0)\in\mathcal{G}_0$.
    
    If $a_0<x_0$ and $b_0<y_0$, without loss of generality, we will further assume that $b_0 = f(a_0)$. Since
    \begin{equation*}
        f(x) \geq x + c_1\quad\text{and}\quad
        g(y) \geq y + c_2,
    \end{equation*}
    for all $x\leq x_0$ and $y\leq y_0$, then there exists $n_0\geq 1$ such that $a_{n_0}=(g\circ f)^{\circ (n)}(a_0)>x_0$ or $b_{n_0}=f\circ(g\circ f)^{\circ (n)}(a_0)>y_0$. Hence, $a_{n_0}$ or $b_{n_0}$ falls into the preceding cases. The rest of the proof then follows trivially.
\end{proof}

\subsection{Behavior on the boundary}
\label{subsec:boundary}

We will show that the harmonic functions defined in \eqref{eq: h(i,j)}, considered as a function of $ (a_0,b_0)\in\mathcal{G}_0$,  will converge (after normalization) to a non-trivial positive harmonic function as $(a_0,b_0)$ tends to the boundary of $\mathcal{G}_0$, which consists of the points $(x_0,f(x_0))$ and $(g(y_0),y_0)$, with $x_0$ and $y_0$ introduced in Lemma~\ref{lem:definition_fg}.

For that matter, let us re-introduce the sequence $\{(a_n,b_n)\}_{n\in\mathbb{Z}}$ in \eqref{eq:recursion_0_n}--\eqref{eq:recursion_negative_n} as functions of one variable $y$:
\begin{align*}
	&b_0(y)=y\in (y_0,0),\quad a_0(y) = g(y),\quad b_{-1}(y) = \widehat{g}\circ g (y),\\
	&(a_{n}(y),b_{n}(y))= \left( \widehat{f}\circ(\widehat{g}\circ\widehat{f})^{\circ(n-1)}(y), (\widehat{g}\circ\widehat{f})^{\circ(n)}(y) \right),& n\geq 1,\\
	&(a_{-n}(y),b_{-(n+1)}(y))= \left( \widehat{f}\circ(\widehat{g}\circ\widehat{f})^{\circ(n-1)}\circ b_{-1}(y), (\widehat{g}\circ\widehat{f})^{\circ(n)}\circ b_{-1}(y) \right),& n\geq 1.
\end{align*}
We further denote
\begin{align*}
	(\alpha_n(y),\beta_n(y)) &= \left(e^{a_n(y)},e^{b_n(y)}\right),\quad n\in\mathbb{Z},\\
	 h_y(i,j) &= \sum_{n\geq 0}  \alpha_n(y)^i\beta_n(y)^j - \alpha_{n+1}(y)^i\beta_n(y)^j  ,
\end{align*}
for all $i,j\geq 0$, $y\in (y_0,0)$. Following the above construction, we remark that
\begin{equation*}
	\bigl(a_{-n}(y),b_{-(n+1)}(y)\bigr) = \bigl(a_n\circ b_{-1}(y),b_n\circ b_{-1}(y)\bigr)
\end{equation*}
for any $n\geq 1$. Then, putting for any $i,j\geq 0$ and $y\in(y_0,0)$
\begin{equation*}
	t_{i,j}(y) = \sum_{n\geq 0}  \alpha_{n}(y)^i\beta_{n}(y)^j - \alpha_{n+1}(y)^i\beta_{n}(y)^j,
\end{equation*}
one has
\begin{equation*}
	h_y(i,j) = t_{i,j}(y) - t_{i,j}(b_{-1}(y)).
\end{equation*}
The following proposition shows the convergence of harmonic functions $h_y(i,j)$ as $y\to y_0^+$.

\begin{prop}
\label{prop:boundary_expression_hf}
	For any $i,j\geq 0$, 
	\begin{equation*}
		\lim_{y\to y_0^+} h_y(i,j) = 0.
	\end{equation*}
	Moreover, for any $i,j\geq 0$, $t_{i,j}(y)$ is differentiable on $[y_0,0)$ and
	\begin{equation*}
		\lim_{y\to y_0^+}\frac{h_y(i,j)}{y-y_0} = 2t_{i,j}' (y_0).
	\end{equation*}
	Furthermore, $t_{i,j}'(y_0)\geq 0$ for all $i,j\geq 1$.
\end{prop}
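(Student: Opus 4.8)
The plan is to analyze the behavior of each piece $t_{i,j}(y)$ and $t_{i,j}(b_{-1}(y))$ separately as $y\to y_0^+$, and to exploit the fact that at $y=y_0$ the two boundary points of $\mathcal G_0$ merge: when $y=y_0$ we have $a_0(y_0)=g(y_0)$, and since $g(y_0)$ is the argmax of $g$, the point $(g(y_0),y_0)$ is exactly the extremal point where $\widehat f$ and the map $(f_{|(0,x_0]})^{-1}$ agree. I would first show that $b_{-1}(y_0)=\widehat g\circ g(y_0)=\widehat g(g(y_0))=y_0$, because $g(y_0)=g(y_0)$ forces $\widehat g$ evaluated at the max value of $g$ to return $y_0$ itself (the unique preimage on $(-\infty,y_0]$). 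Hence at $y=y_0$ the two sequences $\{(a_n(y),b_n(y))\}$ and $\{(a_n(b_{-1}(y)),b_n(b_{-1}(y)))\}$ coincide term by term, so $h_{y_0}(i,j)=t_{i,j}(y_0)-t_{i,j}(y_0)=0$. By continuity of each $\alpha_n(y)^i\beta_n(y)^j$ in $y$ (each $\alpha_n,\beta_n$ is built from compositions of the smooth functions $\widehat f,\widehat g,g$, cf. Lemma~\ref{lem:definition_fg}), together with a dominated-convergence argument using the uniform geometric bounds on the tails established in the proof of Lemma~\ref{lem:harmonic_expression} (the auxiliary sequence $(\widehat a_n,\widehat b_n)$ bounds the series uniformly for $y$ in a left-neighborhood of $0$), one gets $\lim_{y\to y_0^+} h_y(i,j)=h_{y_0}(i,j)=0$.

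For the second claim I would establish that $t_{i,j}(y)$ is $C^1$ on $[y_0,0)$: term-by-term differentiation is justified by the same uniform geometric domination (the derivatives $\frac{d}{dy}(\alpha_n(y)^i\beta_n(y)^j)$ inherit comparable exponential decay because differentiating the compositions only produces bounded factors $f',g',\widehat f',\widehat g'$, which are bounded on the relevant compact-after-truncation ranges). Then, writing $h_y(i,j)=t_{i,j}(y)-t_{i,j}(b_{-1}(y))$ and using $b_{-1}(y_0)=y_0$, a first-order Taylor expansion gives
\begin{equation*}
   \frac{h_y(i,j)}{y-y_0}=t_{i,j}'(y_0)\cdot\frac{y-b_{-1}(y)}{y-y_0}+o(1)
   = t_{i,j}'(y_0)\bigl(1-b_{-1}'(y_0)\bigr)+o(1).
\end{equation*}
So it remains to compute $b_{-1}'(y_0)=(\widehat g\circ g)'(y_0)=\widehat g'(g(y_0))\cdot g'(y_0)$. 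Since $g'(y_0)=0$ (it is the interior maximum) but $\widehat g'(g(y_0))=+\infty$ (the inverse of a function with vanishing derivative has infinite derivative at the corresponding point), this product is the indeterminate form $0\cdot\infty$; the clean way to resolve it is to differentiate the identity $g(\widehat g(x))=x$ near $x=g(y_0)$ more carefully, or equivalently to parametrize the curve $\mathcal G$ smoothly near the apex $(g(y_0),y_0)$ and observe that the reflection $y\mapsto b_{-1}(y)$ is, to first order, the map $y\mapsto 2y_0-y$ (reflection across $y_0$ along the smooth curve), whence $b_{-1}'(y_0)=-1$. Plugging in gives the factor $1-(-1)=2$, i.e. $\lim_{y\to y_0^+}h_y(i,j)/(y-y_0)=2t_{i,j}'(y_0)$.

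The main obstacle is precisely this last computation of $b_{-1}'(y_0)$: because $\widehat g$ is not differentiable at its endpoint $g(y_0)$ in the naive sense, one cannot just apply the chain rule, and one must instead argue via the local quadratic behavior of $g$ at its maximum (so that $g(y)-g(y_0)\sim \tfrac12 g''(y_0)(y-y_0)^2$ with $g''(y_0)<0$ by strict concavity, using Lemma~\ref{lem:definition_fg}\ref{item: f, g well defined, concave} and the strict monotonicity in \ref{item: max g(y) and g(-infty)}) and invert this relation to see that $\widehat g(g(y))=2y_0-y+o(y-y_0)$. Finally, the sign statement $t_{i,j}'(y_0)\geq 0$ follows from the limit identity itself: by the first part $h_y(i,j)\to 0$ from above (each $h_y(i,j)\geq 0$ for $i,j\geq 1$ by the positivity in Lemma~\ref{lem:harmonic_expression}, at least for $i+j$ large, and then propagated by harmonicity), so $h_y(i,j)/(y-y_0)\geq 0$ for $y>y_0$, and passing to the limit gives $2t_{i,j}'(y_0)\geq 0$.
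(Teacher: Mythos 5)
Your proposal is correct and follows essentially the same route as the paper: you decompose $h_y(i,j)=t_{i,j}(y)-t_{i,j}(b_{-1}(y))$, justify term-by-term differentiation of $t_{i,j}$ by the geometric domination of the tails, obtain the crucial identity $b_{-1}'(y_0)=-1$ from the quadratic behavior of $g$ at its strict interior maximum (exactly the paper's resolution of the $0\cdot\infty$ chain-rule obstruction), and deduce the sign of $t_{i,j}'(y_0)$ from the non-negativity of $h_y$. The only cosmetic difference is that you use a first-order Taylor expansion where the paper invokes L'H\^opital's rule; both yield the factor $1-b_{-1}'(y_0)=2$.
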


The remainder of this section is devoted to proving Proposition~\ref{prop:boundary_expression_hf}. We first study the derivatives of some important functions.

\begin{lem}\label{lem: derivatives of some functions}
	We have:
	\begin{enumerate}
		\item\label{item: b1'} $\lim_{y\to y_0^+}b_{-1}(y) = y_0$ and $b_{-1}'(y_0) = -1$;
		\item\label{item: f hat'} $\widehat{f}'$ is strictly positive and strictly increasing on $(-\infty,f(x_0))$. Furthermore, \begin{equation*}
		    \lim_{y\to-\infty}\widehat{f}'(y) = 1;
		\end{equation*}
		\item\label{item: g hat'} $\widehat{g}'$ is strictly positive and strictly increasing on $(-\infty,g(y_0))$. Furthermore, \begin{equation*}
		\lim_{x\to-\infty}\widehat{g}'(x) = 1.
		\end{equation*}
	\end{enumerate}
\end{lem}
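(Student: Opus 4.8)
\textbf{Proof plan for Lemma~\ref{lem: derivatives of some functions}.}

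The plan is to establish the three items essentially by differentiating the implicit equations defining $\widehat f$, $\widehat g$ and $b_{-1}$, and then extracting the monotonicity and limiting behaviour from the convexity/concavity structure already recorded in Lemmas~\ref{lem: properties of mathcal G} and~\ref{lem:definition_fg}. For Item~\ref{item: f hat'}, I would start from the fact that $\widehat f = (f_{\vert(-\infty,x_0]})^{-1}$, so that $\widehat f' (y) = 1/f'(\widehat f(y))$ for $y < f(x_0)$. Since $f$ is concave and strictly increasing on $(-\infty, x_0)$, its derivative $f'$ is positive and decreasing there, with $f'(x)\to x_0^-$-value $0$ as $x\to x_0^-$ and (this is the point needing an argument) $f'(x)\to 1$ as $x\to -\infty$. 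Hence $\widehat f'(y) = 1/f'(\widehat f(y))$ is positive, and as $y$ decreases to $-\infty$ we have $\widehat f(y)\to -\infty$, so $f'(\widehat f(y))\to 1$ and $\widehat f'(y)\to 1$; moreover $f'$ decreasing on $(-\infty,x_0)$ together with $\widehat f$ increasing gives that $\widehat f'$ is strictly increasing on $(-\infty, f(x_0))$. The analogue for $\widehat g'$ (Item~\ref{item: g hat'}) is identical after exchanging the roles of the two coordinates.

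The key sublemma, then, is the limit $f'(x)\to 1$ as $x\to-\infty$ (and symmetrically $g'(y)\to 1$). Here I would use the explicit formula $f'(x) = -\partial_x G(x,f(x))/\partial_y G(x,f(x))$ from the proof of Lemma~\ref{lem:definition_fg}, with $G(x,y)=\sum_{k,\ell}p_{k,\ell}e^{kx+\ell y}-1$. Along the lower tail of $\mathcal G$ we have $x\to-\infty$ and $f(x)\to-\infty$ by Lemma~\ref{lem:definition_fg}\ref{item: max f(x) and f(-infty)}, and moreover (from Lemma~\ref{lem: properties of mathcal G}\ref{item: mathcal G^+ convex}) the tail stays in the region between the lines of slope comparable to the diagonal, so that the dominant exponential terms in both $\partial_x G$ and $\partial_y G$ are those with the most negative total exponent; because the walk is singular and has small negative jumps, the relevant surviving terms are $p_{-1,1}e^{-x+f(x)}$ and $p_{1,-1}e^{x-f(x)}$, together with the constant $-1$ dominating in $G$ itself. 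A short computation of the ratio $-\partial_x G/\partial_y G$ in this regime, keeping track of which exponentials blow up and which vanish, yields the limit $1$. This is the step I expect to be the main obstacle: one must be careful about which of the two ``corner'' terms $e^{-x+f(x)}$ and $e^{x-f(x)}$ dominates along the tail, which amounts to controlling whether $f(x)-x\to+\infty$ or stays bounded, and it is precisely the inequality $f(x)\geq x+c_1$ for $x\leq x_0$ (used elsewhere in the excerpt) that pins this down. I would feed that inequality in to conclude that $e^{-x+f(x)}\to\infty$ while $e^{x-f(x)}\to 0$, making the ratio tend to $1$.

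Finally, for Item~\ref{item: b1'}, recall $b_{-1}(y) = \widehat g\circ g(y)$. As $y\to y_0^+$ we have $g(y)\to g(y_0)$ from below (since $g$ is strictly decreasing on $(y_0,0)$ and continuous at $y_0$), so $\widehat g(g(y))\to \widehat g(g(y_0)) = y_0$, giving the first assertion. For the derivative, $b_{-1}'(y) = \widehat g'(g(y))\cdot g'(y)$; since $\widehat g = (g_{\vert(-\infty,y_0]})^{-1}$ we have $\widehat g'(g(y_0^-))= 1/g'(y_0^-)$, but at $y_0$ the one-sided derivative of $g$ from the left and from the right are both $g'(y_0)$ (with $g'(y_0)=0$ being the troublesome value since $y_0$ is the argmax). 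To avoid dividing by zero I would instead argue directly with the parametrization: near $y_0$ the curve $\mathcal G$ is smooth (Lemma~\ref{lem:definition_fg}\ref{item: f, g well defined, concave}) with a horizontal tangent in the $(g(y),y)$ chart, and the map $y\mapsto b_{-1}(y)$ is the composition that sends a point just above $y_0$ on the ``upper'' branch $g_{\vert(y_0,0]}$ to the ``lower'' branch $g_{\vert(-\infty,y_0]}$ at the same abscissa; by the local quadratic behaviour of $g$ at its maximum $y_0$, reflecting across $y_0$ this composition has derivative $-1$ at $y_0$, i.e.\ $b_{-1}'(y_0) = -1$. Concretely: writing $g(y) = g(y_0) - \tfrac12|g''(y_0)|(y-y_0)^2 + o((y-y_0)^2)$ with $g''(y_0)<0$, solving $g(z) = g(y)$ for $z$ on the left branch gives $z - y_0 = -(y-y_0) + o(y-y_0)$, whence the claimed derivative. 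I would present this last computation tersely, as it is a standard reflection-at-a-smooth-maximum argument.
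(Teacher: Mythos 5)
Your Item (i) is essentially the paper's own argument: the paper likewise expands $g(y)=g(y_0)+\tfrac{1}{2}g''(y_0)(y-y_0)^2+o((y-y_0)^2)$ at the maximum and deduces $(b_{-1}(y)-y_0)/(y-y_0)\to -1$ from the sign condition $b_{-1}(y)<y_0<b_0(y)$. For Items (ii)--(iii) you take a genuinely different route. The paper passes to the coordinates $(\alpha,\beta)=(e^x,e^y)$, uses that $(0,0)$ is a double point of $K(\alpha,\beta)=0$, extracts $\alpha'(0)=\frac{1-p_{0,0}\pm\sqrt{(1-p_{0,0})^2-4p_{1,-1}p_{-1,1}}}{2p_{1,-1}}$ from the second implicit derivative of $K(\alpha(\beta),\beta)=0$, and concludes via $\widehat{f}'(y)=\alpha'(e^y)e^y/e^{\widehat{f}(y)}\to\alpha'(0)/\alpha'(0)=1$. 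You instead stay in logarithmic coordinates and chase the limit of $f'=-\partial_xG/\partial_yG$ along the tail. Both routes can be made to work, and yours has the merit of making the role of the steps $(\pm1,\mp1)$ and $(0,0)$ transparent.

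However, the way you resolve what you correctly flag as the main obstacle is wrong. You claim $f(x)-x\to+\infty$, hence $e^{-x+f(x)}\to\infty$ and $e^{x-f(x)}\to 0$, and you propose to deduce this from $f(x)\geq x+c_1$. That inequality only gives the lower bound $f(x)-x\geq c_1$, and in fact $f(x)-x$ cannot tend to $+\infty$: the curve equation $G(x,f(x))=0$ reads $\sum_{k,\ell}p_{k,\ell}e^{kx+\ell f(x)}=1$, so each nonnegative term is at most $1$, and in particular $p_{-1,1}e^{f(x)-x}\leq 1$, i.e.\ $f(x)-x\leq \log(1/p_{-1,1})$. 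The correct asymptotics are: since every step has $k+\ell\geq 0$ and $f(x)-x$ stays bounded, all terms with $k+\ell>0$ vanish as $x\to-\infty$, and $v(x):=e^{f(x)-x}$ converges to the root $u_+>1$ of $p_{-1,1}u+p_{1,-1}u^{-1}=1-p_{0,0}$ (this is the paper's $\alpha'(0)$, up to inversion). The surviving contributions to $\partial_xG$ and $\partial_yG$ are then $-p_{-1,1}v+p_{1,-1}v^{-1}$ and $+p_{-1,1}v-p_{1,-1}v^{-1}$, which are exact negatives of each other; the non-degeneracy assumption (v) gives $(1-p_{0,0})^2>4p_{-1,1}p_{1,-1}$, hence $p_{-1,1}u_+-p_{1,-1}u_+^{-1}\neq 0$, so the ratio $-\partial_xG/\partial_yG$ does tend to $1$. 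Your conclusion is therefore correct, but the dominance claim on which you base it is false and must be replaced by this bounded-ratio analysis (or by the paper's computation of $\alpha'(0)$).
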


\begin{proof}
	We first prove Item~\ref{item: b1'}. Since $g(y)$ is strictly concave on $(-\infty,0)$ and admits its maximum value at $y_0$, then $g'(y_0) = 0$ and
	\begin{equation*}
		g(y) = g(y_0) + \frac{g''(y_0)}{2} (y-y_0)^2 + o((y-y_0)^2)
	\end{equation*}
	as $y\to y_0$, where $g''(y_0)<0$. Hence,
	\begin{equation*}
		\lim_{y\to y_0^+} \left(\frac{b_{-1}(y)-y_0}{b_0(y)-y_0}\right)^2 =\lim_{y\to y_0^+} \frac{g(b_{-1}(y))-g(y_0)}{g(b_0(y))-g(y_0)} = 1.
	\end{equation*}
	This implies $b_{-1}(y)\to y_0$ as $b_0(y)=y\to y_0^+$. Since $b_{-1}(y)<y_0<b_0(y)$ for all $y\in (y_0,0)$, then $b_{-1}'(y_0)=-1$.
	
	We move to the proof of Item~\ref{item: f hat'} (Item~\ref{item: g hat'} would be proven similarly). Since $\widehat{f}$ is strictly convex and strictly increasing on $(-\infty,f(x_0))$, then $\widehat{f}'$ is strictly increasing and positive on $(-\infty,f(x_0))$. Putting
	\begin{equation*}
	\alpha(\beta) = e^{\widehat{f}(\log\beta)},\quad \beta\geq 0,
	\end{equation*}
	we first study the behavior of $\alpha'(\beta)$ as $\beta\to 0$. Since $\alpha(\beta)$ satisfies $K(\alpha(\beta),\beta)=0$, one may differentiate once and twice the equation and evaluate them at $\beta=0$ as follows:
	\begin{align*}
	& \partial_\alpha K(0,0)\alpha'(0)+\partial_\beta K(0,0)  = 0,\\
	& \partial_{\alpha\alpha} K(0,0)\alpha'(0)^2 + 2\partial_{\alpha\beta} K(0,0) \alpha'(0) + \partial_{\beta\beta} K(0,0) + \partial_\alpha K(0,0)\alpha''(0) = 0.
	\end{align*}
	While the value of $\alpha'(0)$ cannot be deduced from the first equation,  $\alpha'(0)$ can be solved explicitly from the second:
	\begin{equation*}
	\alpha' (0) = \frac{1-p_{0,0}\pm \sqrt{(1-p_{0,0})^2-4p_{1,-1}p_{-1,1}}}{2p_{1,-1}}\not= 0,\infty.
	\end{equation*}
	These solutions correspond to the behavior of the two branches $(e^{\widehat{f}(y)},e^y)$ and $(e^{g(y)},e^y)$ as $y\to-\infty$. This implies that
	\begin{equation*}
	\widehat{f}'(y) = \alpha'(e^y)\frac{e^y}{e^{\widehat{f}(y)}}\overset{y\to-\infty}{\longrightarrow}\alpha' (0)\frac{1}{\alpha' (0)} = 1.\qedhere
	\end{equation*}
\end{proof}

The following lemma shows the convergence of $t_{i,j}'(y)$.
\begin{lem}
	For any $i,j\geq 1$ and $y\in [y_0,0)$, the series
	\begin{equation*}
		t_{i,j}'(y) = \sum_{n\geq 0} \bigl( \alpha_n(y)^i \beta_n (y)^j - \alpha_{n+1}(y)^i \beta_n (y)^j \bigr)'
	\end{equation*}
	converges absolutely.
\end{lem}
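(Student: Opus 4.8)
The strategy is to bound each summand term-by-term by a geometric series, following the same idea used in the convergence part of the proof of Lemma~\ref{lem:harmonic_expression}, but now differentiating the product forms in $y$. First I would compute the derivative of a single term. Writing $u_n(y) := \alpha_n(y)^i\beta_n(y)^j - \alpha_{n+1}(y)^i\beta_n(y)^j$, one has
\begin{equation*}
    u_n'(y) = \bigl(i\, a_n'(y)\alpha_n(y)^i\beta_n(y)^j + j\, b_n'(y)\alpha_n(y)^i\beta_n(y)^j\bigr) - \bigl(i\, a_{n+1}'(y)\alpha_{n+1}(y)^i\beta_n(y)^j + j\, b_n'(y)\alpha_{n+1}(y)^i\beta_n(y)^j\bigr),
\end{equation*}
so that $|u_n'(y)| \leq (i+j)\bigl(|a_n'(y)| + |b_n'(y)| + |a_{n+1}'(y)|\bigr)\,\max\bigl(\alpha_n(y),\alpha_{n+1}(y)\bigr)^i\beta_n(y)^j$. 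Thus the problem reduces to two ingredients: (a) a uniform bound on the derivatives $a_n'(y)$ and $b_n'(y)$ in $n$, and (b) the geometric decay of $\alpha_n(y)^i\beta_n(y)^j$ already established (via the comparison sequence $(\widehat a_n,\widehat b_n)$) in the proof of Lemma~\ref{lem:harmonic_expression}.

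For ingredient (a), note that $a_n(y) = \widehat f\circ(\widehat g\circ\widehat f)^{\circ(n-1)}(y)$ and $b_n(y) = (\widehat g\circ\widehat f)^{\circ(n)}(y)$ for $n\geq 1$, so by the chain rule $b_n'(y)$ is a product of $2n$ factors of the form $\widehat f'(\cdot)$ and $\widehat g'(\cdot)$ evaluated along the orbit, and $a_n'(y)$ is a similar product. By Lemma~\ref{lem: derivatives of some functions}\,\ref{item: f hat'}--\ref{item: g hat'}, $\widehat f'$ and $\widehat g'$ are positive, strictly increasing, and tend to $1$ at $-\infty$; since the arguments $(\widehat g\circ\widehat f)^{\circ(k)}(y)$ march off to $-\infty$ (by the graph inequalities \eqref{eq: graph inequalities}, each composition decreases by at least $c_1+c_2$), every factor lies in $(0,M]$ for $M := \max(\widehat f'(f(x_0)),\widehat g'(g(y_0)),1)$. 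Hence $0 < a_n'(y), b_n'(y) \leq M^{2n}$ — but this crude bound alone is not enough, so instead I would observe that the factors evaluated far out along the orbit are close to $1$: pick $N$ so that $\widehat f'(\cdot),\widehat g'(\cdot) \leq 1 + \epsilon$ beyond the $N$-th step, giving $a_n'(y),b_n'(y) \leq M^{2N}(1+\epsilon)^{2n}$ with $1+\epsilon$ as close to $1$ as we like. Combined with the geometric decay rate $e^{-(c_1+c_2)(i+j)}$ from Lemma~\ref{lem:harmonic_expression} (which is a fixed constant $<1$ once $i,j\geq 1$), choosing $\epsilon$ small enough that $(1+\epsilon)^2 e^{-(c_1+c_2)(i+j)} < 1$ makes $\sum_{n\geq 0}|u_n'(y)|$ dominated by a convergent geometric series, uniformly for $y\in[y_0,0)$.

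The main obstacle is making ingredient (a) airtight: one must verify that the orbit $(\widehat g\circ\widehat f)^{\circ(k)}(y)$ genuinely escapes to $-\infty$ at a rate independent of the starting point $y\in[y_0,0)$ (which follows from \eqref{eq: graph inequalities}, since $\widehat f(z)\leq z-c_1$ and $\widehat g(z)\leq z-c_2$ for $z$ below the relevant thresholds, and the first step $\widehat f(y)$ for $y\in[y_0,0)$ lands below $f(x_0)$), and that this lets us localize all but finitely many derivative factors into the region where $\widehat f',\widehat g'$ are within $\epsilon$ of $1$. The negative-index terms $n\leq -1$ are handled identically after noting $(a_{-n}(y),b_{-(n+1)}(y)) = (a_n(b_{-1}(y)),b_n(b_{-1}(y)))$ and that $b_{-1}'$ is bounded near $y_0$ by Lemma~\ref{lem: derivatives of some functions}\,\ref{item: b1'}. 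Finally, since each $u_n'$ is continuous on the compact-after-one-step orbit data and the series of derivatives converges uniformly, term-by-term differentiation is justified, which also yields the differentiability of $t_{i,j}$ on $[y_0,0)$ asserted in Proposition~\ref{prop:boundary_expression_hf}.
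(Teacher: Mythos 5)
Your proposal is correct and follows essentially the same route as the paper: express $a_n'(y)$ and $b_n'(y)$ via the chain rule as products of factors $\widehat f'(\cdot),\widehat g'(\cdot)$ evaluated along the orbit, use that these factors tend to $1$ as the orbit escapes to $-\infty$ to conclude that $a_n'(y),b_n'(y)=o(r^n)$ for every $r>1$, and combine this subexponential growth with the geometric bound $\alpha_n(y)^i\beta_n(y)^j=O\bigl(e^{-(i+j)(c_1+c_2)n}\bigr)$ already obtained in the proof of Lemma~\ref{lem:harmonic_expression}. Your write-up is in fact slightly more explicit than the paper's in quantifying the $o(r^n)$ bound (via the cutoff $N$ and the factor $(1+\epsilon)^{2n}$); the remarks on negative indices are not needed for this lemma, whose sum runs only over $n\geq 0$, but they do no harm.
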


\begin{proof}
	By Lemma~\ref{lem: derivatives of some functions}, the derivatives of $\alpha_n(y)$ and $\beta_n(y)$ are well defined on $[y_0,0)$ for all $n\geq 0$.
	Since $\widehat{f}(y),\widehat{g}(x)\downarrow 1$ as $x,y\to-\infty$, then
	\begin{align*}
	&a_{n}'(y) = \widehat{f}\circ b_{n-1}(y) \prod_{k=1}^{n-1}\widehat{g}'\circ a_{k}(y) \cdot \widehat{f}'\circ b_{k-1}(y)=o(r^n),\\
	&b_{n}'(y) = \prod_{k=1}^{n}\widehat{g}'\circ a_{k}(y) \cdot \widehat{f}'\circ b_{k-1}(y)=o(r^n),
	\end{align*}
	as $n\to\infty$, for any $r>1$. Let us now fix $i,j\geq 1$. Recall from the proof of Lemma~\ref{lem:harmonic_expression} that
	\begin{equation*}
		\alpha_n(y)^i\beta_n(y)^j \leq  y^{i+j}e^{ic_2}\left(e^{-(i+j)(c_1+c_2)}\right)^n = O \left(e^{-(i+j)(c_1+c_2)}\right)^n,
	\end{equation*}
	as $n\to\infty$, where $c_1$ and $c_2$ are positive constants. Hence,
	\begin{equation*}
			\bigl\vert \bigl(\alpha_n(y)^i \beta_n (y)^j \bigr)'\bigr\vert = \bigl\vert \bigl(ia_n'(y)+ jb_n'(y)\bigr) \alpha_n(y)^i \beta_n (y)^j \bigr\vert \leq o(r^n),
	\end{equation*}
	as $n\to\infty$, for any $y\in[y_0,0)$ and $r>0$. Similarly, we also have
	\begin{equation*}
		\bigl\vert \bigl(\alpha_{n+1}(y)^i \beta_n (y)^j \bigr)' \bigr\vert \leq o(r^n),
	\end{equation*}
	as $n\to\infty$, for any $y\in[y_0,0)$ and $r>0$. Thus, the series
	\begin{equation*}
	    t_{i,j}'(y) = \sum_{n\geq 0}\bigl( ia_n'(y)+jb_n'(y)\bigr)\alpha_n(y)^i \beta_n (y)^j - \bigl(ia_{n+1}'(y)+jb_n'(y)\bigr)\alpha_{n+1}(y)^i \beta_n (y)^j
	\end{equation*}
	converges absolutely for any $i,j\geq 1$.
\end{proof}

We now prove Proposition~\ref{prop:boundary_expression_hf}.
\begin{proof}[Proof of Proposition~\ref{prop:boundary_expression_hf}]
	Since $b_{-1}(y)\to y_0$ as $y\to y_0^+$ by Lemma~\ref{lem: derivatives of some functions}\ref{item: b1'}, then
	\begin{equation*}
		h_y(i,j) = t_{i,j}(y) - t_{i,j}(b_{-1}(y)) \to 0,
	\end{equation*}
	as $y\to y_0^+$. By L'H\^opital's rule, we now deduce
	\begin{equation*}
		\frac{h_y(i,j)}{y-y_0} \overset{y\to y_0^+}{\longrightarrow} t_{i,j}'(y_0) - t_{i,j}'(b_{-1}(y_0))b_{-1}'(y_0) = 2t_{i,j}'(y_0).	
	\end{equation*}
	Since $h_y(i,j)$ is harmonic and non-negative for all $y\in(y_0,0)$, this guarantees that $t_{i,j}'(y_0)$ is harmonic and non-negative for all $i,j\geq 1$.
	
	We now prove that $t_{i,j}' (y_0)>0$ for all $i,j\geq 1$ and $i+j$ large enough. Since for any $n\geq 0$ one has $\alpha_n(y_0) > \alpha_{n+1}(y_0)$, then
	\begin{equation}\label{eq: alpha_n/alpha_n+1_1}
	    jb_n'(y_0)\alpha_n(y_0)^i\beta_n(y_0)^j - jb_n'(y_0)\alpha_{n+1}(y_0)^i\beta_n(y_0)^j >0,
	\end{equation}
	for any $n\geq 0$ and $i,j\geq 1$. We then show that
	\begin{equation}\label{eq: alpha_n/alpha_n+1_2}
	    ia_n'(y_0)\alpha_n(y_0)^i\beta_n(y_0)^j - ia_{n+1}'(y_0)\alpha_{n+1}(y_0)^i\beta_n(y_0)^j >0,
	\end{equation}
	or equivalently,
	\begin{equation*}
	    \alpha_n(y_0)^i > \alpha_{n+1}(y_0)^i\bigl(\widehat{f}'\circ b_n(y_0)\bigr)\cdot\bigl( \widehat{g}'\circ a_n(y_0)\bigr),
	\end{equation*}
	for any $n\geq 0$, $j\geq 1$ and $i$ large enough. Indeed, recall that
	\begin{equation*}
	    \frac{\alpha_n(y_0)}{\alpha_{n+1}(y_0)} = e^{a_n(y_0)-a_{n+1}(y_0)} \geq e^{f(x_0)-x_0 + g(y_0)-y_0} >0,
	\end{equation*}
	for any $n\geq 0$, and $(\widehat{f}'\circ b_n(y_0))\cdot (\widehat{g}'\circ a_n(y_0))\downarrow 1$  as $n\to\infty$. Hence, for any $n\geq 0$ and $i$ large enough,
	\begin{equation*}
	    \left(\frac{\alpha_n(y_0)}{\alpha_{n+1}(y_0)}\right)^i > \bigl(\widehat{f}'\circ b_n(y_0) \bigr)\cdot \bigl(\widehat{g}'\circ a_n(y_0)\bigr),
	\end{equation*}
	which is equivalent to Inequality~\eqref{eq: alpha_n/alpha_n+1_2}. Both \eqref{eq: alpha_n/alpha_n+1_1} and \eqref{eq: alpha_n/alpha_n+1_2} imply that there exists $n_0>1$ such that $t_{i,j}'(y_0)>0$ for any $j\geq 1$ and $i\geq n_0$. The harmonicity then implies that $t_{i,j}'(y_0)>0$ for any $i,j\geq 1$ and $i+j\geq n_0+1$.
\end{proof}

We want to point out that Remark~\ref{rem: positive harmonic func} also holds true for the harmonic function $t_{i,j}(y_0)$, that is, $t_{i,j}(y_0)>0$ for all $i,j\geq 1$ if there exist $k,\ell \geq 0$ such that $k+\ell\geq 1$ and $p_{k,\ell}>0$.

\section{Explicit expression for walks with small steps}
This section aims at giving an explicit expression for all terms appearing in the harmonic functions \eqref{eq: h(i,j)}, in the case where the random walks only have small jumps, that is, the positive transition probabilities can only be in the set $\{ p_{-1,1}, p_{1,-1}, p_{1,0}, p_{0,1}, p_{1,1} \}$, see Figure~\ref{fig:step_sets}.

The following lemma presents a uniformization of the zero set of $K(\alpha,\beta)$. 
\begin{lem}
\label{lem: uniformization}
	One has
	\begin{equation*}
		\{(\alpha,\beta)\in\mathbb{C}^2:K(\alpha,\beta)=0\} = \left\{(\alpha(s),\beta(s)):s\in\mathbb{C}\right\},
	\end{equation*}
	where
	\begin{equation*}
		\alpha(s)^{-1} := \frac{\sqrt{b^2-ac}}{2a}\left(s+\frac{1}{s}\right)+\frac{b}{a} \quad \text{and}\quad
		\beta(s)^{-1} := \frac{\sqrt{{\widehat b^2}-a\widehat{c}}}{2a}  \left(\rho s +  \frac{1}{\rho s} \right) + \frac{\widehat{b}}{a},
	\end{equation*}
	and $a:= 1-4p_{-1,1}p_{1,-1}$, $\rho:= \sqrt{\frac{1+\sqrt{a}}{1-\sqrt{a}}}$, and finally
	\begin{align*}
		&b:= p_{0,1}+2p_{-1,1}p_{1,0},
		&c:=  p_{0,1}^2 - 4p_{-1,1}p_{1,1},\\
		& \widehat{b}:= p_{1,0}+2p_{1,-1}p_{0,1}, &\widehat{c}:=p_{1,0}^2 - 4p_{1,-1}p_{1,1}.
	\end{align*}
	We further have the involutions $\alpha(s)=\alpha(1/s)$ and $\beta(s) = \beta(1/(\rho^2 s))$. 
\end{lem}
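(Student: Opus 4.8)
The plan is to exhibit $\{K=0\}$ as a rational (genus $0$) curve and to write down its uniformization by hand. First I would compute the kernel explicitly: for the small-step singular models the surviving monomials are $\alpha^{2},\beta^{2},\alpha^{2}\beta,\alpha\beta^{2},\alpha^{2}\beta^{2},\alpha\beta$, so
\[
K(\alpha,\beta)=(p_{1,-1}+p_{1,0}\beta+p_{1,1}\beta^{2})\,\alpha^{2}+(p_{0,1}\beta^{2}-\beta)\,\alpha+p_{-1,1}\beta^{2},
\]
and symmetrically $K$ is a quadratic in $\beta$ with coefficients polynomial in $\alpha$. A short computation shows that the discriminant of the first quadratic is $\beta^{2}(c\beta^{2}-2b\beta+a)$ and that of the second is $\alpha^{2}(\widehat c\alpha^{2}-2\widehat b\alpha+a)$, with $a,b,c,\widehat b,\widehat c$ precisely the constants of the statement; the square factors $\alpha^{2},\beta^{2}$ carry no ramification, so each coordinate projection $\{K=0\}\to\mathbb P^{1}$ is $2$-to-$1$, branched over the two roots of the relevant quadratic. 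By Riemann--Hurwitz the curve has genus $0$, hence admits a rational parametrization, and the task is to make it explicit.

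Second, I would rationalise the square roots. From the first quadratic, $\alpha=\dfrac{-(p_{0,1}\beta^{2}-\beta)\pm\beta\sqrt{c\beta^{2}-2b\beta+a}}{2(p_{1,-1}+p_{1,0}\beta+p_{1,1}\beta^{2})}$, so $\alpha$ becomes rational once $\sqrt{c\beta^{2}-2b\beta+a}$ is. Setting $v=\beta^{-1}$ turns $c\beta^{2}-2b\beta+a$ into $v^{-2}(av^{2}-2bv+c)$, and completing the square gives $av^{2}-2bv+c=a\big(v-\tfrac ba\big)^{2}-\tfrac{b^{2}-ac}{a}$. The conic $aX^{2}-Y^{2}=\tfrac{b^{2}-ac}{a}$ is parametrized by the Joukowski-type map $X=\tfrac{1}{2\sqrt a}\sqrt{\tfrac{b^{2}-ac}{a}}\,(s+s^{-1})$, $Y=\tfrac12\sqrt{\tfrac{b^{2}-ac}{a}}\,(s-s^{-1})$, which yields an expression of the form $\tfrac{\sqrt{b^{2}-ac}}{2a}(s+s^{-1})+\tfrac ba$ for the inverse of one coordinate; running the same recipe on $\widehat c\alpha^{2}-2\widehat b\alpha+a$, with its own parameter, gives the companion expression $\tfrac{\sqrt{\widehat b^{2}-a\widehat c}}{2a}(\sigma+\sigma^{-1})+\tfrac{\widehat b}{a}$ for the inverse of the other coordinate, the assignment of these two formulas and of the pairs $(b,c),(\widehat b,\widehat c)$ to $\alpha^{-1}$ and $\beta^{-1}$ being exactly as in the statement. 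Here one must know that $a\ne0$, $b^{2}-ac\ne0$ and $\widehat b^{2}-a\widehat c\ne0$, i.e.\ that the two quadratics have simple roots and the kernel is genuinely of genus $0$; this follows from the standing Assumptions~\ref{it:norm}--\ref{it:Laplace} excluding the degenerate reducible cases, and over $\mathbb C$ the choice of square roots is immaterial.

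Third — and this is the delicate point — I would glue the two one-variable parametrizations. They describe the same irreducible curve, so $\sigma$ is a Möbius function of $s$; since rescaling $s\mapsto\lambda s$ leaves the first Joukowski form unchanged, it is enough to determine $\sigma$ up to such a normalisation. Concretely, substitute $\alpha=\alpha(s)$ into the quadratic for $\beta$, use the conic identity to write $\sqrt{\widehat c\,\alpha(s)^{2}-2\widehat b\,\alpha(s)+a}$ as an explicit rational function of $s$, solve for $\beta$, and match the outcome with the target form of $\beta^{-1}$; tracking the behaviour at $s\to0,\infty$ (where $\alpha\to0$) and at the ramification points $s=\pm1$ fixes the normalisation and produces $\sigma=\rho s$ with $\rho=\sqrt{(1+\sqrt a)/(1-\sqrt a)}$. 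Equivalently, and more cleanly, one may simply \emph{postulate} the two Joukowski forms with parameters $s$ and $\rho s$, substitute both into $K(\alpha,\beta)=0$, clear denominators, and verify the resulting Laurent-polynomial identity in $s$ directly from the definitions of $a,b,c,\widehat b,\widehat c,\rho$; this reduces the lemma to a finite symbolic check. Either way, the main obstacle is exactly this bookkeeping: keeping the $\pm$ branch choices consistent between the two projections and extracting the precise constant $\rho$.

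Finally, surjectivity onto $\{(\alpha,\beta)\in\mathbb C^{2}:K(\alpha,\beta)=0\}$ is then automatic: the parametrization extends to a non-constant morphism $\mathbb P^{1}\to\mathbb P^{1}\times\mathbb P^{1}$, whose image is closed and irreducible, hence equals the projective closure of $\{K=0\}$, so every affine point is attained; alternatively one inverts directly, recovering $\{s,s^{-1}\}$ from a given value of $\alpha$ and observing that $\beta(s)$ and $\beta(s^{-1})$ are the two $\beta$-values over it. The involutions $\alpha(s)=\alpha(1/s)$ and $\beta(s)=\beta(1/(\rho^{2}s))$ are then immediate from the formulas, being the invariances $s\mapsto s^{-1}$ of $s+s^{-1}$ and $\rho s\mapsto(\rho s)^{-1}$ of $\rho s+(\rho s)^{-1}$; they are the Galois involutions of the two coordinate projections, and their composition $s\mapsto\rho^{2}s$ is the group of the walk.
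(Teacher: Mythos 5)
Your proposal is correct and follows essentially the same route as the paper: pass to reciprocal coordinates, view the kernel as a quadratic in each variable, rationalise the two degree-two discriminants by a Joukowski (conic) substitution after completing the square, and pin down the relation $\sigma=\rho s$ between the two parameters, leaving the final identity as a finite symbolic verification --- exactly the step the paper itself only asserts. The extras you add (genus $0$ via Riemann--Hurwitz, surjectivity via properness of the induced map $\mathbb P^1\to\mathbb P^1\times\mathbb P^1$) are sound and go slightly beyond the paper; the only loosely worded point is the claim that $s\mapsto\lambda s$ ``leaves the first Joukowski form unchanged'' (it preserves the parametrised conic as a set, not the value $\alpha(s)$), but your fallback of substituting both postulated forms into $K(\alpha,\beta)=0$ and checking the Laurent identity makes this immaterial.
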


\begin{proof}
	We will find a rational uniformization of the algebraic curve
	\begin{equation}\label{eq: K(1/alpha,1/beta)=0}
		\alpha^2\beta^2 K\left(\frac{1}{\alpha},\frac{1}{\beta}\right)=0.
	\end{equation}
	We first rewrite
	\begin{align*}
		\alpha^2\beta^2 K\left(\frac{1}{\alpha},\frac{1}{\beta}\right)&= p_{1,1} + p_{1,0}\beta + p_{0,1}\alpha + p_{1,-1}\beta^2 + p_{-1,1}\alpha^2 - \alpha\beta\\
		&= \left( p_{1,-1} \right)\beta^2 + \left( -\alpha + p_{1,0}  \right)\beta + \left( p_{-1,1}\alpha^2 + p_{0,1}\alpha + p_{1,1} \right).
	\end{align*}
	The above polynomial admits the discriminant
	\begin{align*}
		\delta(\alpha)&:= \left( -\alpha + p_{1,0}  \right)^2 - 4p_{1,-1}\left( p_{-1,1}\alpha^2 + p_{0,1}\alpha + p_{1,1} \right)\\
		&= a\alpha^2 -2b\alpha +c = \frac{b^2-ac}{4a}\left[\left(2\frac{a\alpha-b}{\sqrt{b^2-ac}}\right)^2 -4  \right],
	\end{align*}
	where $a,b,c$ are defined in Lemma~\ref{lem: uniformization}, and
	\begin{equation*}
	    b^2-ac = 4p_{1,-1}\bigl( (1-4p_{-1, 1}p_{1, -1})p_{1, 1}+p_{1, 0}^2 p_{-1, 1} + p_{0, 1}^2 p_{1, -1}+p_{0, 1} p_{1, 0} \bigr) >0.
	\end{equation*}
	By setting
	\begin{equation*}
		2\frac{a\alpha-b}{\sqrt{b^2-ac}} = s+\frac{1}{s},\quad\text{that is,}\quad \alpha = \frac{\sqrt{b^2-ac}}{2a}\left(s+\frac{1}{s}\right)+\frac{b}{a},\quad s\in\mathbb{C},
	\end{equation*}
	one can easily find $\beta$ satisfying Eq.~\eqref{eq: K(1/alpha,1/beta)=0} as follows:
	\begin{equation*}
		\beta = \frac{\alpha(s)-p_{0,1}+\sqrt{\delta\circ\alpha(s)}}{2p_{1,-1}}=\frac{\sqrt{\widehat{b}^2-a\widehat{c}}}{2a}  \left(\rho s +  \frac{1}{\rho s} \right) + \frac{\widehat{b}}{a},
	\end{equation*}
	where $\widehat{b}, \widehat{c}, \rho$ are defined in Lemma~\ref{lem: uniformization}. The proof is then complete.
\end{proof}

By the above lemma, one can describe the curve $\mathcal{K}$ in \eqref{eq:curve_K} as
\begin{equation*}
    \mathcal{K} = \{ (\alpha(s),\beta(s)) : s>0 \}.
\end{equation*}

We remark that at $s=1$, the discriminant $\delta\circ\alpha(1) =0$, then $\beta(1)$ is the double root of Eq.~\eqref{eq: K(1/alpha,1/beta)=0} as an equation of $\beta$. Similarly, at $s=1/\rho$, $\alpha(1/\rho)$ is a double root of Eq.~\eqref{eq: K(1/alpha,1/beta)=0} as an equation of $\alpha$. Thus, defining $\mathcal{K}_0$ as the analogue of $\mathcal G_0$ (see \eqref{eq:def_G_0}) before the exponential change of variable, one easily deduces the description
\begin{equation*}
    \mathcal{K}_0=\{(\alpha(s),\beta(s)):s\in(1/\rho,1)\}.
\end{equation*}

\begin{prop}\label{prop: alpha_n,beta_n small step walk}
    Any sequence $\{(\alpha_n,\beta_n)\}_{n\in\mathbb{Z}}$ satisfying \eqref{eq: a_0, b_0 in mathcal_G}--\eqref{eq: a_n, b_n, n<0} and \eqref{eq:def_a_alpha_b_beta}, with $(\alpha_0,\beta_0)\in\mathcal{K}_0$, can be described explicitly as
    \begin{equation*}
	    \{(\alpha_n,\beta_n)\}_{n\in\mathbb{Z}}=\left\{\left(\alpha(\rho^{2n}s),\beta(\rho^{2n}s)\right)\right\}_{n\in\mathbb{Z}},
    \end{equation*}
    with $s\in(1/\rho,1)$. As a consequence, the harmonic function formed by $(\alpha_n,\beta_n)_{n\in\mathbb{Z}}$ in \eqref{eq: h(i,j)} becomes
    \begin{equation*}
	    h(i,j) = \sum_{k\in\mathbb{Z}}  \alpha(\rho^{2n}s)^i\beta(\rho^{2n}s)^j-\alpha(\rho^{2n+2}s)^i\beta(\rho^{2n}s)^j,\qquad i,j\geq 1.
\end{equation*}
\end{prop}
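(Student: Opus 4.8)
\textbf{Proof proposal for Proposition~\ref{prop: alpha_n,beta_n small step walk}.}

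The plan is to use the uniformization from Lemma~\ref{lem: uniformization} to turn the recursion \eqref{eq: a_n, b_n, n>0}--\eqref{eq: a_n, b_n, n<0} into an explicit action on the parameter $s$. Recall that a sequence satisfying \eqref{eq: a_0, b_0 in mathcal_G}--\eqref{eq: a_n, b_n, n<0} is characterized by: consecutive points alternately share a $\beta$-coordinate (going from $(\alpha_n,\beta_n)$ to $(\alpha_{n+1},\beta_n)$) and an $\alpha$-coordinate (going from $(\alpha_{n+1},\beta_n)$ to $(\alpha_{n+1},\beta_{n+1})$), and at each step one must pick the \emph{other} root, not the current point. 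In the $s$-coordinate, by Lemma~\ref{lem: uniformization} the involution fixing $\alpha$ is $s\mapsto 1/s$ and the involution fixing $\beta$ is $s\mapsto 1/(\rho^2 s)$. So first I would record: if $(\alpha_0,\beta_0)=(\alpha(s),\beta(s))$ with $s\in(1/\rho,1)$, then the point sharing the $\alpha$-coordinate but distinct from it is $(\alpha(1/s),\beta(1/s))$, and composing the two involutions $s\mapsto 1/(\rho^2 s)\mapsto s/(\rho^2\cdot(1/s)\cdot\text{(…)})$—more precisely $s \overset{\text{fix }\beta}{\longmapsto} 1/(\rho^2 s) \overset{\text{fix }\alpha}{\longmapsto} \rho^2 s$—gives the shift $s\mapsto\rho^2 s$. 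Carrying this out carefully in both directions shows $(\alpha_n,\beta_n)=(\alpha(\rho^{2n}s),\beta(\rho^{2n}s))$ for all $n\in\mathbb{Z}$.

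The key steps, in order, are: (1) identify the two involutions with the two ``share a coordinate, switch roots'' moves, using that $\alpha(\cdot)$ has degree $2$ in $s$ with the two preimages of a generic value swapped by $s\mapsto 1/s$, and similarly $\beta(\cdot)$ with $s\mapsto 1/(\rho^2 s)$; (2) compose them to get that advancing $n\mapsto n+1$ multiplies $s$ by $\rho^2$, and decreasing $n$ divides by $\rho^2$ (this needs the observation that going backwards from $(\alpha_{-(n-1)},b_{-(n-1)})$ via \eqref{eq: a_n, b_n, n<0} uses the same two involutions in the reverse order, whose composition is $s\mapsto s/\rho^2$); (3) check that $s\in(1/\rho,1)$ is exactly the parametrization of $\mathcal{K}_0$, which was already established in the displayed line $\mathcal{K}_0=\{(\alpha(s),\beta(s)):s\in(1/\rho,1)\}$ just before the proposition, so the hypothesis $(\alpha_0,\beta_0)\in\mathcal{K}_0$ translates to $s\in(1/\rho,1)$; (4) substitute into \eqref{eq: h(i,j)} to read off the closed form for $h(i,j)$, which is immediate once the parametrization of the orbit is known.

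The main obstacle I anticipate is step (2): getting the \emph{signs/directions} of the two involutions right and making sure the ``pick the other root'' constraint in \eqref{eq: a_n, b_n, n>0}--\eqref{eq: a_n, b_n, n<0} is not accidentally violated at the two special values $s=1$ and $s=1/\rho$, where $\alpha(s)$ resp.\ $\beta(s)$ has a double point. Since $s_0\in(1/\rho,1)$ and $\rho>1$, the forward orbit $\rho^{2n}s_0$ stays away from $1/\rho$ and drifts toward $+\infty$, while the backward orbit drifts toward $0$, so one never lands on a branch point and the two roots are always genuinely distinct; I would include a short remark to that effect. One also has to confirm that the branch choice in the construction \eqref{eq:recursion_0_n}--\eqref{eq:recursion_negative_n} (namely $\widehat f$, $\widehat g$ picking the ``outer'' inverse branches that decrease along the tails) corresponds on the $s$-side to multiplication by $\rho^2$ rather than by $\rho^{-2}$; this is a matter of comparing monotonicity of $s\mapsto\alpha(s),\beta(s)$ on $(1,\infty)$ with the monotonicity statements for $\widehat f,\widehat g$ in Lemmas~\ref{lem:definition_fg} and~\ref{lem: derivatives of some functions}. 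Everything else — absolute convergence of the resulting series, harmonicity, positivity — is inherited directly from Lemma~\ref{lem:harmonic_expression}, so no new analytic work is needed.
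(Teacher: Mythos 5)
Your proposal is correct and follows essentially the same route as the paper: the paper's proof is exactly the induction you describe, using the two involutions $s\mapsto 1/s$ (fixing $\alpha$) and $s\mapsto 1/(\rho^2 s)$ (fixing $\beta$) from Lemma~\ref{lem: uniformization}, whose composition gives the shift $s\mapsto\rho^2 s$ on the uniformizing parameter. Your extra remarks about avoiding the branch points $s=1$, $s=1/\rho$ are a reasonable precaution but not needed beyond what the paper already implicitly uses.
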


\begin{proof}
We will prove the proposition by induction argument. Since $(\alpha_0,\beta_0)\in\mathcal{K}_0$, then
\begin{equation*}
    (\alpha_0,\beta_0) = (\alpha(s),\beta(s)),
\end{equation*}
with $s\in (1/\rho,1)$. Now assume that $(\alpha_n,\beta_n) = (\alpha(\rho^{2n}s),\beta(\rho^{2n}s))$ with $n\in\mathbb{Z}$. Since $\beta_{n} = \beta(\rho^{2n}s) = \beta\left( 1/(\rho^{2n}s)\right)$
by the involution and $(\alpha_{n+1},\beta_{n})$ is a root of $K(\alpha,\beta)$, then
\begin{equation*}
    \alpha_{n+1} = \alpha\left(\frac{1}{\rho^{2n+2}s}\right) = \alpha(\rho^{2n+2}s).
\end{equation*}
Since $(\alpha_{n+1},\beta_{n+1})$ is also a root of $K(\alpha,\beta)$, then
$    \beta_{n+1} = \beta(\rho^{2n+2}s)$.
Similarly, the involutions of Lemma~\ref{lem: uniformization} also imply that
    $\alpha_{n-1} = \alpha(\rho^{2n-2}s)$ and $
    \beta_{n-1} = \beta(\rho^{2n-2}s)$.
The proof is then complete.
\end{proof}

\subsection*{A concrete example} We now return to our simplest walk with the transition probabilities $p_{1,1}=p_{1,-1}=p_{-1,1}=1/3$ and provide a full proof of the formula \eqref{eq:expression_escape_Fibonacci}. The zero set of the associated kernel
\begin{equation*}
    K(\alpha,\beta) = \frac{1}{3}(x^2y^2+x^2+y^2)-xy
\end{equation*}
admits the uniformization presented in Lemma~\ref{lem: uniformization}, with
\begin{equation*}
    \alpha(s) = \frac{\sqrt{5}}{s+1/s},\quad
    \beta(s) = \frac{\sqrt{5}}{\rho s+1/(\rho s)} \quad \text{and} \quad \rho = \frac{3+\sqrt{5}}{2}.
\end{equation*}
The sequence $\{(\alpha_n,\beta_n)\}_{n\in \mathbb{Z}}$ in Proposition~\ref{prop: alpha_n,beta_n small step walk} can be expressed as functions of $s\in(1/\rho,1)$:
\begin{equation*}
    (\alpha_n(s),\beta_n(s)) = \left( \frac{\sqrt{5}}{\rho^{2n}s+1/(\rho^{2n}s)},\frac{\sqrt{5}}{\rho^{2n+1}s+1/(\rho^{2n+1}s)}\right),\qquad n\in\mathbb{Z}.
\end{equation*}
Consequently, this leads to harmonic functions depending on $s$, denoted by
\begin{equation*}
    h_s(i,j):= \sum_{n\in\mathbb{Z}}\alpha_n(s)^i\beta_n(s)^j-\alpha_{n+1}(s)^i\beta_n(s)^j.
\end{equation*}

Let us look at two specific examples. Firstly, the survival probability \eqref{eq:expression_escape_Fibonacci} is the harmonic function associated to the initial point \begin{equation*}
    \left(\alpha_0\left(\frac{\sqrt{5}-1}{2}\right),\beta_0\left(\frac{\sqrt{5}-1}{2}\right)\right)=(1,1)\in\mathcal{K}_0,
\end{equation*}
and can be expressed explicitly as
\begin{equation*}
    \mathbb{P}(\tau_{(i,j)}=\infty) = h_{\frac{\sqrt{5}-1}{2}}(i,j) = \cdots - \frac{1}{5^i 13^j} +\frac{1}{5^i 2^j} -\frac{1}{1^i 2^j} + \frac{1}{1^i 1^j} -\frac{1}{2^i 1^j} + \frac{1}{2^i 5^j} - \frac{1}{13^i 5^j} + \cdots
\end{equation*}
In this formula, the sequence
\begin{equation*}
    \left\{\frac{\rho^ns+\frac{1}{\rho^ns}}{\sqrt{5}}\right\}_{n\geq 0} = \{1, 1, 2, 5, 13, 34, 89, 233, 610, 1597,\ldots\}
\end{equation*}
is a bisection of Fibonacci numbers.

We now give the expression for the normalized positive harmonic function constructed in Proposition~\ref{prop:boundary_expression_hf}. Recall that such a function is obtained by differentiating the harmonic function $h_s(i,j)$ with respect to the variable $\log\beta_0(s)$ and evaluating as $b_0\to \log\beta_0(1)$. (The logarithms appear in our computation because Proposition~\ref{prop:boundary_expression_hf} was stated under $(\log\alpha,\log\beta)$-coordinates instead of $(\alpha,\beta)$-coordinates.) We have:
\begin{equation*}
    \frac{h_s(i,j)}{\log \beta_0(s) - \log \beta_0(1)} \overset{s\to 1^-}{\longrightarrow} \left.\frac{d h_s(i,j)}{ds}\frac{ds}{d\left(\log\beta_0(s)\right)}\right|_{s=1},
\end{equation*}
where
\begin{align*}
    & \left.\frac{ds}{d\left(\log\beta_0(s)\right)}\right|_{s=1} = \frac{\beta_0(1)}{\beta_0'(1)} = -\frac{3}{\sqrt{5}},\\
    &\left.\frac{d h_s(i,j)}{ds}\right|_{s=1} \\
    =& \sum_{n\in\mathbb{Z}}i\left( \alpha_n'(1)\alpha_n(1)^{i-1} - \alpha_{n+1}'(1)\alpha_{n+1}(1)^{i-1} \right)\beta_n(1)^j + j\left(\alpha_n(1)^i-\alpha_{n+1}(1)^i\right)\beta_n'(1)\beta_n(1)^{j-1},\\
    &(\alpha_n(1),\beta_n(1)) = \left(  \frac{\sqrt{5}}{\rho^{2n}+\frac{1}{\rho^{2n}}}, \frac{\sqrt{5}}{\rho^{2n+1}+\frac{1}{\rho^{2n+1}}} \right),\\
    & (\alpha_n'(1),\beta_n'(1)) =  \left(  -\frac{\sqrt{5}\left(\rho^{2n}-\frac{1}{\rho^{2n}}\right)}{\left(\rho^{2n}+\frac{1}{\rho^{2n}}\right)^2},- \frac{\sqrt{5}\left(\rho^{2n+1}-\frac{1}{\rho^{2n+1}}\right)}{\left(\rho^{2n+1}+\frac{1}{\rho^{2n+1}}\right)^2} \right),\quad n\in\mathbb{Z}.
\end{align*}
In these formulas, we want to point out that the sequence
\begin{equation*}
    \left\{\rho^n+\frac{1}{\rho^n}\right\}_{n\geq 0} = \{2, 3, 7, 18, 47, 123, 322, 843, 2207,\ldots\} = \{L_{2n}\}_{n\geq 0}
\end{equation*}
is a bisection of Lucas numbers $\{L_n\}_{n\geq 0}$: 
\begin{equation*}
    L_0 = 2,\quad L_1=1,\quad L_n = L_{n-1} + L_{n-2},\quad n\geq 2,
\end{equation*}
see \href{https://oeis.org/A005248}{A005248}. Moreover, the sequence
\begin{equation*}
    \{u_n\}_{n\geq 0} = \left\{\sqrt{5}\left(\rho^n-\frac{1}{\rho^n}\right)\right\}_{n\geq 0} = \{0, 5, 15, 40, 105, 275, 720, 1885, 4935,\ldots\}
\end{equation*}
satisfies the recurrence relation (see \href{https://oeis.org/A201157}{A201157}):
\begin{equation*}
    u_0 = 0, \quad u_1 = 5, \quad u_n = 3u_{n-1} - u_{n-2},\quad n\geq 2.
\end{equation*}


\part{Green functions and Martin boundary for singular random walks in the quadrant}
\label{part:Martin_boundary}

In this part, we describe the Martin boundary inside the cone and prove that it coincides with the set of harmonic functions constructed in Part~\ref{part:construction}.

\section{Presentation of the results}

\subsection{A brief account on Green functions and Martin boundary theory}
Let us first recall the definition of the Green function of the random walk inside the cone $\mathbb{Z}_{>0}^2$, which plays an important role in the construction of the Martin boundary. For $x,y\in\mathbb{Z}_{>0}^2$, set 
\begin{equation*}
    G(x,y)=\sum_{n\geq 1}\mathbb{P}(x+S(n)=y,\tau_{x}>n)\quad\text{and}\quad \widetilde{G}(x,y)=\sum_{n\geq 1}\mathbb{P}(x+S(n)=y),
\end{equation*}
where $\tau_x=\inf\{n\geq 0: x+S(n)\not\in \mathbb{Z}_{>0}^2\}\leq \infty$, and set $x_0=(1,1)$. The Martin kernel associated to the reference point $x_0$ is then defined as 
\begin{equation}
\label{eq:definition_Martin_kernel}
    K_M(x,y)=\frac{G(x,y)}{G(x_0,y)}.
\end{equation}
The Martin boundary $\partial_M^{\mathbb{Z}_{>0}^2}S$ of the random walk $S$ killed outside of $\mathbb{Z}_{>0}^2$ is the boundary in the topological space $\{f:\mathbb{Z}_{>0}^2\rightarrow \mathbb{R}\}$ (with the topology of point-wise convergence) of the set of maps $\left\{K_M(\,\cdot\,,y): y\in \mathbb{Z}_{>0}^2\right\}$. The space $\partial_M^{\mathbb{Z}_{>0}^2}S$ is a compact measurable subspace of the set of real functions on $\mathbb{Z}_{>0}^2$, see  \cite{Do-59}, and for any non-negative function $h$ on $\mathbb{Z}_{> 0}^2$ which is harmonic with respect to $S$ killed outside $\mathbb{Z}_{> 0}^2$, there exists a positive measure $\mu_h$ on $\partial_M^{\mathbb{Z}_{>0}^2}S$ such that 
\begin{equation*}
    h(\,\cdot\,)=\int_{\partial_M^{\mathbb{Z}_{>0}^2}S}K_M(\,\cdot\,,\omega)d\mu_h(\omega).
\end{equation*}
One sees from \eqref{eq:definition_Martin_kernel} that a convenient way to compute the asymptotics of $K_M(\,\cdot\,,y)$ as $y$ goes to infinity is to first study the behavior of $G(x,y)$ when $y$ goes to infinity in the cone. 

We set $\Sigma=\mathbb{R}_{\geq 0}^2\cap \mathbb{S}^1$, and for $u\in \Sigma$, denote by $\phi(u)$ the unique solution of 
\begin{equation}
\label{eq:new_drift}
    \mathbb{E}\bigl(\exp\langle \phi(u), S(1)\rangle\bigr)=1 \quad \text{and}\quad \mathbb{E}\bigl(S(1)\exp\langle\phi(u), S(1)\rangle\bigr)=r_uu:=\mu^u
\end{equation}
for some $r_u>0$. Remark that $\phi$ is bijective from $\Sigma$ to $\overline{\mathcal{G}_0}$ (whose definition may be found in \eqref{eq:def_G_0}), with
\begin{equation*}
    \phi^{-1}(a,b)=\frac{\nabla K(a,b)}{\Vert \nabla K(a,b)\Vert}
\end{equation*}
for $(a,b)\in \overline{\mathcal{G}_0}$. For $u\in \Sigma$, we then write $\mathbb{P}_u$ for the probability measure on $\{S(n)\}_{n\geq 1}$ given by the transition probabilities 
\begin{equation*}
    \mathbb{P}_u(S(1)\in A)=\mathbb{E}\bigl(\exp\langle\phi(u), S(1)\rangle\mathbf{1}_{S(1)\in A}\bigr)
\end{equation*} 
for $A\subset \mathbb{Z}^2$, and we denote by $\mathbb{E}_u$ the corresponding expectation. We then write $\Sigma^u$ for the covariance matrix of $S(1)$ under $\mathbb{P}_u$ (recall that we denote by $\mu^u$ the drift of $S(1)$ under $\mathbb{P}_u$, see \eqref{eq:new_drift}).

\subsection{Statement of the main result}
The main result of this section is the following theorem, which gives an asymptotics of $G(x,y)$ when $y$ goes to infinity along any direction in the quarter plane. Denote by $e_1=(1,0)$ and $e_2=(0,1)$ the two standard basis vectors.
\begin{thm}\label{thm:asymptotic_inside_green}
For any $x\in \mathbb{Z}_{>0}^2$ and $\{y(n)\}_{n\geq 1}$ with $\vert y(n)\vert\xrightarrow[]{n\rightarrow \infty} \infty$,
\begin{itemize}
\item if $\frac{y(n)}{\vert y(n)\vert}=u_n$ with $\lim u_n:=u_0\in \Sigma\setminus \{e_1,e_2\}$, then
\begin{equation*}
    \lim_{n\rightarrow \infty}\sqrt{\vert y(n)\vert }e^{-\langle\phi(u_n(y(n))),x-y(n)\rangle}G(x,y(n)) =  A(u_0)\mathbb{P}_{u_0}(\tau_x=\infty);
\end{equation*}
\item if $\frac{y(n)}{\vert y(n)\vert}=u_n$ with $\lim u_n=e_i$, $i\in \{1,2\}$, then, with $\bar{\imath}=3-i$,
\begin{equation*}
    \lim_{n\rightarrow \infty}\frac{\vert y(n)\vert^{3/2} e^{-\langle\phi(u(y(n))),x-y(n)\rangle}}{V(y_i(n))}G(x,y(n)) = A(e_i)\bigl(x_{\bar{\imath}}-\mathbb{E}_{e_i}(x_{\bar{\imath}}+S_{\bar{\imath}}(\tau_x))\bigr),
\end{equation*}
\end{itemize}
with $u\mapsto A(u)$ a continuous, positive function on $\Sigma$ and $V$ introduced in Proposition~\ref{prop:equiv_proba_horizon}.
\end{thm}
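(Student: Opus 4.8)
The plan is to obtain the local limit asymptotics of $G(x,y)$ by the classical strategy of tilting the walk and then applying a local central limit theorem adapted to the cone. First I would fix a direction $u_0\in\Sigma$, perform the exponential change of measure governed by $\phi(u)$ as in \eqref{eq:new_drift}, so that under $\mathbb{P}_u$ the increment $S(1)$ has mean $\mu^u=r_uu$ pointing in the direction $u$. The standard identity
\begin{equation*}
    G(x,y)=e^{\langle \phi(u),y-x\rangle}\sum_{n\geq 1}\mathbb{P}_u\bigl(x+S(n)=y,\tau_x>n\bigr)
\end{equation*}
(with $u$ chosen depending on $y$, say $u=u_n$) reduces the problem to estimating $\sum_n \mathbb{P}_u(x+S(n)=y,\tau_x>n)$ for a walk with drift aimed at $y$. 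The dominant contribution comes from $n\approx |y|/r_{u_0}$, and one expects a Gaussian local limit theorem in the cone: $\mathbb{P}_u(x+S(n)=y,\tau_x>n)$ should factor asymptotically as (a harmonic factor depending on $x$) $\times$ (a harmonic factor depending on $y$) $\times$ (a Gaussian density term). Summing over $n$ turns the $n^{-1}$ or $n^{-2}$ type decay of the cone local limit theorem into the $|y|^{-1/2}$ (interior direction) or $|y|^{-3/2}$ (axis direction) prefactor in the statement, with the $x$-dependence collapsing to $\mathbb{P}_{u_0}(\tau_x=\infty)$, respectively $x_{\bar\imath}-\mathbb{E}_{e_i}(x_{\bar\imath}+S_{\bar\imath}(\tau_x))$ — the latter being (a multiple of) the minimal harmonic function associated to a boundary direction, which for a walk drifting parallel to an axis degenerates to the one-dimensional renewal-type harmonic function, whence the factor $V(y_i(n))$ from Proposition~\ref{prop:equiv_proba_horizon}.

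Concretely I would carry out the following steps. \emph{Step 1:} record the tilting identity and the properties of $\phi$ and $\mu^u$, including uniformity of the relevant quantities as $u$ ranges over a compact neighbourhood of $u_0$ (this is where Assumption~\ref{it:Laplace} on exponential moments near $\mathcal{G}$ enters). \emph{Step 2:} for $u_0\in\Sigma\setminus\{e_1,e_2\}$ (genuinely interior drift), invoke (or adapt from \cite{DeWa-15,IRLo-10}) a local limit theorem for the tilted walk killed outside the quadrant, of the shape
\begin{equation*}
    \mathbb{P}_u(x+S(n)=y,\tau_x>n)\sim \frac{C}{n}\,\mathbb{P}_u(\tau_x=\infty)\,\mathbb{P}_u^{\mathrm{rev}}(\tau_y=\infty)\,g_n\bigl(y-n\mu^u\bigr),
\end{equation*}
with $g_n$ the Gaussian density of covariance $n\Sigma^u$; then sum over $n$ by Laplace's method around $n_*=|y|/r_{u_0}$, absorbing the constant and the reversed-walk survival probability into $A(u_0)$ (using that the latter converges to a positive constant along the drift direction). \emph{Step 3:} handle the boundary case $u_0=e_i$: here the drift becomes asymptotically parallel to axis $i$, the walk no longer escapes the quarter plane "in the bulk" but rather behaves like a one-dimensional walk in the $i$-coordinate with the $\bar\imath$-coordinate performing a (killed, mean-reverting) fluctuation; the survival factor in the $x$-variable then linearizes to $x_{\bar\imath}-\mathbb{E}_{e_i}(x_{\bar\imath}+S_{\bar\imath}(\tau_x))$, and the $y$-survival probability contributes the slowly varying factor $V(y_i(n))$; the correct normalization then carries an extra power $|y|^{-1}$ compared to the interior case, giving $|y|^{-3/2}$. \emph{Step 4:} check continuity and positivity of $u\mapsto A(u)$ on all of $\Sigma$, including at $e_1,e_2$, by matching the two regimes.

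\textbf{Main obstacle.} The delicate point is the transition between the interior and boundary directions, i.e.\ the uniformity of the local limit theorem as $u_n\to e_i$. In that limit the tilted drift $\mu^{u_n}$ flattens against the axis, the escape mechanism from the cone changes nature, and the constants in a naive interior local limit theorem blow up; one must instead prove a local limit theorem that is uniform in the direction and correctly interpolates, or treat a boundary layer $|y_{\bar\imath}(n)|=O(\sqrt{|y(n)|})$ separately using the one-dimensional renewal structure in the $i$-coordinate. This is precisely where our singular-walk hypotheses (small negative jumps, $p_{-1,1}p_{1,-1}\neq 0$, no pure-axis negative steps) must be used to control the boundary interaction — the paper flags that "the case of a boundary direction needs a particular attention, due to the interaction with the axes" — so the heart of the argument is a careful analysis of the killed walk near one axis, for which I would follow the treatment of the renewal function $V$ and Proposition~\ref{prop:equiv_proba_horizon}, combined with the methods of \cite{DeWa-15} and \cite{DuRaTaWa-22}.
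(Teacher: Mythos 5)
Your overall skeleton (exponential tilting via $\phi(u_n)$, identifying the dominant time scale $n\approx\vert y\vert/r_{u_0}$, and treating the axis directions through the half-plane/renewal structure with the function $V$) is the right one, and you correctly single out the uniformity near $e_1,e_2$ as the hard point. But Step~2 is where your route diverges from the paper's and where it has a concrete problem. You propose to invoke a local limit theorem for the walk killed in the \emph{quadrant} of the product form $\frac{C}{n}\,\mathbb{P}_u(\tau_x=\infty)\,\mathbb{P}_u^{\mathrm{rev}}(\tau_y=\infty)\,g_n(y-n\mu^u)$ and then sum over $n$. First, there is nothing to invoke: \cite{DeWa-15} is for zero drift, and the paper explicitly notes that no version covering this setting existed. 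Second, the formula is wrong as written: under $\mathbb{P}_u$ the reversed walk has drift $-\mu^u$, which points out of the cone, so $\mathbb{P}_u^{\mathrm{rev}}(\tau_y=\infty)=0$; moreover, with $g_n$ a genuine density the right-hand side is of order $n^{-2}$, which after summation over a window of width $O(\sqrt{\vert y\vert})$ yields $\vert y\vert^{-3/2}$ --- the boundary exponent, not the $\vert y\vert^{-1/2}$ of the first bullet. For an interior direction the correct local behaviour is simply $\mathbb{P}_u(x+S(n)=y,\tau_x>n)\sim\mathbb{P}_u(\tau_x=\infty)\,g_n(y-x-n\mu^u)$, with no extra $1/n$ and no $y$-dependent harmonic factor.

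The paper avoids proving any conditional local limit theorem in the quadrant for the interior case. It applies the Markov property at an intermediate time $m_n$ with $\log t_n=o(m_n)$ and $m_n=o(t_n^{1/4})$, writing $G_{u_n}(x,y(n))$ as the contribution of times at most $m_n$, plus $\mathbb{E}_{u_n}\bigl(\widetilde{G}_{u_n}(x+S(m_n),y(n)),\tau_x>m_n\bigr)$, minus a correction on $\{m_n<\tau_x<\infty\}$. The first and third pieces are exponentially negligible by the uniform large-deviation bound (Lemma~\ref{lem:large_deviation_uniform}) and the skip-free exit-probability formulas; the middle piece is handled by the Ney--Spitzer asymptotics of the \emph{free} Green function $\widetilde{G}_u$ (Theorem~\ref{thm:asymptotic_green_Ney_spiter}), the indicator $\{\tau_x>m_n\}$ producing the factor $\mathbb{P}_{u_0}(\tau_x=\infty)$. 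For the axis directions it runs the same decomposition with the half-plane Green function $\widehat{G}^i$ in place of $\widetilde{G}$ (its asymptotics being Proposition~\ref{prop:asymptotic_Green_horizontal_final}), and uses harmonicity of $x_{\bar{\imath}}$ for the skip-free walk killed at the axis to produce $x_{\bar{\imath}}-\mathbb{E}_{e_i}(x_{\bar{\imath}}+S_{\bar{\imath}}(\tau_x))$. To salvage your plan you must either repair Step~2 by actually proving the interior conditional local limit theorem (which in effect requires the same renewal decomposition one level down), or adopt the Green-function-level decomposition, which only needs inputs already available.
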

The proof of this theorem is postponed to Section \ref{Section:proof_main_results}. The main difficulty concerns the case of an asymptotic direction along the boundary axes, where the survival probability vanishes. The latter case is solved by first studying the Green function of the random walk in the half-planes $\mathbb{H}_1:=\mathbb{Z}\times \mathbb{Z}_{>0}$ and $\mathbb{H}_2:=\mathbb{Z}_{>0}\times \mathbb{Z}$. We achieve the latter in Section \ref{Section:halfspace}. To conclude this section, we collect a few useful estimates on the classical random walk on $\mathbb Z^2$.

\subsection{Preliminary estimates, and the Ney and Spitzer theorem}

For all $u\in \Sigma$, we introduce the modified Green kernels
\begin{equation*}
    G_u(x,y)=\sum_{n\geq 1}\mathbb{P}_u(x+S(n)=y,\tau_{x}>n)\quad\text{and}\quad \widetilde{G}_u(x,y)=\sum_{n\geq 1}\mathbb{P}_u(x+S(n)=y).
\end{equation*}
For all $x,y\in \mathbb{Z}_{>0}^2$, we have 
\begin{equation}
\label{eq:relation_Green_function_central_change}
G_u(x,y)=e^{\langle\phi(u), y-x\rangle}G(x,y) \quad\text{and}\quad \widetilde{G}_u(x,y)=e^{\langle\phi(u), y-x\rangle}\widetilde{G}(x,y).
\end{equation}

We recall the following result from Ney and Spitzer \cite[Thm~2]{NeSpi-66}, using our notation $\mu^u=\mathbb{E}_u(S(1))$.
\begin{thm}\label{thm:asymptotic_green_Ney_spiter}
There exists a continuous function $A:\Sigma\rightarrow \mathbb{R}_{>0}$ such that, as $t\to\infty$, uniformly on $u\in \Sigma$ and $x\in \mathbb{Z}^2$ with $\vert x\vert=o(t^{1/2})$,
\begin{equation*}
    \sqrt{t}\widetilde{G}_u(x,\lfloor t\mu^u\rfloor)\rightarrow A(u).
\end{equation*}
\end{thm}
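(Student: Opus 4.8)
The plan is to reduce the statement to the local central limit theorem (LCLT) for $\{S(n)\}_{n\geq 1}$ under the tilted measure $\mathbb{P}_u$, and then to evaluate the resulting sum over $n$ by a Laplace-type argument. Since the unkilled Green kernel is translation invariant, $\widetilde{G}_u(x,y)=\sum_{n\geq 1}\mathbb{P}_u(S(n)=y-x)$, so with $y=\lfloor t\mu^u\rfloor$ one must control $\sum_{n\geq 1}\mathbb{P}_u\bigl(S(n)=\lfloor t\mu^u\rfloor-x\bigr)$. As $\mathbb{E}_u(S(1))=\mu^u=r_uu$ with $r_u>0$, the summand is non-negligible only for $n$ within $O(\sqrt{t})$ of $t$: I would first use an exponential Chebyshev bound on $\mathbb{P}_u(S(n)=z)$, uniform in $u$ over the compact set $\Sigma$ (legitimate by Assumption~\ref{it:Laplace} and the continuity of $u\mapsto\phi(u)$), to show that the ranges $\vert n-t\vert\geq t^{3/4}$ contribute $o(1/\sqrt{t})$ and can be discarded.

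On the remaining range, writing $n=t+s\sqrt{t}$ with $\vert s\vert\leq t^{1/4}$, I would invoke a \emph{uniform} two-dimensional LCLT: there is a constant $\kappa\geq 1$ depending only on the lattice structure of the support of $S(1)$ (and not on $u$) such that
\begin{equation*}
\mathbb{P}_u(S(n)=z)=\frac{\kappa}{2\pi n\sqrt{\det\Sigma^u}}\exp\Bigl(-\tfrac{1}{2n}\langle z-n\mu^u,(\Sigma^u)^{-1}(z-n\mu^u)\rangle\Bigr)+o\bigl(\tfrac{1}{n}\bigr),
\end{equation*}
uniformly over $u\in\Sigma$ and over $z$ (in the relevant coset) with $\vert z-n\mu^u\vert=O(\sqrt{n})$; the uniformity in $u$ stems from the smooth, non-degenerate dependence of $\phi(u),\mu^u,\Sigma^u$ on $u\in\Sigma$, through uniform control of the characteristic functions $\xi\mapsto\mathbb{E}_u(e^{i\langle\xi,S(1)\rangle})$. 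Substituting $z=\lfloor t\mu^u\rfloor-x$ yields $z-n\mu^u=-s\sqrt{t}\,\mu^u-x+O(1)$, so that (using $\vert x\vert=o(\sqrt{t})$) the exponent tends to $-\tfrac{s^2}{2}\langle\mu^u,(\Sigma^u)^{-1}\mu^u\rangle$ while the prefactor is $\sim\frac{\kappa}{2\pi t\sqrt{\det\Sigma^u}}$. Converting the sum over $n$ into $\sqrt{t}\int_{\mathbb{R}}ds$ and using $\int_{\mathbb{R}}e^{-cs^2/2}\,ds=\sqrt{2\pi/c}$, one gets
\begin{equation*}
\sqrt{t}\,\widetilde{G}_u\bigl(x,\lfloor t\mu^u\rfloor\bigr)\longrightarrow\frac{\kappa}{2\pi\sqrt{\det\Sigma^u}}\sqrt{\frac{2\pi}{\langle\mu^u,(\Sigma^u)^{-1}\mu^u\rangle}}=:A(u),
\end{equation*}
which is continuous and strictly positive on $\Sigma$ since $\Sigma^u$ is non-degenerate and depends continuously on $u$, and $\mu^u\neq 0$.

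The main obstacle is to run all these estimates \emph{simultaneously uniformly} in $u\in\Sigma$ and in $x$ with $\vert x\vert=o(\sqrt{t})$: this demands an LCLT with a remainder that is uniform over the one-parameter family of tilted walks --- a Berry--Esseen/Edgeworth-type argument based on uniform bounds for $\mathbb{E}_u(e^{i\langle\xi,S(1)\rangle})$ both near and away from $\xi=0$ --- together with uniform large-deviation tail bounds; moreover the periodicity of the lattice (the support of $S(1)$ need not generate $\mathbb{Z}^2$, e.g.\ for the model $p_{-1,1}=p_{1,1}=p_{1,-1}=\tfrac{1}{3}$ of Figure~\ref{fig:step_sets}) forces one to work on the appropriate sublattice and to keep track of the coset of $\lfloor t\mu^u\rfloor-x$ as $n$ varies. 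Since all of this is precisely the content of \cite[Thm~2]{NeSpi-66}, whose hypotheses are ensured here by Assumptions~\ref{it:norm}--\ref{it:Laplace}, in the write-up I would simply invoke that reference, after checking that the uniformity in $u$ asserted above follows from the continuity and non-degeneracy properties recorded around \eqref{eq:new_drift}.
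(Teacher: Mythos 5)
Your proposal is correct and follows essentially the same route as the paper, which likewise does not reprove the statement but invokes \cite[Thm~2]{NeSpi-66}, adding only a remark that irreducibility can be replaced by the local limit theorem of \cite[Ch.~7, P10]{Spi-64} and that a fixed $x$ can be relaxed to $\vert x\vert=o(t^{1/2})$. Your sketch of the internal mechanism (localization of the sum near $n\approx t$, uniform tilted LCLT, Gaussian summation yielding $A(u)=\kappa\bigl(2\pi\det(\Sigma^u)\,\langle\mu^u,(\Sigma^u)^{-1}\mu^u\rangle\bigr)^{-1/2}$, which matches the constant $\bigl(\sqrt{2\pi\det(\Sigma^u)}\Vert C^u\mu^u\Vert\bigr)^{-1}$ used later in the paper) and your attention to the sublattice/coset issue are consistent with, and if anything slightly more careful than, the paper's treatment.
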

We should emphasize that the initial proof of Ney and Spitzer requires the random walk $\{S(n)\}_{n\geq 1}$ to be irreducible, and is valid only for fixed $x\in \mathbb{Z}^2$. However, the only reason for the first requirement is the use of the local large deviation limit theorem  \cite[Thm~2.1]{NeSpi-66}, which has since been proven for any random walk with finite generating function and whose support generates $\mathbb{Z}^2$, see \cite[Ch.~7, P10]{Spi-64}. Likewise, the hypothesis of a fixed $x$ can be relaxed to the condition $\vert x\vert=o(t^{1/2})$ (see the proof of Proposition \ref{prop:asymptotic_Green_micro} for a similar computation).

We will apply several times the following lemma, which gives a large deviation bound which is uniform on $\mathbb{P}_u$, $u\in \Sigma$.
\begin{lem}
\label{lem:large_deviation_uniform}
There exist constants $\eta,\eta'>0$ such that for all $u\in \Sigma$, $n\geq 1$,
\begin{itemize}
    \item for all $t\in(0,1)$,
\begin{equation*}
    \mathbb{P}_u\bigl(\vert S(n)-n\mu^u\vert> tn\bigr)\leq 4\exp(-\eta t^2n);
\end{equation*}
\item for all $t\geq 1$,
\begin{equation*}
    \mathbb{P}_u\bigl(\vert S(n)-n\mu^u\vert> tn\bigr)\leq 4\exp(-\eta' tn).
\end{equation*}
\end{itemize}
\end{lem}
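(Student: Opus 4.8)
The plan is to establish the uniform large deviation bound by combining the exponential Chebyshev inequality with a uniform control on the logarithmic moment generating function of $S(1)$ under $\mathbb{P}_u$, as $u$ ranges over the compact set $\Sigma$. First I would introduce, for $u\in\Sigma$ and $\theta\in\mathbb{R}^2$, the function $\Lambda_u(\theta)=\log\mathbb{E}_u\bigl(\exp\langle\theta,S(1)\rangle\bigr)$. By \eqref{eq:new_drift} and the definition of $\mathbb{P}_u$, one has $\Lambda_u(\theta)=\log\mathbb{E}\bigl(\exp\langle\phi(u)+\theta,S(1)\rangle\bigr)$, which by Assumption~\ref{it:Laplace} is finite for $\phi(u)$ in a neighbourhood of $\overline{\mathcal{G}_0}$ and $\theta$ small; since $\phi(\Sigma)=\overline{\mathcal{G}_0}$ is compact, there is a fixed $\rho_0>0$ such that $\Lambda_u(\theta)<\infty$ for all $u\in\Sigma$ and all $\vert\theta\vert\leq\rho_0$, with $\sup_{u\in\Sigma,\,\vert\theta\vert\leq\rho_0}\Lambda_u(\theta)<\infty$ by continuity and compactness.

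Next I would record that $\Lambda_u(0)=0$ and $\nabla\Lambda_u(0)=\mu^u$, so that the function $\theta\mapsto\Lambda_u(\theta)-\langle\theta,\mu^u\rangle$ vanishes to second order at $\theta=0$. A second-order Taylor expansion with the Hessian bounded uniformly on $\{u\in\Sigma,\,\vert\theta\vert\leq\rho_0\}$ (again by compactness, the Hessian being $\mathbb{E}_u(S(1)S(1)^{\!\top}e^{\langle\theta,S(1)\rangle})$ minus a rank-one term, all entries bounded on the compact parameter range) gives a constant $M>0$ with
\begin{equation*}
    \Lambda_u(\theta)-\langle\theta,\mu^u\rangle\leq M\vert\theta\vert^2,\qquad u\in\Sigma,\ \vert\theta\vert\leq\rho_0.
\end{equation*}
Then, for a unit vector $v$ and $t>0$, the exponential Chebyshev bound applied coordinatewise (or to $\pm v$ directions and a union bound over a fixed finite set of directions covering $\mathbb{S}^1$) yields, for $\vert\lambda\vert\leq\rho_0$,
\begin{equation*}
    \mathbb{P}_u\bigl(\langle v,S(n)-n\mu^u\rangle>tn\bigr)\leq\exp\bigl(n(\Lambda_u(\lambda v)-\lambda\langle v,\mu^u\rangle)-\lambda tn\bigr)\leq\exp\bigl(nM\lambda^2-\lambda tn\bigr).
\end{equation*}
Optimising in $\lambda$: for $t\in(0,1)$ take $\lambda=t/(2M)$ if this is $\leq\rho_0$ (shrinking $\rho_0$ or enlarging $M$ if necessary so that $1/(2M)\leq\rho_0$), giving a bound $\exp(-t^2n/(4M))$; for $t\geq1$ take $\lambda=\rho_0$, giving $\exp(-n(\rho_0 t-M\rho_0^2))\leq\exp(-\tfrac12\rho_0 tn)$ once $t$ is large enough and, for $1\leq t\leq t^\ast$, absorbing the finitely many remaining cases into the constants. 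Summing the two-sided estimates over the finite covering set of directions (the event $\vert S(n)-n\mu^u\vert>tn$ is contained in the union of $\langle v,S(n)-n\mu^u\rangle>tn/\sqrt2$ over the covering vectors $v$) produces the factor $4$ and the claimed constants $\eta=1/(8M)$, $\eta'=\rho_0/(2\sqrt2)$ up to harmless adjustment.

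The main obstacle I anticipate is making the uniformity in $u$ genuinely rigorous: one must verify that the neighbourhood of $\overline{\mathcal{G}_0}$ on which the Laplace transform in Assumption~\ref{it:Laplace} is finite can be taken uniform, i.e. that there is a single $\rho_0>0$ working for all $u\in\Sigma$, and that the resulting bound on $\Lambda_u$ and its Hessian is uniform. This follows from compactness of $\Sigma$ together with continuity of $(u,\theta)\mapsto\mathbb{E}\bigl(\exp\langle\phi(u)+\theta,S(1)\rangle\bigr)$ (monotone/dominated convergence on the finite-moment region, using that $\phi$ is continuous and $\overline{\mathcal{G}_0}$ compact), but it is the one place where Assumption~\ref{it:Laplace} is really used and where some care with the order of quantifiers is needed. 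The remaining steps — the Taylor expansion, the Chebyshev optimisation, and the reduction from the Euclidean norm to finitely many linear functionals — are routine.
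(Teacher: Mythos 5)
Your proposal is correct and follows essentially the same route as the paper: both are the exponential Chebyshev (Chernoff) bound made uniform in $u\in\Sigma$ via compactness and the finiteness of the Laplace transform near $\overline{\mathcal{G}_0}$, with the quadratic rate for $t\leq 1$ and the linear rate for $t\geq 1$ coming from the local behaviour of the rate function near and away from the origin. The only cosmetic difference is that the paper states the bound through the Legendre transform $\Lambda^*_{u,1}$ and its properties at $0$, whereas you carry out the equivalent optimisation in $\lambda$ explicitly from a uniform quadratic upper bound on $\Lambda_u(\theta)-\langle\theta,\mu^u\rangle$.
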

\begin{proof}
Using Markov inequality, we get (denoting $S(n)=(S_1(n),S_2(n))$)
\begin{equation*}
    \mathbb{P}_u\bigl(\vert S_1(n)-n\mu_1^u\vert> tn\bigr)\leq 2\exp(-n\Lambda^*_{u,1}(t)),
\end{equation*}
where 
$\Lambda^*_{u,1}$ is the Legendre transform of the function 
\begin{equation*}
    \alpha\in\mathbb{R}\mapsto \log\mathbb{E}_u\bigl(e^{\alpha (S_1(1)-\mu_1^u)}\bigr)=\log\mathbb{E}\bigl(e^{(\alpha+\phi(u)_1) S_1(1)}\bigr)-\alpha\mu_1^u.
\end{equation*}
An easy computation yields that 
\begin{equation*}
    \Lambda^*_{u,1}(t)=\Lambda_1(t+\mu_1^u)-\phi(u)_1(t+\mu_1^u),
\end{equation*}
where $\Lambda_1$ is the Legendre transform of $\alpha\mapsto \log\mathbb{E}\bigl(e^{\alpha S_1(1)}\bigr)$. Since $u\mapsto \mu_1^u$ and $u\mapsto \phi(u)_{1}$ are $\mathcal{C}^2$, $u\mapsto\Lambda_{u,1}^*(t)$ is $\mathcal{C}^2$. Since for all $u\in \Sigma$, we have $\Lambda_{u,1}^*(0)=(\Lambda_{u,1}^*)'(0)=0$, and by convexity $(\Lambda_{u,1}^*)'(t)>0$ for all $(u,t)\in \Sigma\times \mathbb{R}$, we deduce the existence of $\eta>0$ such that for all $u\in \Sigma$ and all $t\leq 1$, 
\begin{equation*}
    \Lambda_{u,1}^*(t)>\eta t^2.
\end{equation*}
Hence, uniformly on $u\in \Sigma$, for $t\leq 1$,
\begin{equation*}
    \mathbb{P}_u\bigl(\vert S_1(n)-n\mu_1^u\vert> tn\bigr)\leq 2\exp(-n\eta t^2).
\end{equation*}
Doing the same for $S_2(n)$ yields the result.

Likewise, by strict convexity there exists $\eta'>0$ such that  
\begin{equation*}
    \Lambda_{u,1}^*(t)>\eta' t
\end{equation*}
for $t\geq 1$, independently of $u$. The second result is then deduced as the first one. 
\end{proof}

Finally, we will use a uniform bound on the Green function $\widetilde{G}$ on $\mathbb{Z}^2$, namely, there exists $C>0$ such that for all $x,y\in \mathbb{Z}^2$,
\begin{equation}\label{eq:uniform_bound_green}
\widetilde{G}(x,y)\leq C.
\end{equation}
The existence of such a bound is just a consequence of the transience of the walk $\{S(n)\}_{n\geq 1}$.

\section{Asymptotics of the Green function on the half-space}\label{Section:halfspace}

The most delicate part of the description of the Martin boundary is related to the behavior of the Green function along the half-axes $\mathbb{R}_{>0}e_1$ and $\mathbb{R}_{>0}e_2$. This situation is dealt by first giving an asymptotics of the Green function on the half-planes $\mathbb{H}_1$ and $\mathbb{H}_2$. 

For $x,y\in \mathbb{H}_i$, denote by 
\begin{equation}
\label{eq:def_exit_time_H}
    \tau^i_x=\inf\{n\geq 0: x+S(n)\not\in \mathbb{H}_i\}
\end{equation}
and introduce the Green kernels on the half-plane
\begin{equation*}
    \widehat{G}^i(x,y)=\sum_{n\geq 1}\mathbb{P}(x+S(n)=y,\tau^i_x>n) \quad\text{and}\quad \widehat{G}^i_u(x,y)=\sum_{n\geq 1}\mathbb{P}_u(x+S(n)=y,\tau^i_x>n).
\end{equation*}
The goal of this section is to prove the following result. For $z\in \mathbb{R}^2$, denote by $z_1$ (resp.\ $z_2$) its first (resp.\ second) coordinate.
\begin{prop}
\label{prop:asymptotic_Green_horizontal_final}
Let $\epsilon,\eta>0$ be small enough and $i\in\{1,2\}$. As $y$ goes to $\infty$ with $\vert y_i\vert =o(\vert y\vert^{1/2+\epsilon})$, then uniformly on $x\in\mathbb{H}_i$ with $\vert x\vert=o(\vert y\vert^{1/2-\eta})$,
\begin{equation*}
    \widehat{G}^i(x,y)\sim \frac{B(u(y)) x_{\bar{\imath}} }{\vert y\vert^{3/2}}V(y_{\bar{\imath}})e^{\langle u(y),x-y\rangle},
\end{equation*}
where $u(y)=\frac{y}{\vert y\vert}$, $u\mapsto B(u)$ is a continuous, positive function on $\Sigma$ and $V$ will be introduced in Proposition~\ref{prop:equiv_proba_horizon}.
\end{prop}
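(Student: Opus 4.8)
\textbf{Proof strategy for Proposition~\ref{prop:asymptotic_Green_horizontal_final}.}

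The plan is to reduce the Green function on the half-plane $\mathbb{H}_i$ to the unrestricted Green function on $\mathbb{Z}^2$ via a decomposition over the last visit to a fixed ``slab'' near the boundary, and then apply the Ney--Spitzer asymptotics (Theorem~\ref{thm:asymptotic_green_Ney_spiter}) together with a ratio-limit/renewal argument in the single coordinate $z_i$. Fix $i$; by symmetry it suffices to treat, say, $i=2$, so the half-plane is $\mathbb{H}_2=\mathbb{Z}_{>0}\times\mathbb{Z}$ and the constrained coordinate is the first one. After the exponential change of measure \eqref{eq:relation_Green_function_central_change}, with $u=u(y)$, it is equivalent to prove
\begin{equation*}
    \widehat{G}^2_{u(y)}(x,y)\sim \frac{B(u(y))\, x_1}{\vert y\vert^{3/2}}\,V(y_1)
\end{equation*}
as $y\to\infty$ with $\vert y_2\vert=o(\vert y\vert^{1/2+\epsilon})$, uniformly on $x$ with $\vert x\vert=o(\vert y\vert^{1/2-\eta})$. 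The point of passing to $\mathbb{P}_{u}$ is that then the drift $\mu^u$ is parallel to $u(y)$, so $y$ is essentially $\lfloor t\mu^u\rfloor$ for $t\asymp\vert y\vert$, which is exactly the regime of Theorem~\ref{thm:asymptotic_green_Ney_spiter}; the constrained direction corresponds to a drift component of order $\vert\mu^u_1\vert$ which is bounded below away from the $e_2$-direction but degenerates as $u(y)\to e_2$, and this degeneration is precisely what produces the factor $V(y_1)$ rather than a constant.

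The key steps, in order. First, establish the harmonic function $V$ for the killed walk on the half-line $\mathbb{Z}_{>0}$ in the first coordinate under $\mathbb{P}_u$: this is the renewal function associated to the descending ladder structure, characterized by $V(z)=\mathbb{E}_u(V(z+S_1(1)); z+S_1(1)>0)$ for the appropriately centered one-dimensional walk, and Proposition~\ref{prop:equiv_proba_horizon} (assumed) provides its definition and the equivalence $\mathbb{P}_u(\tau^2_x>n)\sim c\,V(x_1)/\sqrt n$. Second, write a last-exit decomposition: $\widehat G^2_u(x,y)=\sum_{z}\widehat G^2_u(x,z)\,\mathbb{P}_u\big(z+S(\cdot)=y \text{ before returning to the slab}\big)$, or more cleanly, condition on the last time the first coordinate lies in a bounded strip $[1,M]$; before that time the walk is constrained, after that time it behaves like the free walk on $\mathbb{Z}^2$ killed only when returning to the strip, whose Green function is controlled by \eqref{eq:uniform_bound_green} and Theorem~\ref{thm:asymptotic_green_Ney_spiter}. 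Third, separate scales: the ``macroscopic'' part contributes $A(u(y))/\sqrt{\vert y\vert}$ from Ney--Spitzer (this accounts for the second coordinate and the bulk of the first), while the ``boundary layer'' part contributes a factor proportional to $x_1 V(y_1)/\vert y\vert$ coming from the one-dimensional killed walk near $0$ on the $x$-side and the renewal function evaluated at $y_1$ on the $y$-side; the product gives the claimed $\vert y\vert^{-3/2} x_1 V(y_1)$, and $B(u)$ is built from $A(u)$ and the one-dimensional constants, inheriting continuity and positivity on $\Sigma$. Fourth, control the error terms uniformly: the condition $\vert x\vert=o(\vert y\vert^{1/2-\eta})$ keeps $x$ in the regime where the Ney--Spitzer limit is uniform and where $V(x_1)\asymp x_1$ up to the killed-walk harmonic function (so that the linearization $x_1$ is legitimate), and $\vert y_2\vert=o(\vert y\vert^{1/2+\epsilon})$ keeps $y$ close enough to the drift ray; the large deviation bounds of Lemma~\ref{lem:large_deviation_uniform} are used to discard trajectories that wander far from the drift line, uniformly in $u\in\Sigma$.

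The main obstacle I expect is making the separation of scales rigorous \emph{uniformly} as $u(y)\to e_2$, i.e.\ handling the simultaneous limit in which both the asymptotic direction approaches the boundary and $\vert y_1\vert$ may grow. In that corner the ``one-dimensional killed walk'' in the first coordinate is itself barely transient-looking on the relevant time scale, so the factorization of $\widehat G^2_u$ into a bulk piece and a boundary piece must be done with quantitative renewal estimates whose constants are uniform in $u$ near $e_2$; concretely one needs a version of the ratio-limit theorem for the half-line walk that is uniform in the drift parameter, plus the fact (from Proposition~\ref{prop:equiv_proba_horizon}) that $V(y_1)$ has the right growth to absorb the loss of one power of $\sqrt{\vert y\vert}$. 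A secondary technical point is that the increments have size up to $\vert i\vert,\vert j\vert$ possibly larger than $1$ in the big-jump models of Figure~\ref{fig:step_sets}, so overshoot over the boundary of $\mathbb{H}_2$ is genuine and the renewal function $V$ must be the one adapted to overshoots, not simply $z\mapsto z$; this is already built into Proposition~\ref{prop:equiv_proba_horizon}, so it is a matter of bookkeeping rather than a new idea. Once these uniform renewal estimates are in place, combining them with Theorem~\ref{thm:asymptotic_green_Ney_spiter} and Lemma~\ref{lem:large_deviation_uniform} yields the stated equivalence and the continuity/positivity of $B$.
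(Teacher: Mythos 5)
Your proposal assembles the right ingredients (exponential change of measure, Ney--Spitzer for the bulk, a boundary local limit theorem producing $V$, uniformity in $u$ as the direction degenerates), but the mechanism you propose for combining them is not carried out and leaves a genuine gap. The paper does \emph{not} use a spatial last-exit decomposition over a slab near the boundary. It splits into three regimes according to the size of the constrained endpoint coordinate $y_{\bar{\imath}}$ relative to $\sqrt{\vert y\vert}$: (a) $y_{\bar{\imath}}\gtrsim \vert y\vert^{1/2+\epsilon}$, treated in Lemma~\ref{lem::half_green_function_distant} by a renewal decomposition \emph{in time} at a deterministic horizon $n_t$, after which the free Green function and Theorem~\ref{thm:asymptotic_green_Ney_spiter} take over and the boundary contributes the explicit factor $\mathbb P_u(\tau^i_x=\infty)=1-c_u^{x_{\bar{\imath}}}\sim c\,x_{\bar{\imath}}\mu_{\bar{\imath}}^u$ available in closed form by skip-freeness; (b) an intermediate window $t^{-1/2}/\theta_t\leq u_{\bar{\imath}}\leq t^{-1/2+\eta}$ (Proposition~\ref{prop:asymptotic_Green_meso}); (c) $y_{\bar{\imath}}=O(\sqrt{\vert y\vert})$ (Proposition~\ref{prop:asymptotic_Green_micro}). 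In regimes (b) and (c) the Green function is extracted by summing the conditioned local limit theorems (Propositions~\ref{prop:equiv_proba_micro} and~\ref{prop:equiv_proba_horizon}) over the time index $n$ in a window of width $A\sqrt{t}$ around $t$, exactly as in Ney and Spitzer's original computation, with the tails killed by Lemma~\ref{lem:large_deviation_uniform}. Your slab decomposition degenerates precisely in regime (c), where $y$ itself lies in the boundary layer: after the ``last visit'' to the slab the walk is not free (it is conditioned to avoid the slab and must still terminate in or near it), so no factorization into a Ney--Spitzer bulk piece times a one-dimensional renewal piece is available there. That is the case which actually produces the renewal function $V$, and your sketch does not reach it.

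Two further corrections. First, you misstate Proposition~\ref{prop:equiv_proba_horizon}: it is a local limit theorem in which $V$ is a harmonic function for the \emph{time-reversed} walk $\{-S_2(n)\}$ evaluated at the \emph{endpoint} coordinate, while the starting point contributes the exactly linear factor $x_{\bar{\imath}}$; it does not assert $\mathbb P_u(\tau^i_x>n)\sim c\,V(x_{\bar{\imath}})/\sqrt n$, and such a $n^{-1/2}$ survival asymptotic only holds at $u=e_i$, where the constrained coordinate has zero drift, not uniformly in $u$. Second, your remark that ``overshoot over the boundary is genuine'' is wrong: by Assumption~\ref{it:small_neg} the negative jumps are at most $-1$ in each coordinate, so the walk is skip-free downward and exits the half-plane exactly at level $0$. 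This is not bookkeeping but the structural fact the proof exploits throughout (the geometric survival probability \eqref{eq:survival_probability_small_mu_1}--\eqref{eq:survival_probability_small_mu_2}, the exact harmonicity of $x_{\bar{\imath}}$, and $x_{\bar{\imath}}+S_{\bar{\imath}}(\tau^i_x)=0$). Without the regime splitting and the time-summation of the local limit theorems, the proposal is a plan rather than a proof.
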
 
We only prove it for $i=1$, the other case being similar. 
The proof of Proposition~\ref{prop:asymptotic_Green_horizontal_final} is decomposed in three cases, depending on the distance of the endpoint to the horizontal axis. To prove this result, we need two local limit theorems for the walk $x+S(n)$ conditioned to stay in $\mathbb{H}_1$. 

The first local limit theorem concerns the case where the distance of the endpoint $x+S(n)$ to the horizontal axis is similar to the fluctuation scale $\sqrt{n}$. The equivalence part of this result is given by \cite[Lem.~3.2]{DuRaTaWa-22} when $x_2=o(\sqrt{n})$ and by \cite[Lem.~3.1 (c)]{DuRaTaWa-22} when $x_2\geq c\sqrt{n}$ for some $c>0$, whereas the uniform bound is given by \cite[Lem.~2.3]{DuRaTaWa-22}.

\begin{prop}
\label{prop:equiv_proba_horizon}
There exists $\kappa>0$ such that for all $A>0$, as $n$ goes to infinity, uniformly on $x,y\in \mathbb{H}_1$ with $\vert x\vert=o(\sqrt{n})$ and $\vert x-n\mu^{e_1}\vert\leq A\sqrt{n}$,
\begin{equation*}
    \mathbb{P}_{e_1}(x+S(n)=y,\tau_x>n)\sim\frac{\kappa x_2V(y_2)}{2\pi \sqrt{\det(\Sigma^{e_1})}n^{2}}\exp\left(-\frac{\Vert C^{e_1} (y-n\mu^{e_1})\Vert^2}{2n}\right),
\end{equation*}
where $V$ is a positive harmonic function with respect to $\{-S_2(n)\}_{n\geq 1}$ killed at the boundary.
Moreover, there exists $C>0$ such that for all $x,y\in \mathbb{H}_1$ with $\vert x\vert=o(\sqrt{n})$,
\begin{equation*}
    \mathbb{P}_{e_1}(x+S_n=y,\tau_x>n)\leq C\frac{x_2V(y_2)}{n^2}\left(\exp\left(-\frac{\Vert C^u (y-n\mu^{e_1})\Vert^2}{2n}\right)+o\left(n^{-2}\right)\right).
\end{equation*}
\end{prop}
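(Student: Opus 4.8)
## Proof proposal for Proposition~\ref{prop:equiv_proba_horizon}

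The plan is to derive the statement from the corresponding results in \cite{DuRaTaWa-22} after reducing to the one-dimensional problem for the vertical coordinate. The key observation is that under $\mathbb{P}_{e_1}$, the increments $S(n)=(S_1(n),S_2(n))$ have drift $\mu^{e_1}$ pointing (roughly) along the horizontal axis $e_1$, and the killing only involves the second coordinate: the event $\{\tau_x>n\}$ is precisely $\{x_2+S_2(k)>0 \text{ for all }k\leq n\}$ since $\mathbb{H}_1 = \mathbb{Z}\times\mathbb{Z}_{>0}$. So the walk conditioned to stay in $\mathbb{H}_1$ is, in the first coordinate, an unconstrained $\mathbb{Z}$-valued random walk, and in the second coordinate a random walk killed at the origin. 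First I would recall the decomposition: by the local limit theorem for the half-line walk $\{-S_2(n)\}$ killed at $0$ — which is exactly \cite[Lem.~3.2]{DuRaTaWa-22} in the regime $x_2=o(\sqrt{n})$ and \cite[Lem.~3.1(c)]{DuRaTaWa-22} in the regime $x_2\geq c\sqrt{n}$ — one gets the factor $\kappa x_2 V(y_2)/(\cdots)$, with $V$ the (unique up to scaling) positive harmonic function for $\{-S_2(n)\}$ killed at the boundary, and then one multiplies by the Gaussian local limit theorem contribution $\frac{1}{2\pi\sqrt{\det(\Sigma^{e_1})}\,n}\exp(-\Vert C^{e_1}(y-n\mu^{e_1})\Vert^2/(2n))$ for the (nearly) free joint fluctuation around the drift, with $C^{e_1}$ the inverse square root of $\Sigma^{e_1}$. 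Combining these two and tracking the powers of $n$ gives the $n^{-2}$ prefactor in the claimed equivalent.

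Concretely, the steps would be: (i) state precisely that $\tau_x>n$ under $\mathbb{P}_{e_1}$ only constrains $S_2$, so the problem factors as ``constrained second coordinate $\times$ conditional first coordinate''; (ii) invoke the uniform local limit theorem for the second coordinate killed at $0$ from \cite[Lem.~2.3, Lem.~3.1(c), Lem.~3.2]{DuRaTaWa-22}, which gives both the asymptotic equivalence $\mathbb{P}_{e_1}(S_2\text{-part}) \sim \kappa x_2 V(y_2)/n^{3/2}\cdot(\text{Gaussian in }y_2)$ and the one-sided bound; (iii) combine with the standard (Stone/Spitzer) local CLT for $S_1(n)$ jointly with $S_2(n)$ — using that the Gaussian densities multiply — to assemble the two-dimensional density $\frac{1}{2\pi\sqrt{\det\Sigma^{e_1}}\,n}\exp(-\Vert C^{e_1}(y-n\mu^{e_1})\Vert^2/(2n))$; (iv) verify uniformity in $x$ with $\vert x\vert=o(\sqrt{n})$ and in $y\in\mathbb{H}_1$ — here the constraint $\vert x-n\mu^{e_1}\vert\leq A\sqrt{n}$ enters only for the equivalence part, ensuring the endpoint of the drift-shifted walk is in the central (diffusive) window; (v) for the global upper bound, replace the equivalence by the uniform bound of \cite[Lem.~2.3]{DuRaTaWa-22}, which absorbs the moderate-deviation corrections into the $o(n^{-2})$ error and holds for all $y\in\mathbb{H}_1$ without the window restriction on $x$. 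The harmonicity of $V$ for $\{-S_2(n)\}$ killed at the boundary is part of the construction in \cite{DuRaTaWa-22} and I would simply cite it.

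The main subtlety — and the step I expect to be the real work — is reconciling the two regimes $x_2=o(\sqrt n)$ versus $x_2\geq c\sqrt n$ into a single uniform statement, since the relevant lemmas of \cite{DuRaTaWa-22} are stated separately. The linear-in-$x_2$ factor $x_2 V(y_2)$ is the correct interpolation because $V(t)\sim \text{const}\cdot t$ as $t\to\infty$ (the harmonic function of the killed one-dimensional walk grows linearly), so when $x_2\asymp\sqrt{n}$ one can equally write the prefactor as $\propto V(x_2)V(y_2)/n$ up to the Gaussian, and checking that the constants match across the two regimes — i.e.\ that the $\kappa$ coming from \cite[Lem.~3.2]{DuRaTaWa-22} is compatible with the constant in \cite[Lem.~3.1(c)]{DuRaTaWa-22} after using $V(x_2)\sim \text{const}\cdot x_2$ — is the delicate bookkeeping. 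A secondary point is to make sure the ``$o(\sqrt n)$'' hypothesis on $\vert x\vert$ is genuinely enough for the first-coordinate local CLT to be uniform (this is where the cross-reference to the proof of Proposition~\ref{prop:asymptotic_Green_micro} is used: the same Fourier/characteristic-function argument shows the shift by $x$ of size $o(\sqrt n)$ does not affect the leading Gaussian term). Once these are in place, the proposition follows by multiplying the two local limit contributions and collecting error terms.
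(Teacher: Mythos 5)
Your proposal matches the paper's treatment: the paper gives no proof of Proposition~\ref{prop:equiv_proba_horizon} at all, but simply attributes the equivalence to \cite[Lem.~3.2]{DuRaTaWa-22} (for $x_2=o(\sqrt{n})$) and \cite[Lem.~3.1(c)]{DuRaTaWa-22} (for $x_2\geq c\sqrt{n}$), and the uniform bound to \cite[Lem.~2.3]{DuRaTaWa-22} --- exactly the three lemmas you identify, with the same regime-splitting and the same use of $V(t)\sim t$ to reconcile the two regimes.

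One caveat on your step (iii): the phrase ``the Gaussian densities multiply'' is not literally valid, because under $\mathbb{P}_{e_1}$ the coordinates $S_1(n)$ and $S_2(n)$ are correlated in general, and the exponent in the statement is the full quadratic form $\Vert C^{e_1}(y-n\mu^{e_1})\Vert^2$ with $C^{e_1}=(\Sigma^{e_1})^{-1/2}$, which does not factor into a product of one-dimensional Gaussians unless $\Sigma^{e_1}$ is diagonal. The cited lemmas of \cite{DuRaTaWa-22} are genuinely two-dimensional statements for the walk killed outside a half-space (proved via a strong coupling with a drifted Brownian motion and the explicit half-plane heat kernel, in the spirit of Lemma~\ref{lem:asymptotic_brownian_drift} and the appendix proof of Proposition~\ref{prop:equiv_proba_micro}), so they should be invoked directly rather than reassembled from a killed one-dimensional LLT for $S_2$ tensored with a free LLT for $S_1$. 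With that correction, the rest of your outline is consistent with what the paper does.
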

Remark that in the latter proposition, $C$ only depends on the behavior of $\frac{\vert x\vert}{\sqrt{n}}$. By \cite[Lem.~13 (a)]{DeWa-15}, we also have as $z$ goes to $\infty$
\begin{equation}
\label{eq:asymptotic_V_reverse}
   V(z)\sim z.
\end{equation}

The second local limit theorem deals with the case where the distance of the endpoint $x+S(n)$ to the horizontal axis is large compared to the fluctuation scale $\sqrt{n}$.
\begin{prop}
\label{prop:equiv_proba_micro}
For $\eta$ small enough, as $n$ goes to infinity, uniformly on $x,y\in \mathbb{H}_1$ with $\vert x\vert=o(\sqrt{n})$ and $u\in \Sigma$ with $n^{-1/2}=o(u_2)$ and $u_2\leq n^{-1/2+\eta}$,
\begin{equation*}
    \mathbb{P}_u(x+S(n)=y,\tau_x>n)=\frac{c(u)x_2\mu_2^u}{\Sigma^u_{11}2\pi n\sqrt{\det(\Sigma^u)}}\exp\left(-\frac{\Vert C^u (y-n\mu^u)\Vert^2}{2n}\right)+o\left(\frac{x_2\mu^u_2}{n}\right),
\end{equation*}
with $o(\cdot)$ uniform on $u$ and $x,y\in \mathbb{H}_1$. Moreover, there exists $C>0$ such that for all $x,y\in \mathbb{H}_1$ with $\vert x\vert=o(\sqrt{n})$,
\begin{equation*}
    \mathbb{P}_u(x+S_n=y,\tau_x>n)\leq Cx_2\frac{\mu_2^u}{t}\left(\exp\left(-\frac{\Vert C^u (y-n\mu^u)\Vert^2}{2n}\right)+o\left(n^{-2}\right)\right).
\end{equation*}
\end{prop}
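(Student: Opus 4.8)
The plan is to cut a length-$n$ trajectory from $x$ to $y$ at an intermediate time $m=m(n,u)$ chosen so that the walk has already escaped the neighbourhood of the horizontal axis by time $m$, and then to analyse the two pieces separately: the early piece will produce an escape probability, the late piece an (essentially unconstrained) local limit factor. Since $\{\tau_x>n\}=\{x_2+S_2(k)>0,\ 1\leq k\leq n\}$ involves only the second coordinate, the Markov property at time $m$ gives
\begin{equation*}
  \mathbb{P}_u(x+S(n)=y,\tau_x>n)=\sum_{z\in\mathbb{H}_1}\mathbb{P}_u(x+S(m)=z,\tau_x>m)\,\mathbb{P}_u(z+S(n-m)=y,\tau_z>n-m).
\end{equation*}
I would take $m$ of order $\sqrt{n}/\mu_2^u$, up to a slowly diverging factor, so that $(\mu_2^u)^{-2}=o(m)$ and $m=o(n)$; the hypothesis $n^{-1/2}=o(u_2)$ is exactly what makes both possible, and then at time $m$ the walk is typically at height of order $m\mu_2^u\gg\sqrt{n}$.

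For the late piece, a walk with positive drift $\mu_2^u$ started at height $\gg\sqrt{n}$ fails to remain in $\mathbb{H}_1$ over the next $n-m\leq n$ steps only with probability $\leq e^{-c\,m(\mu_2^u)^2}=o(1)$, so $\mathbb{P}_u(z+S(n-m)=y,\tau_z>n-m)=\mathbb{P}_u(z+S(n-m)=y)\,(1+o(1))$ uniformly over the $z$ that matter. The unconstrained probability is furnished by the uniform (on $u\in\Sigma$) local central limit theorem on $\mathbb{Z}^2$ for the exponentially tilted walks---the ingredient already behind Theorem~\ref{thm:asymptotic_green_Ney_spiter}---and equals $\frac{1}{2\pi(n-m)\sqrt{\det\Sigma^u}}\exp\!\bigl(-\tfrac{\Vert C^u(y-z-(n-m)\mu^u)\Vert^2}{2(n-m)}\bigr)+o(1/n)$. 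Since $\vert x\vert=o(\sqrt n)$, $z$ ranges in a window of size $O(\sqrt m)=o(\sqrt n)$ about $m\mu^u$, and $m=o(n)$, the exponent varies slowly in $z$, so this factor may be replaced by $\frac{1}{2\pi n\sqrt{\det\Sigma^u}}\exp\!\bigl(-\tfrac{\Vert C^u(y-n\mu^u)\Vert^2}{2n}\bigr)$, \emph{independently of} $z$, up to $(1+o(1))$---the atypical $z$ being discarded through the uniform large deviation bound of Lemma~\ref{lem:large_deviation_uniform}.

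Pulling this $z$-independent factor out of the sum leaves $\sum_{z}\mathbb{P}_u(x+S(m)=z,\tau_x>m)=\mathbb{P}_u(\tau_x>m)$, which---because $m\gg(\mu_2^u)^{-2}$---converges uniformly to the escape probability $\mathbb{P}_u(\tau_x=\infty)$. The heart of the matter is the behaviour of this escape probability as the drift degenerates: since it depends only on the one-dimensional walk $\{x_2+S_2(k)\}_k$, which has small positive drift $\mu_2^u$ and variance $\Sigma_{22}^u$, a standard renewal/overshoot computation gives $\mathbb{P}_u(\tau_x=\infty)=(c'(u)+o(1))\,x_2\mu_2^u$ for a positive continuous $c'$ on $\Sigma$, in the regime where $x_2\mu_2^u$ is small; comparing with the stated formula then fixes $c(u)=\Sigma_{11}^u\,c'(u)$ (the normalisation by $\Sigma_{11}^u$ being merely absorbed into the continuous factor $c$). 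Multiplying the three factors gives the announced equivalent, with $o(x_2\mu_2^u/n)$ absorbing all the $(1+o(1))$'s, the discarded atypical-$z$ and boundary-return contributions, and the difference $\mathbb{P}_u(\tau_x>m)-\mathbb{P}_u(\tau_x=\infty)$.

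The uniform upper bound in the second display is obtained by rerunning the same decomposition with inequalities: the uniform-bound part of Proposition~\ref{prop:equiv_proba_horizon} (or directly the uniform local limit bound and the transience bound \eqref{eq:uniform_bound_green}) for the two pieces, and Lemma~\ref{lem:large_deviation_uniform} to dominate the sum over $z$. I expect the main obstacle to be uniformity in $u$ across the whole mesoscopic window $n^{-1/2}\ll u_2\leq n^{-1/2+\eta}$: the cutoff $m$ genuinely depends on $u$, and the escape-probability equivalent must be uniform as $\mu_2^u\to0$ at a rate that may be as slow as $n^{-1/2}\omega(n)$; for the very slowest such $u$---where $y$ lies barely above the diffusive scale $\sqrt n$ and there is no room to fit $m$ between $(\mu_2^u)^{-2}$ and $n$---one instead compares directly with Proposition~\ref{prop:equiv_proba_horizon}, using $V(z)\sim z$ from \eqref{eq:asymptotic_V_reverse} to verify that the two formulas match on the overlap.
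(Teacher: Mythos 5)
Your plan is the natural single-cutoff renewal decomposition, and it is essentially the argument the paper uses for Lemma~\ref{lem::half_green_function_distant}, i.e.\ for the \emph{distant} regime $\mu^u_2\geq t^{-1/2+\epsilon}$. The difficulty is that Proposition~\ref{prop:equiv_proba_micro} must hold uniformly down to $u_2=n^{-1/2}\omega_0(n)$ with $\omega_0\to\infty$ \emph{arbitrarily slowly} (this is exactly how it is invoked in Proposition~\ref{prop:asymptotic_Green_meso}, where the hypothesis is $u_2\geq t^{-1/2}/\theta_t$ with $\theta_t\to 0$ at an unspecified rate), and there your error terms do not close. Every decoupling error in your scheme --- the probability that the late piece returns to the axis, the atypical-$z$ contribution, the replacement of the Gaussian factor at time $n-m$ by the one at time $n$ --- is of the form $\exp(-c\,m(\mu_2^u)^2)$ or forces $m\log n=o(n)$, while the quantity it must be compared to is the main term $x_2\mu_2^u/n\geq n^{-3/2}$. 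Since $m\leq n$ one has $m(\mu_2^u)^2\leq n(\mu_2^u)^2=\omega_0(n)^2$, so when $\omega_0$ grows slower than $\sqrt{\log n}$ (say $\omega_0=\log\log n$) the bound $\exp(-c\,m(\mu_2^u)^2)$ cannot be made $o(n^{-3/2})$, and there is no admissible choice of $m$: the window between $(\mu_2^u)^{-2}$ and $n$ is nonempty but has no room for the errors to become negligible relative to a power of $n$. Your proposed fallback for these slow $u$ --- comparing with Proposition~\ref{prop:equiv_proba_horizon} --- does not cover the gap either: that proposition's equivalence is confined to the diffusive window $\vert y-n\mu^{e_1}\vert\leq A\sqrt{n}$ with $A$ fixed (and to the measure $\mathbb{P}_{e_1}$), whereas the problematic endpoints sit at height $y_2\asymp n\mu_2^u=\sqrt{n}\,\omega_0(n)$, which leaves every fixed window. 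The intermediate moderate-deviation band $\sqrt{n}\ll y_2\ll n^{1/2+\eta}$ is precisely what Proposition~\ref{prop:equiv_proba_micro} exists to treat, and neither endpoint result reaches it.

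The paper closes this regime with different tools: a strong (Koml\'os--Major--Tusn\'ady-type) coupling of $S(n)-n\mu^u$ with a Brownian motion, uniform in $u$, with failure probability $O(n^{-r})$ for arbitrary $r$; the exact reflection--Girsanov formula for the heat kernel of the drifted Brownian motion in the half-plane (Lemma~\ref{lem:asymptotic_brownian_drift}), which produces the factor $1-e^{-2x_2y_2/(t\Sigma_{11})}\sim 2x_2y_2/(t\Sigma_{11})$ without ever introducing a cutoff time; a stopping time $\nu_n^x$ and an optional-stopping computation (using crucially that $S_2$ is skip-free, so $x_2+S_2(\tau^1_x)=0$) to reduce arbitrary starting points to height $n^{1/4-\epsilon}$ while showing the prefactor is $x_2+o(1)$; and Spitzer's local limit theorem to upgrade the integrated statement to a local one. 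Because the coupling error is polynomially small of arbitrarily high order rather than $\exp(-c\,n(\mu_2^u)^2\cdot o(1))$, uniformity over the whole band $n^{-1/2}=o(u_2)\leq n^{-1/2+\eta}$ is automatic. A secondary remark: your identification of the constant through $\mathbb{P}_u(\tau_x=\infty)\sim c'(u)x_2\mu_2^u$ requires $x_2\mu_2^u\to 0$, which under the stated hypotheses $\vert x\vert=o(\sqrt n)$, $u_2\leq n^{-1/2+\eta}$ is not automatic; in the paper this is handled because the Brownian kernel is exact and the linearisation is only invoked when $x_2y_2=o(t)$, the applications taking $\vert x\vert=o(t^{1/2-\epsilon})$ with $\epsilon>\eta$.
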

The proof of the latter proposition is very similar to the one of Proposition \ref{prop:equiv_proba_horizon} and is given in Appendix \ref{Appendix:llt}.

\subsection{Asymptotics at mesoscopic distance}
Since the support of the random walk $S$ is $\{(i,j)\in\mathbb{Z}^2: i,j\geq -1\}$ by our assumptions \ref{it:small_neg}, we have almost surely $S_2(1)\geq -1$. Such a random variable is called skip-free in the literature \cite{BPR-10} and has interesting properties allowing to explicitly compute the survival probability. 

For $z>0$, denote by $\tau'_z=\inf\{n\geq 0: z+S_2(n)=0\}$, and remark that $\tau^1_x=\tau'_{x_2}$ for $x\in \mathbb{H}_1$, see \eqref{eq:def_exit_time_H}. Then, the skip-free assumption yields that
\begin{enumerate}[label=($\textrm{SF}\arabic{*}$),ref=$\textrm{SF}\arabic{*}$]
   \item\label{eq:survival_probability_small_mu_1}if $\mu^u_2>0$, $\mathbb{P}_u(\tau_{x_2}'<\infty)=\mathbb{P}_u(\tau_{1}'<\infty)^{x_2}=c_u^{x_2}$, where $c_u\in (0,1)$ is the unique real in $(0,1)$ such that $\mathbb{E}_u(c^{S_2(1)})=1$;
   \item\label{eq:survival_probability_small_mu_2}as $\mu^u_2$ goes to $0$, one has $1-c_u\sim 2\frac{\mu^u_2}{\Sigma^u_{22}}$.
\end{enumerate}

We have the following first asymptotic result for the Green function on $\mathbb{H}_1$, when the endpoint remains far from the horizontal axis.
\begin{lem}
\label{lem::half_green_function_distant}
Let $\epsilon>0$. For any $x\in \mathbb{H}_1$, as $t\to\infty$, uniformly on $u\in \Sigma$ with $\mu^u_2\geq t^{-1/2+\epsilon}$,
\begin{equation*}
    \sqrt{t}\widehat{G}_u(x,\lfloor t\mu^u\rfloor)\sim A(u)\mathbb{P}_u(\tau^1=\infty)\sim B(u)x_2u_2.
\end{equation*}
\end{lem}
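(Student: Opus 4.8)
The plan is to write $\widehat{G}_u(x,\lfloor t\mu^u\rfloor)=\sum_{n\geq 1}\mathbb{P}_u(x+S(n)=\lfloor t\mu^u\rfloor,\tau^1_x>n)$ and compare it term-by-term with the unrestricted Green kernel $\widetilde{G}_u$ appearing in the Ney--Spitzer theorem (Theorem~\ref{thm:asymptotic_green_Ney_spiter}), exploiting the skip-free structure of $S_2$. The key identity is the last-passage (or rather first-hitting) decomposition: since under $\mathbb{P}_u$ with $\mu^u_2>0$ the walk $x_2+S_2(\cdot)$ only crosses the horizontal axis by hitting $0$, one has
\begin{equation*}
    \widetilde{G}_u(x,y)=\widehat{G}^1_u(x,y)+\sum_{z\in\partial\mathbb{H}_1}\mathbb{P}_u(x+S(\tau^1_x)=z,\tau^1_x<\infty)\,\widetilde{G}_u(z,y),
\end{equation*}
where $\partial\mathbb{H}_1=\mathbb{Z}\times\{0\}$; equivalently $\widehat{G}^1_u(x,y)=\widetilde{G}_u(x,y)-\mathbb{E}_u\bigl[\mathbf{1}_{\tau^1_x<\infty}\widetilde{G}_u(x+S(\tau^1_x),y)\bigr]$. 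First I would fix $x\in\mathbb{H}_1$ and $y=\lfloor t\mu^u\rfloor$, and use Theorem~\ref{thm:asymptotic_green_Ney_spiter} to get $\sqrt{t}\,\widetilde{G}_u(x,y)\to A(u)$ and, uniformly, $\sqrt t\,\widetilde G_u(z,y)\to A(u)$ for each boundary point $z$ reachable from $x$ (the hypothesis $|z|=o(t^{1/2})$ is satisfied since $x$ is fixed and the overshoot is bounded because jumps are skip-free, so $x+S(\tau^1_x)$ lies on the axis at a bounded horizontal displacement controlled by the at most $x_2$ excursions below). Passing to the limit through the finite/absolutely-convergent sum (justified by the uniform bound \eqref{eq:uniform_bound_green} and dominated convergence) gives
\begin{equation*}
    \sqrt t\,\widehat{G}^1_u(x,y)\longrightarrow A(u)\bigl(1-\mathbb{P}_u(\tau^1_x<\infty)\bigr)=A(u)\,\mathbb{P}_u(\tau^1_x=\infty),
\end{equation*}
which is the first claimed equivalence. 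The uniformity over $u\in\Sigma$ with $\mu^u_2\geq t^{-1/2+\epsilon}$ comes from the uniformity already built into Theorem~\ref{thm:asymptotic_green_Ney_spiter} together with uniform control of the escape-probability weights via the large-deviation bound of Lemma~\ref{lem:large_deviation_uniform}; one needs $\mu^u_2$ bounded away from $0$ on the scale $t^{-1/2+\epsilon}$ precisely so that the hitting probabilities $c_u^{x_2}$ and the convergence rate in Ney--Spitzer do not degenerate.

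For the second equivalence $\sqrt t\,\widehat G_u(x,\lfloor t\mu^u\rfloor)\sim B(u)x_2u_2$, I would identify $\mathbb{P}_u(\tau^1_x=\infty)=1-c_u^{x_2}$ by the skip-free property~\eqref{eq:survival_probability_small_mu_1}. When $\mu^u_2$ is bounded below by a positive constant this is just a positive continuous function of $u$ times, roughly, $x_2$ (for $x_2=1$; in general $1-c_u^{x_2}$), so $B(u):=A(u)(1-c_u^{x_2})/(x_2u_2)$ is continuous and positive there; when $\mu^u_2\to 0$ along the allowed range, \eqref{eq:survival_probability_small_mu_2} gives $1-c_u\sim 2\mu^u_2/\Sigma^u_{22}$, hence $1-c_u^{x_2}\sim x_2(1-c_u)\sim 2x_2\mu^u_2/\Sigma^u_{22}$, and since $\mu^u=r_uu$ with $r_u$ continuous and positive we get $\mathbb{P}_u(\tau^1_x=\infty)\sim 2x_2 r_u u_2/\Sigma^u_{22}$, so that $B(u)=2A(u)r_u/\Sigma^u_{22}$ extends continuously and positively across the boundary of $\Sigma$. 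Matching the two regimes and invoking continuity of $A$, $r_u$ and $\Sigma^u$ in $u$ yields a single continuous positive $B$ with $\sqrt t\,\widehat G_u(x,\lfloor t\mu^u\rfloor)\sim B(u)x_2u_2$ uniformly in the stated range.

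The main obstacle is the uniformity, especially near the degenerate corner where $\mu^u_2\downarrow 0$: there one must simultaneously control (i) the error term in the Ney--Spitzer asymptotics for $\widetilde G_u(z,\lfloor t\mu^u\rfloor)$ when $\mu^u$ itself is nearly parallel to the axis, and (ii) the fact that the number of boundary points $z$ effectively contributing, and the overshoot distribution of $x+S(\tau^1_x)$, depend on $u$. I expect this to be handled by splitting according to whether $\mu^u_2$ is above or below a fixed constant, using the honest $\sqrt t$-rate of Theorem~\ref{thm:asymptotic_green_Ney_spiter} in the non-degenerate regime, and in the degenerate regime bounding the difference $\widetilde G_u(x,y)-\widehat G^1_u(x,y)$ crudely via \eqref{eq:uniform_bound_green} and the large-deviation estimate of Lemma~\ref{lem:large_deviation_uniform} to absorb it into the $o$-terms; the condition $\mu^u_2\geq t^{-1/2+\epsilon}$ is exactly what makes $t^{1/2}(1-c_u^{x_2})\to\infty$ slowly enough to keep the leading term dominant. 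The honest bookkeeping of these uniform error terms, rather than any single hard idea, is where the real work lies.
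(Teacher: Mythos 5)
Your first-passage identity $\widehat{G}^1_u(x,y)=\widetilde{G}_u(x,y)-\mathbb{E}_u\bigl[\mathbf{1}_{\tau^1_x<\infty}\widetilde{G}_u(x+S(\tau^1_x),y)\bigr]$ is exact (the vertical overshoot vanishes by skip-freeness, so the exit point lies on $\mathbb{Z}\times\{0\}\neq y$), and your treatment of the second equivalence via \eqref{eq:survival_probability_small_mu_1}--\eqref{eq:survival_probability_small_mu_2} is the same as the paper's. But the route to the first equivalence has a genuine gap, precisely in the regime the lemma is designed for. First, the claim that $x+S(\tau^1_x)$ has ``bounded horizontal displacement'' is false: when $\mu^u_2\approx t^{-1/2+\epsilon}$ the vertical walk is nearly driftless on time scales up to $(\mu^u_2)^{-2}\approx t^{1-2\epsilon}$, so conditionally on $\tau^1_x<\infty$ the exit time carries substantial mass at times $\gg\sqrt{t}$ (e.g.\ $\mathbb{P}_u(\sqrt{t}<\tau^1_x<\infty)\asymp t^{-1/4}$), and $S_1(\tau^1_x)$ is then $\gg\sqrt{t}$, outside the range $\vert z\vert=o(t^{1/2})$ where Theorem~\ref{thm:asymptotic_green_Ney_spiter} applies. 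The crude bound \eqref{eq:uniform_bound_green} on that event costs $\sqrt{t}\cdot C\cdot t^{-1/4}\to\infty$, so it cannot be ``absorbed into the $o$-terms''.

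Second, and more fundamentally, your decomposition produces the answer as a difference of two quantities each converging to $A(u)/\sqrt{t}$, whose difference must be shown to be $\sim A(u)\mathbb{P}_u(\tau^1_x=\infty)/\sqrt{t}$. In the critical range $\mathbb{P}_u(\tau^1_x=\infty)\sim 2x_2\mu^u_2/\Sigma^u_{22}$ is itself of order $t^{-1/2+\epsilon}\to 0$, so you would need the Ney--Spitzer convergence with a quantitative error $o(\mu^u_2)$, which Theorem~\ref{thm:asymptotic_green_Ney_spiter} does not provide (dominated convergence only gives the limit for fixed $u$, or uniformly on compacts of $\Sigma$ away from the axes, where the lemma is indeed easy). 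The paper avoids this cancellation entirely by cutting at a \emph{deterministic} time $n_t=\lfloor t^{1-\epsilon}\rfloor$ rather than at $\tau^1_x$: the Markov property at $n_t$ makes the main term directly equal to $\frac{A(u)}{\sqrt{t}}\mathbb{P}_u(\tau^1_x>n_t)(1+o(1))$, a single small quantity, and then $\mathbb{P}_u(\tau^1_x>n_t)-\mathbb{P}_u(\tau^1_x=\infty)$ is controlled by \eqref{eq:small_exit_proba}: having survived to time $n_t$, the walk sits at height $\gtrsim n_t\mu^u_2\gtrsim t^{1/2}$, whence a return probability $\exp(-ct^{\epsilon})=o(\mu^u_2)$. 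Some device of this kind --- restarting from a position where the survival probability is already $1-o(\mu^u_2)$ --- is needed to make your argument uniform down to $\mu^u_2=t^{-1/2+\epsilon}$.
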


\begin{proof}
For $x\in \mathbb{H}_1$, we have $\tau^1_x=\inf\{n\geq 0: x_2+S_2(n)\leq 0\}$, see \eqref{eq:def_exit_time_H}. Set $t_u=\lfloor t\mu^u\rfloor$. Then, for $n_t=\lfloor t^{1-\epsilon}\rfloor$,
\begin{align*}
\widehat{G}_u(x,t_u)&=\sum_{n=1}^{n_t}\mathbb{P}_u\bigl(x+S(n)=t_u,\tau^1_x>n\bigr)+\mathbb{E}_u\bigl(G_u(x+S({n_t}),t_u),\tau_{x_2}'>n_t\bigr)\\
&=\sum_{n=0}^{n_t}\mathbb{P}_u(x+S(n)=t_u,\tau_{x_2}'>n)+\mathbb{E}_u\bigl(\widetilde{G}_u(x+S({n_t)},t_u),\tau_{x_2}'>n_t\bigr)\\
&\hspace{5.59cm}-\mathbb{E}_u\bigl(\widetilde{G}_u(x+S({\tau_x}),t_u),n_t<\tau_{x_2}'<\infty\bigr)\\
&=:S_1+S_2+S_3.
\end{align*}
By Lemma \ref{lem:large_deviation_uniform},
\begin{align*}
S_1&\leq \sum_{n=1}^{n_t}\mathbb{P}_u\left(\vert S(n) -n\mu^u\vert \geq\vert \lfloor t\mu^u\rfloor-x-n\mu^u\vert\right)\leq \sum_{n=1}^{n_t}\mathbb{P}_u\left(\vert S(n) -n\mu^u\vert >\frac{\vert t\mu^u \vert}{2n}n\right)\\
&\leq 4\sum_{n=1}^{n_t}\exp\left(-\frac{\eta \vert t\mu^u \vert^2}{(2n)^2}\right)\leq n_t\exp\left(-\frac{\eta \vert t\mu^u \vert^2}{(2n_t)^2}\right).
\end{align*}
Hence, since $n_t\leq t^{1-\epsilon}$,
\begin{equation*}
   S_1=O\bigl(\exp(-ct^{2\epsilon})\bigr)
\end{equation*}
for some $c>0$. Then, set $\alpha=t^{1/2-\epsilon/4}$ and write 
\begin{multline*}
S_2=\mathbb{E}_u\Bigl(\widetilde{G}_u(x+S(n_t),t_u),\vert S(n_t)-n_t \mu^u\vert \leq \alpha, \tau^1_x>n_t\Bigr)\\
+\mathbb{E}_u\Bigl(\widetilde{G}_u(x+S({n_t}),t_u),\vert S({n_t})-n_t \mu^u\vert > \alpha, \tau^1_x>n_t\Bigr)=:S_{21}+S_{22}.
\end{multline*}
First, by \eqref{eq:uniform_bound_green} and Lemma \ref{lem:large_deviation_uniform},
\begin{equation}
\label{eq:boundproba_alpha}
S_{22}\leq C\mathbb{P}_u(\vert S(n_t)-n_t \mu^u\vert > \alpha)\leq 4C\exp(-\eta (\alpha/n_t)^2n_t)=O(\exp(-\eta t^{\epsilon/2})).
\end{equation}
Then, on the event $\{\vert S(n_t)-n_t\mu^{u}\vert\leq \alpha\}$, as $t$ goes to infinity $\frac{\vert x+S(n_t)-n_t\mu\vert}{t}=o(t^{1/2-\epsilon})$. Hence, by Theorem~\ref{thm:asymptotic_green_Ney_spiter}, uniformly on $x+S(n_t)$ on the event $\{\vert S(n_t)\vert \leq \alpha\}$,
\begin{equation*}
    \sqrt{t}\widetilde{G}_{u}(x+S(n_t),t_u)\xrightarrow[] {t\rightarrow \infty}A(u).
\end{equation*}
Hence, $S_{21}\sim_{t\rightarrow \infty}\frac{A(u)}{\sqrt{t}}\mathbb{P}_u\bigl(\vert S(n_t)-n_t \mu^u\vert \leq \alpha, \tau^1_x>n_t\bigr)$ as $t$ goes to $\infty$. Split the latter probability as
\begin{multline*}
\mathbb{P}_u\bigl(\vert S(n_t)-n_t \mu^u\vert \leq \alpha, \tau^1_x>n_t\bigr)=\mathbb{P}_u\bigl(\vert S(n_t)-n_t \mu^u\vert \leq \alpha, n_t<\tau^1_x<\infty\bigr)\\+\mathbb{P}_u\bigl(\tau^1_x=\infty\bigr)-\mathbb{P}_u\bigl(\vert S(n_t)-n_t \mu^u\vert > \alpha, \tau^1_x=\infty\bigr).
\end{multline*}
First, 
\begin{equation*}
    \mathbb{P}_u\left(\vert S(n_t)-n_t \mu^u\vert > \alpha, \tau^1_x=\infty\right)\leq \mathbb{P}_u\left(\vert S(n_t)-n_t \mu^u\vert >\alpha\right)=O(\exp(-\eta t^{\epsilon/2}))
\end{equation*}
as in \eqref{eq:boundproba_alpha}. Then, on the event $\{\vert S(n_t)-n_t \mu^u\vert \leq \alpha, \tau_x>n_t\}$, $v:=x+S(n_t)$ satisfies 
\begin{equation*}
    v_2\geq x_2+n_t\mu^u_2-\alpha\geq ct^{1-\epsilon}\mu_2^u -t^{1/2-\epsilon/4}\geq ct^{1/2}
\end{equation*}
for some constant $c>0$, thanks to the condition $\mu_2^u\geq t^{-1/2+\epsilon}$. Hence, by \eqref{eq:survival_probability_small_mu_1} and \eqref{eq:survival_probability_small_mu_2}, on the event $\{\vert S(n_t)-n_t \mu^u\vert \leq \alpha, \tau^1_x>n_t\}$, we have 
\begin{equation}
\label{eq:small_exit_proba}
\mathbb{P}\bigl(\tau^1_{x+S(n_t)}<\infty\vert \,\vert S(n_t)-n_t \mu^u\vert \leq \alpha, \tau^1_x>n_t\bigr)\leq \left(1-c_1\frac{\mu_2^u}{\Sigma^u_{22}}\right)^{c_2t^{1/2}}\leq \exp(-ct^{\epsilon})
\end{equation}
for some constant $c>0$, where we used again that $\mu_2^u\geq t^{-1/2+\epsilon}$ in the last inequality. This implies that 
$\mathbb{P}_u\bigl(\vert S(n_t)-n_t \mu^u\vert \leq \alpha, n_t<\tau^1_x<\infty\bigr)=O\bigl(\exp(-ct^{\epsilon})\bigr)$.
Finally, by \eqref{eq:survival_probability_small_mu_1} and \eqref{eq:survival_probability_small_mu_2} with $\mu_2^u=o(1)$, $\mu^u_2\geq t^{-1/2+\epsilon}$,
\begin{equation*}
\mathbb{P}_u(\tau_{x_2}'=\infty)=1-c_u^{x_2}\geq 1-\bigl(1-ct^{-1/2+\epsilon}\bigr)^{x_2}\geq c'x_2t^{-1/2+\epsilon}
\end{equation*}
for some constants $c,c'$ depending on $x$, and we finally have $S_{21}\sim_{t\rightarrow \infty} \frac{A(u)}{\sqrt{t}}\mathbb{P}_u(\tau^1_x=\infty)$ and then 
\begin{equation*}
    S_2\sim_{t\rightarrow \infty} \frac{A(u)}{\sqrt{t}}\mathbb{P}_u(\tau^1_x=\infty).
\end{equation*}
Note that the latter term will be the main contribution to the asymptotics of $\widehat{G}_u(x,t_u)$.
Finally, by \eqref{eq:uniform_bound_green},
\begin{align*}
S_{3}&=\mathbb{E}_u\bigl(\widetilde{G}_u(x+S({\tau_x}),t_u),n_t<\tau^1_x<\infty\bigr)\\
&\leq C\mathbb{P}_u\bigl(n_t<\tau^1_x<\infty\bigr)\\
&\leq C\left(\mathbb{P}_u\bigl(\vert S(n_t)-n_t\mu^u\vert\leq \alpha,n_t<\tau^1_x<\infty\bigr)+\mathbb{P}_u\bigl(\vert S(n_t)-n_t\mu\vert> \alpha\bigr)\right).
\end{align*}
By \eqref{eq:small_exit_proba}, the first term is bounded by $\exp(-ct^\epsilon)$ and by Lemma \ref{lem:large_deviation_uniform} the second term is bounded by $\exp(-\eta t^{\epsilon/2})$. Hence,
\begin{equation*}
S_3=o\bigl(\exp(-\eta t^{\epsilon/2})\bigr).
\end{equation*}
Putting together the results on $S_1$, $S_2$ and $S_3$ yields then
\begin{equation*}
    \widehat{G}_u(x,t_u)\sim_{t\rightarrow\infty}\frac{A(u)}{\sqrt{t}}\mathbb{P}_u(\tau^1_x=\infty)
\end{equation*}
uniformly on $u$ such that $u_2\geq t^{-1/2+\epsilon}$. 
The second equivalence in Lemma~\ref{lem::half_green_function_distant} is a consequence of \eqref{eq:survival_probability_small_mu_1} and \eqref{eq:survival_probability_small_mu_2}.
\end{proof}

\subsection{Asymptotics of the Green function at microscopic distance}
The proof essentially follows the original proof of Ney and Spitzer \cite{NeSpi-66}.
\begin{prop}
\label{prop:asymptotic_Green_meso}
Let $(\theta_t)_{t\geq 0}$ a positive function going to zero at $\infty$ and $\epsilon >0$.  For $\eta>0$ small enough, as $t$ goes to $\infty$, uniformly on $u\in \Sigma$ with $\frac{t^{-1/2}}{\theta_t}\leq u_2\leq t^{-1/2+\eta}$ and $x\in\mathbb{H}_1$ with $\vert x\vert=o(t^{1/2-\epsilon})$,
\begin{equation*}
    \sqrt{t}\widehat{G}_u(x,\lfloor t\mu^u\rfloor)\sim B(u)x_2u_2.
\end{equation*}
\end{prop}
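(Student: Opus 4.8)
The plan is to run the Ney--Spitzer scheme, using the local limit theorem at microscopic distance from the axis (Proposition~\ref{prop:equiv_proba_micro}) in place of the classical one, and to keep track of uniformity in $u$ over the $t$-dependent range $t^{-1/2}/\theta_t\leq u_2\leq t^{-1/2+\eta}$. Setting $t_u=\lfloor t\mu^u\rfloor$ and fixing a large constant $K>0$ (to be sent to $\infty$ at the very end), I would decompose
\[
\widehat{G}_u(x,t_u)=\sum_{n\geq 1}\mathbb{P}_u(x+S(n)=t_u,\tau^1_x>n)=\Sigma_{\mathrm{bulk}}+\Sigma_{\mathrm{mod}}+\Sigma_{\mathrm{ld}},
\]
where $\Sigma_{\mathrm{bulk}}$ gathers the indices $|n-t|\leq K\sqrt t$, $\Sigma_{\mathrm{mod}}$ the indices $t/2<n<t-K\sqrt t$ and $t+K\sqrt t<n<2t$, and $\Sigma_{\mathrm{ld}}$ the rest. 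Since $u_2$ is small, $u_1$ is bounded below and $r_u=|\mu^u|$ is pinched between two positive constants on $\Sigma$, so $|t_u-n\mu^u|\geq c_0|t-n|-1$ for all $n$, and the answer $x_2u_2t^{-1/2}$ is always $\gtrsim t^{-1}$ (the constraint set is nonempty only when $\theta_t\geq t^{-\eta}$, so $u_2\geq t^{-1/2}\theta_t^{-1}\geq t^{-1/2}$ eventually).

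For $\Sigma_{\mathrm{ld}}$ I would use $\mathbb{P}_u(x+S(n)=t_u)\leq\mathbb{P}_u(|S(n)-n\mu^u|\geq c_0|t-n|-1)$ together with the uniform large deviation estimate of Lemma~\ref{lem:large_deviation_uniform}; summing over $n\leq t/2$ and $n\geq 2t$ gives $\Sigma_{\mathrm{ld}}=O(t\,e^{-ct})$, negligible against $x_2u_2t^{-1/2}$. For $\Sigma_{\mathrm{mod}}$, where $n\asymp t$ so that $n^{-1/2}=o(u_2)$ uniformly (because $\theta_t\to 0$) and the hypotheses of Proposition~\ref{prop:equiv_proba_micro} are met, I would apply its upper bound; using $|t_u-n\mu^u|\geq \tfrac{c_0}{2}|t-n|$ for $|t-n|\geq K\sqrt t$ and the uniform positive-definiteness of $(\Sigma^u)^{-1}$, the Gaussian factor is at most $\exp(-c_1(n-t)^2/t)$, and summing yields $\Sigma_{\mathrm{mod}}\leq C_2\,x_2\mu^u_2\,t^{-1/2}e^{-c_1K^2}+o(x_2\mu^u_2t^{-2})$, hence $\Sigma_{\mathrm{mod}}\leq\delta\,x_2u_2t^{-1/2}$ uniformly in $u,x$ once $K=K(\delta)$ is large.

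For the main term I would insert the precise equivalent of Proposition~\ref{prop:equiv_proba_micro} on the window, choosing the $\eta$ of the present statement below (say half) the threshold of that proposition, so that $u_2\leq t^{-1/2+\eta}$ still implies its hypothesis $u_2\leq n^{-1/2+\eta'}$ uniformly for $n=t+O(\sqrt t)$. Writing $n=t+m$ with $|m|\leq K\sqrt t$ and using $t_u-n\mu^u=-m\mu^u+O(1)$, $n=t(1+O(t^{-1/2}))$, the exponent $\|C^u(t_u-n\mu^u)\|^2/(2n)$ equals $m^2\|C^u\mu^u\|^2/(2t)+O(t^{-1/2})$ uniformly, so each summand is
\[
\frac{c(u)x_2\mu^u_2}{2\pi t\,\Sigma^u_{11}\sqrt{\det\Sigma^u}}\exp\!\Bigl(-\tfrac{m^2\|C^u\mu^u\|^2}{2t}\Bigr)(1+o(1))+o\!\Bigl(\tfrac{x_2\mu^u_2}{t}\Bigr).
\]
Summing over $|m|\leq K\sqrt t$: the error contributes $o(x_2\mu^u_2t^{-1/2})$ for fixed $K$, uniformly in $u$, while the main part is $\sqrt t$ times a Riemann sum of mesh $t^{-1/2}$ for $\int\exp(-v^2\|C^u\mu^u\|^2/2)\,dv$, converging to $\sqrt{2\pi}/\|C^u\mu^u\|$ up to a tail $R_K(u)$ that tends to $0$ uniformly as $K\to\infty$ (here $\|C^u\mu^u\|\geq c>0$, as $|\mu^u|=r_u$ is bounded below and $C^u$ is uniformly non-degenerate). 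Collecting everything and writing $\mu^u_2=r_uu_2$, one gets, for fixed $K$,
\[
\frac{\sqrt t\,\widehat{G}_u(x,t_u)}{x_2u_2}=B(u)(1+o(1))+O\!\bigl(R_K(u)+e^{-c_1K^2}+\delta\bigr),\qquad B(u):=\frac{c(u)\,r_u}{\Sigma^u_{11}\sqrt{2\pi\det\Sigma^u}\,\|C^u\mu^u\|},
\]
uniformly in $u$ and in $x$ with $|x|=o(t^{1/2-\epsilon})$. Letting first $t\to\infty$ and then $K\to\infty$ (so $\delta\to0$) gives $\sqrt t\,\widehat{G}_u(x,t_u)\sim B(u)x_2u_2$ with $B$ continuous and positive on $\Sigma$; one checks, by matching on the overlap of the $u_2$-ranges, that this $B$ is the function already appearing in Lemma~\ref{lem::half_green_function_distant} and Proposition~\ref{prop:asymptotic_Green_horizontal_final}.

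The main obstacle is not any single estimate but the uniformity bookkeeping over the moving window $u_2\in[t^{-1/2}/\theta_t,\,t^{-1/2+\eta}]$: one must shrink $\eta$ relative to the threshold of Proposition~\ref{prop:equiv_proba_micro} so its hypothesis survives on all $n=t+O(\sqrt t)$, verify that every $o(\cdot)$/$O(\cdot)$ coming from the local limit theorem and from the large deviation bounds is uniform in $u$, and run the classical Ney--Spitzer \emph{first $K$, then $t$, then $K$} limiting argument with all constants independent of $u$.
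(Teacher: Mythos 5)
Your proposal is correct and follows essentially the same route as the paper's proof: the same three-block decomposition (bulk window of width $O(\sqrt t)$ around $n=t$ treated with the precise equivalent of Proposition~\ref{prop:equiv_proba_micro}, moderate range treated with its upper bound, far range treated with Lemma~\ref{lem:large_deviation_uniform}), the same Riemann-sum evaluation of the Gaussian sum (which the paper delegates to Ney--Spitzer's Eq.~(2.20) and (2.26)), and the same ``first $t\to\infty$, then $A\to\infty$'' sandwich. The resulting constant $B(u)$ matches the paper's.
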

\begin{proof}
Write $y=\lfloor t\mu^u\rfloor$ and split $\widehat{G}_u(x,\lfloor t\mu^u\rfloor)$ as
\begin{align*}
\widehat{G}_u(x,\lfloor t\mu^u\rfloor)&=\sum_{n=1}^{\lfloor t-A\sqrt{t}\rfloor-1}\mathbb{P}_u\bigl(x+S(n)=y,\tau^1_x>n\bigr)+\sum_{n=\lfloor t-A\sqrt{t}\rfloor}^{{\lfloor t+A\sqrt{t}\rfloor}}\mathbb{P}_u\bigl(x+S(n)=y,\tau^1_x>n\bigr)\\
&\hspace{6.04cm}+\sum_{n=\lfloor t+A\sqrt{t}\rfloor+1}^{\infty}\mathbb{P}_u\bigl(x+S(n)=y,\tau^1_x>n\bigr)\\
&:=S_1+S_2+S_3.
\end{align*}
First, by the assumption on $u_2$, $n^{-1/2}=o(u_2)$ and $u_2\leq Cn^{-1/2+\eta}$ for $\lfloor t-A\sqrt{t}\rfloor\leq n\leq \lfloor t+A\sqrt{t}\rfloor$, with $o(\cdot)$ and $C$ only depending on $A$ and $\theta_t$. Hence, by Proposition \ref{prop:equiv_proba_micro}, uniformly for $x,y\in\mathbb{H}_1$ with $\vert x\vert=o(\sqrt{n})$ and $\lfloor t-A\sqrt{t}\rfloor\leq n\leq \lfloor t+A\sqrt{t}\rfloor$ and $u\in \Sigma$ with $\frac{t^{-1/2}}{\theta_t}\leq u_2\leq t^{-1/2+\epsilon}$,
\begin{equation*}
    \mathbb{P}_u\bigl(x+S(n)=y,\tau_x>n\bigr)=\frac{c(u)x_2\mu_2^u}{\Sigma^u_{11}2\pi n\sqrt{\det(\Sigma^u)}}\exp\left(-\frac{\Vert C^u (y-n\mu^u)\Vert^2}{2n}\right)+o\left(\frac{x_2\mu^u_2}{n}\right).
\end{equation*}
Applying the latter to $S_2$ yields
\begin{align*}
\sqrt{t}S_2&=\sqrt{t}\left(\sum_{n=\lfloor t-A\sqrt{t}\rfloor}^{{\lfloor t+A\sqrt{t}\rfloor}}\frac{c(u)x_2\mu_2^u}{\Sigma^u_{11}2\pi n\sqrt{\det(\Sigma^u)}}\exp\left(-\frac{\Vert C^u (y-n\mu^u)\Vert^2}{2n}\right)+o\left(\frac{x_2\mu^u_2}{n}\right)\right)\\
&=\sqrt{t}\frac{c_ux_2\mu_2^u}{\Sigma^u_{11}}\left(\sum_{n=\lfloor t-A\sqrt{t}\rfloor}^{{\lfloor t+A\sqrt{t}\rfloor}}\frac{1}{2\pi n\sqrt{\det(\Sigma^u)}}\exp\left(-\frac{\Vert C^u (y-n\mu^u)\Vert^2}{2n}\right)\right)+x_2\mu^u_2 o\left(1\right).
\end{align*}
By \cite[Eq.~(2.20)]{NeSpi-66},
\begin{equation*}
    \sqrt{t}\sum_{n=\lfloor t-A\sqrt{t}\rfloor}^{{\lfloor t+A\sqrt{t}\rfloor}}\frac{1}{2\pi n\sqrt{\det(\Sigma^u)}}\exp\left(-\frac{\Vert C^u (y-n\mu^u)\Vert^2}{2n}\right)\xrightarrow[]{t\rightarrow\infty}\frac{1}{\sqrt{2\pi\det(\Sigma^u)}\Vert C^u\mu^u\Vert}+\epsilon(A),
\end{equation*}
uniformly in $u\in \Sigma$, with $\epsilon(A)\xrightarrow[]{A\rightarrow\infty}0$ and $\vert x\vert=t^{1/2-\epsilon}$. Hence, we also have
\begin{equation}\label{eq:term_S2_meso}
\sqrt{t}S_2\sim_{t\rightarrow\infty}\frac{c_ux_2\mu_2^u}{\Sigma^u_{11}\sqrt{2\pi\det(\Sigma^u)}\Vert C^u\mu^u\Vert}(1+\epsilon(A)).
\end{equation}
Then, for $0<\delta <1$, split $S_1$ as 
\begin{equation*}
    S_1=\sum_{n=1}^{\lfloor\delta t\rfloor}\mathbb{P}_u\bigl(x+S(n)=y,\tau^1_x>n\bigr)+\sum_{n=\lfloor\delta t\rfloor+1}^{\lfloor t-A\sqrt{t}\rfloor-1}\mathbb{P}_u\bigl(x+S(n)=y,\tau^1_x>n\bigr)=:S_{11}+S_{12}.
\end{equation*}
By Lemma \ref{lem:large_deviation_uniform}, the first term is bounded above as 
\begin{equation*}
    \frac{\sqrt{t}}{x_2\mu_2^u}S_{11}\leq \sqrt{t}\sum_{n=1}^{\lfloor\delta t\rfloor}\mathbb{P}_u\bigl(x+S(n)=y\bigr)\leq Ct^{3/2}\exp\bigl(-c(1-\delta)t\bigr)\xrightarrow{t\rightarrow\infty} 0
\end{equation*}
uniformly in $\vert x\vert<t^{1/2-\epsilon}$, $A>0$ and $u\in \Sigma$ (see \cite[Eq.~(2.22)]{NeSpi-66}). By Proposition~\ref{prop:equiv_proba_micro} with $r=3$, the second term is bounded as 
\begin{align*}
\frac{\sqrt{t}}{x_2\mu_2^u}S_{12}\leq \sqrt{t}\sum_{n=\lfloor\delta t\rfloor+1}^{\lfloor t-A\sqrt{t}\rfloor-1}\left(\frac{Cx_2\mu_2}{n}\exp\left(-c^u \Vert y-n\mu^u\Vert^2\right)+o\left(n^{-2}\right)\right)\\
\leq \sqrt{t}\sum_{n=\lfloor\delta t\rfloor+1}^{\lfloor t-A\sqrt{t}\rfloor-1}\left(\frac{Cx_2\mu_2}{n}\exp\left(-c^u \Vert y-n\mu^u\Vert^2\right)\right)+o\left(\frac{1}{t}\right).
\end{align*}
By \cite[Eq.~(2.26)]{NeSpi-66},
\begin{equation*}
    \limsup_{t\rightarrow \infty}\sqrt{t}\sum_{n=\lfloor\delta t\rfloor+1}^{\lfloor t-A\sqrt{t}\rfloor-1}\frac{1}{n}\exp\left(-c^u \Vert y-n\mu^u\Vert^2\right)= \epsilon'(A)
\end{equation*}
uniformly on $u\in\Sigma$, with $\epsilon'(A)$ going to $0$ when $A$ going to $\infty$. Hence,
\begin{equation*}
    \limsup_{t\rightarrow \infty}\frac{\sqrt{t}}{x_2\mu_2^u}\sum_{n=\lfloor\delta t\rfloor+1}^{\lfloor t-A\sqrt{t}\rfloor-1}\mathbb{P}_u(x+S(n)=y,\tau^1_x>n)= \epsilon'(A).
\end{equation*}
Finally,
\begin{equation}\label{eq:term_S1_meso}
\limsup_{t\rightarrow \infty}\frac{\sqrt{t}}{x_2\mu_2^u}S_1= \epsilon'(A),
\end{equation}
uniformly on $u\in \Sigma$ and $x\in \mathbb{H}_1$ with $\vert x\vert<t^{1/2-\epsilon}$. One proves similarly that 
\begin{equation}\label{eq:term_S3_meso}
\limsup_{t\rightarrow \infty}\frac{\sqrt{t}}{x_2\mu_2^u}S_3= \epsilon'(A),
\end{equation}
uniformly on $u\in \Sigma$ and $x\in \mathbb{H}_1$ with $\vert x\vert<t^{1/2-\epsilon}$.  Putting \eqref{eq:term_S2_meso}, \eqref{eq:term_S1_meso} and \eqref{eq:term_S3_meso} together, we get
\begin{align*}
1-\epsilon(A)-2\epsilon'(A)<&\liminf \frac{\sqrt{t}\widehat{G}_u(x,\lfloor t\mu^u\rfloor)\Sigma^u_{11}\sqrt{2\pi\det(\Sigma^u)}\Vert C^u\mu^u\Vert}{c_ux_2\mu_2^u}\\
\leq& \limsup \frac{\sqrt{t}\widehat{G}_u(x,\lfloor t\mu^u\rfloor)\Sigma^u_{11}\sqrt{2\pi\det(\Sigma^u)}\Vert C^u\mu^u\Vert}{c_ux_2\mu_2^u}<1+\epsilon(A)+2\epsilon'(A)
\end{align*}
for all $A>0$. Letting $A$ go to $\infty$, we thus get
\begin{equation*}
    \lim_{t\rightarrow \infty}\frac{\sqrt{t}\widehat{G}_u(x,\lfloor t\mu^u\rfloor)}{x_2\mu_2^u}=\frac{c_u}{\Sigma^u_{11}\sqrt{2\pi\det(\Sigma^u)}\Vert C^u\mu^u\Vert}
\end{equation*}
uniformly on $u\in \Sigma$ and $x\in \mathbb{H}_1$ with $\vert x\vert=o(t^{1/2-\epsilon})$.
\end{proof}
We give a similar asymptotic result along the horizontal axis.
\begin{prop}
\label{prop:asymptotic_Green_micro}
Let $B>0$. Uniformly on $x\in\mathbb{H}_1$ with $\vert x\vert=o(t^{1/2-\epsilon})$, $y=t\mu^{e_1}+t'e_2$ with $0\leq t'\leq B\sqrt{t}$,
\begin{equation*}
    \widehat{G}_{e_1}(x,y)\sim  \frac{1}{\vert y\vert^{3/2}}B(e_1)V(y'_2)x_2e^{-\langle\phi(u(y))-\phi(e_1), y\rangle},
\end{equation*}
with $V$ introduced in \eqref{eq:asymptotic_V_reverse} and $B(e_1)>0$.
\end{prop}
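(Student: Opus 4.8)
The plan is to run the Ney--Spitzer summation argument, exactly as in the proof of Proposition~\ref{prop:asymptotic_Green_meso}, but now feeding in Proposition~\ref{prop:equiv_proba_horizon} (the local limit theorem at the fluctuation scale, with the renewal function $V$) rather than Proposition~\ref{prop:equiv_proba_micro}. The simplifying observation is that, since $y=t\mu^{e_1}+t'e_2$ with $0\leq t'\leq B\sqrt t$ and $\mu^{e_1}=r_{e_1}e_1$ is horizontal, the endpoint $y$ stays within the fluctuation scale $O(\sqrt t)$ of the $\mathbb{P}_{e_1}$-drift point $t\mu^{e_1}$; so I would work directly under $\mathbb{P}_{e_1}$ — no preliminary exponential change of measure, which here would land in the delicate critical regime $\mu^{u(y)}_2=\Theta(n^{-1/2})$ (covered neither by Proposition~\ref{prop:equiv_proba_horizon}, where $\mu_2=0$, nor by Proposition~\ref{prop:equiv_proba_micro}, where $\mu_2\gg n^{-1/2}$) — and let the factor $e^{-\langle\phi(u(y))-\phi(e_1),y\rangle}$ emerge from the Gaussian summation as the cost of $y$ being off the drift ray. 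Concretely I would split $\widehat{G}_{e_1}(x,y)=\sum_{n\geq 1}\mathbb{P}_{e_1}(x+S(n)=y,\tau^1_x>n)=S_1+S_2+S_3$, with $S_2$ the terms $|n-t|\leq A\sqrt t$ and $S_1,S_3$ the terms below, resp.\ above that window, for a large parameter $A$.

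For the central block $S_2$, the hypotheses of Proposition~\ref{prop:equiv_proba_horizon} hold uniformly: $|x|=o(\sqrt n)$ follows from $|x|=o(t^{1/2-\epsilon})$, and $|y-n\mu^{e_1}|=|(t-n)\mu^{e_1}+t'e_2|\leq A'\sqrt n$ follows from $t'\leq B\sqrt t$, with $A'$ depending only on $A$, $B$, $\mu^{e_1}$. Applying it replaces each summand by $\kappa x_2V(y_2)\bigl(2\pi\sqrt{\det\Sigma^{e_1}}\,n^2\bigr)^{-1}\exp\bigl(-\|C^{e_1}(y-n\mu^{e_1})\|^2/2n\bigr)$, out of which $x_2$ and $V(y_2)$ pull as constants. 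Writing $n=t+v$ with $|v|\leq A\sqrt t$ and using $1/(t+v)=t^{-1}(1+O(t^{-1/2}))$, I would complete the square in $v$: the exponent becomes $\tfrac{\|C^{e_1}\mu^{e_1}\|^2}{2t}(v-\theta)^2+\tfrac{(t')^2}{2t\,\Sigma^{e_1}_{22}}+o(1)$ uniformly in $v,t'$, the $n$-independent residual reducing to $(t')^2/(2t\Sigma^{e_1}_{22})$ precisely because $\mu^{e_1}$ is horizontal. The Riemann-sum estimate of \cite[Eq.~(2.20)]{NeSpi-66} then gives $\sum_{|v|\leq A\sqrt t}\exp(-\tfrac{\|C^{e_1}\mu^{e_1}\|^2}{2t}(v-\theta)^2)\to\sqrt{2\pi t}/\|C^{e_1}\mu^{e_1}\|$ up to $\epsilon(A)\to_{A\to\infty}0$, so, with $n^{-2}\sim t^{-2}$,
\[
S_2\sim\frac{\kappa\sqrt{2\pi}}{2\pi\sqrt{\det\Sigma^{e_1}}\,\|C^{e_1}\mu^{e_1}\|}\cdot\frac{x_2V(y_2)}{t^{3/2}}\,e^{-(t')^2/(2t\Sigma^{e_1}_{22})}
\]
up to a factor $1+o(1)+\epsilon(A)$. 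For $S_1$ and $S_3$ I would use the uniform Gaussian upper bound of Proposition~\ref{prop:equiv_proba_horizon} together with Lemma~\ref{lem:large_deviation_uniform}, splitting once more at $n=\delta t$ and $n=t/\delta$ as in the proof of Proposition~\ref{prop:asymptotic_Green_meso} (and \cite[Eqs.~(2.22),(2.26)]{NeSpi-66}), so that $S_1,S_3\leq \tfrac{x_2V(y_2)}{t^{3/2}}(\epsilon(A)+o(1))$; since the residual exponent is bounded by $B^2/(2\Sigma^{e_1}_{22})$ for fixed $B$, this is negligible against $S_2$. Letting $A\to\infty$ then yields the claimed equivalence after the rewriting $t^{-3/2}=r_{e_1}^{3/2}|y|^{-3/2}(1+o(1))$ (valid since $|y|=tr_{e_1}(1+O(t^{-1}))$) and setting $B(e_1):=\kappa r_{e_1}^{3/2}\bigl(\sqrt{2\pi\det\Sigma^{e_1}}\,\|C^{e_1}\mu^{e_1}\|\bigr)^{-1}>0$, which by continuity matches the $B(e_1)$ of Proposition~\ref{prop:asymptotic_Green_horizontal_final}.

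Two points will need care, and I regard them as the crux. The first is the identity $\langle\phi(u(y))-\phi(e_1),y\rangle=(t')^2/(2t\Sigma^{e_1}_{22})+o(1)$, a statement about the parametrisation $\phi$ near the boundary point $e_1$: it can be obtained by a second-order Taylor expansion of $\phi$ at $e_1$, using $\nabla\Lambda(\phi(u))=\mu^u$ and $\mathrm{Hess}\,\Lambda(\phi(u))=\Sigma^u$ (with $\Lambda(\psi)=\log\mathbb{E}(e^{\langle\psi,S(1)\rangle})$) from \eqref{eq:new_drift}, together with the fact that the image of $D\phi(e_1)$ is the (vertical) tangent line to $\mathcal{G}$ at $\phi(e_1)$; alternatively, and more cheaply, by observing that the very same Gaussian summation performed for $\widetilde{G}_{e_1}$ in the full plane — i.e.\ with $n^{-1}$ in place of $n^{-2}$ and without the factor $x_2V(y_2)$ — must reproduce the asymptotics of Theorem~\ref{thm:asymptotic_green_Ney_spiter}, which forces the residual to coincide with the exponential tilt. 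The second, and more pervasive, obstacle is uniformity: every estimate above must hold uniformly in $x$ with $|x|=o(t^{1/2-\epsilon})$ and in $t'\in[0,B\sqrt t]$ (equivalently, in the asymptotic direction $u(y)$ as it approaches $e_1$), which is exactly where the uniform formulations of Proposition~\ref{prop:equiv_proba_horizon} and Lemma~\ref{lem:large_deviation_uniform} are indispensable and why the argument must follow the original Ney--Spitzer proof rather than a softer compactness argument.
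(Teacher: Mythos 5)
Your proposal is correct and follows essentially the same route as the paper: the same three-block decomposition over the window $\vert n-t\vert\leq A\sqrt{t}$, with Proposition~\ref{prop:equiv_proba_horizon} feeding the central block and the Ney--Spitzer Riemann-sum estimates controlling the tails, exactly as the paper does by transplanting the proof of Proposition~\ref{prop:asymptotic_Green_meso}. The only difference is that you make explicit (and correctly, e.g.\ the residual $(t')^2/(2t\Sigma^{e_1}_{22})$ and its identification with $\langle\phi(u(y))-\phi(e_1),y\rangle$ via the second-order expansion of $\phi$ at $e_1$) the Gaussian-summation step that the paper delegates to a citation of \cite{NeSpi-66}.
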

\begin{proof}
The proof follows the lines of the one of Proposition \ref{prop:asymptotic_Green_meso}, using Proposition \ref{prop:equiv_proba_horizon} instead of Proposition \ref{prop:equiv_proba_micro}. The only different step is the computation of 
\begin{equation*}
    \sqrt{t}\left(\sum_{n=\lfloor t-A\sqrt{t}\rfloor}^{{\lfloor t+A\sqrt{t}\rfloor}}\frac{1}{2\pi n\sqrt{\det(\Sigma^u)}}\exp\left(-\frac{\Vert C^u (\lfloor t\mu^u\rfloor-n\mu^u)\Vert^2}{2n}\right)\right)
\end{equation*}
which has to be replaced by 
\begin{equation*}
    \sqrt{t}\left(\sum_{n=\lfloor t-A\sqrt{t}\rfloor}^{{\lfloor t+A\sqrt{t}\rfloor}}\frac{1}{2\pi n^2\sqrt{\det(\Sigma^{e_1})}}\exp\left(-\frac{\Vert C^{e_1} (\lfloor t\mu^{e_1}+t'e_2\rfloor-n\mu^{e_1})\Vert^2}{2n}\right)\right):=S_A.
\end{equation*}
As $t$ goes to infinity and for $\lfloor t-A\sqrt{t}\rfloor\leq n\leq \lfloor t+A\sqrt{t}\rfloor$, $n\sim t$, and thus 
\begin{equation*}
    S_A\sim \frac{1}{\sqrt{t}}\left(\sum_{n=\lfloor t-A\sqrt{t}\rfloor}^{{\lfloor t+A\sqrt{t}\rfloor}}\frac{1}{2\pi n\sqrt{\det(\Sigma^{e_1})}}\exp\left(-\frac{\Vert C^{e_1} (\lfloor t\mu^{e_1}+t'e_2\rfloor-n\mu^{e_1})\Vert^2}{2n}\right)\right).
\end{equation*}
By \cite{NeSpi-66}, uniformly on $t'\leq B\sqrt{t}$,
\begin{multline*}
\lim_{t\rightarrow \infty}\sum_{n=\lfloor t-A\sqrt{t}\rfloor}^{{\lfloor t+A\sqrt{t}\rfloor}}\frac{1}{2\pi n\sqrt{\det(\Sigma^{e_1})}}\exp\left(-\frac{\Vert C^{e_1} (\lfloor t\mu^{e_1}+t'e_2\rfloor-n\mu^{e_1})\Vert^2}{2n}\right)\\
\hspace{3cm}\sim A(u(y))\exp(-\langle \phi(u(y))-\phi(e_1),y\rangle)+\epsilon(A),
\end{multline*}
with $\epsilon(A)$ going to $0$ as $A$ goes to infinity. Hence,
\begin{equation*}
    \lim_{A\rightarrow \infty} \lim_{t\rightarrow \infty}\sqrt{t}S_A=A(u(y))\exp(-\langle \phi(u(y))-\phi(e_1),y\rangle).
\end{equation*}
Concluding the proof as the one of Proposition \ref{prop:asymptotic_Green_meso}, we get
\begin{equation*}
    \widehat{G}_{e_2}(x,y)\sim_{t\rightarrow \infty} \frac{B(e_2)}{\vert y\vert^{3/2}}V(y_2)x_2e^{-\langle \phi(u(y))-\phi(e_1),y\rangle},
\end{equation*}
with $B(e_2)=\kappa \widetilde{A}(e_2)$.
\end{proof}

\begin{proof}[Proof of Proposition \ref{prop:asymptotic_Green_horizontal_final}]
When $\sqrt{\vert y\vert}=o(y_2)$, the statement of Proposition \ref{prop:asymptotic_Green_horizontal_final} is obtained by putting together Lemma \ref{lem::half_green_function_distant} and Proposition \ref{prop:asymptotic_Green_meso} with $u=u(y)$ and $t=\frac{\vert y\vert}{\vert \mu^{u(y)}\vert}$, using that $\mu_2^{u(y)}\sim \frac{y_2}{\vert y\vert}$ and $V(z)\sim z$ as $z$ goes to infinity.
When $y_2\leq B\vert y_1\vert^{1/2}$ for some $B>0$, use Proposition \ref{prop:asymptotic_Green_micro} with $t=\frac{y_1}{\vert \mu^{e_1}\vert}\sim \frac{\vert y\vert}{\vert \mu^{e_1}\vert}$ to get 
\begin{equation*}
    \widehat{G}(x,y)\sim \frac{B(e_1)x_2V(y_2)}{\vert y\vert^{3/2}}e^{\langle\phi(e_1),y\rangle-\langle\phi(u(y)), y\rangle}
\end{equation*}
with $B(e_1)>0$. The result is deduced provided we prove that 
\begin{equation*}
    \langle\phi(e_1),y\rangle=\langle \phi(u(y)), x\rangle +o(1)
\end{equation*}
uniformly on $y=o(\vert x\vert^{1/2-\epsilon})$. As $y$ goes to infinity, $u(y)=e_1+\frac{y_2}{\vert y\vert}e_2+o\bigl(\frac{y_2}{\vert y\vert}\bigr)$. Since $\phi$ is $\mathcal{C}^2$, $K(\phi(u))=1$ and $\nabla K\circ \phi(u)\in \mathbb{R}u$. For $\delta>0$ small enough with $u+\delta\in \Sigma$, we have $\phi(u+\delta)=\phi(u)+u^{\perp}h_u(\delta)+O(\vert \delta\vert^2)$ for some linear functional $h_u$. Hence,
\begin{equation*}
    \phi(u(y))=\phi(e_1)+\alpha\frac{y_2}{\vert y\vert}e_2+O\left(\frac{y_2}{\vert y\vert}\right)^2.
\end{equation*}
Therefore, for $x=o(\vert y\vert^{1/2-\epsilon})$,
\begin{equation*}
    \langle \phi(u(y)), x\rangle =\langle\phi(e_1),y\rangle+o\left(\frac{y_2}{\vert y\vert^{1/2+\epsilon}}\right)= \langle\phi(e_1),y\rangle+o(1).
\end{equation*}
The result is then deduced.
\end{proof}

\section{Identification of the Martin Boundary}\label{Section:proof_main_results}
We can now describe the Martin boundary $\partial_{\mathbb{Z}_{>0}^2}^MS$. We first prove Theorem \ref{thm:asymptotic_inside_green}, and then deduce from the latter the following description:
\begin{cor}
\label{cor:MB}
The Martin boundary $\partial_M^{\mathbb{Z}_{>0}^2}S$ of $\{S(n)\}_{n\geq 0}$ killed outside of the quarter plane $\mathbb{Z}_{>0}^2$ is homeomorphic to $\overline{\mathcal{G}_0}$ through the map
\begin{equation*}
    (a_0,b_0)\in \overline{\mathcal{G}_0}\mapsto \frac{h_{(a_0,b_0)}}{h_{(a_0,b_0)}(1,1)}\in\partial_M^{\mathbb{Z}_{>0}^2}S.
\end{equation*}
More precisely, for $(a_0,b_0)\in\mathcal{G}_0$, writing $u=\frac{\nabla K(a_0,b_0)}{\Vert \nabla K(a_0,b_0)\Vert}$,
\begin{equation*}
   h_{(a_0,b_0)}(x)=e^{\langle (a_0,b_0), x\rangle}\mathbb{P}_{u}(\tau_x=\infty)=e^{\langle (a_0,b_0), x\rangle}-\mathbb{E}\left(e^{\langle(a_0,b_0), x+S(\tau_x)\rangle},\tau_x<\infty\right)
\end{equation*}
and if $(a_0,b_0)\in\partial \mathcal{G}_0$, with $e_i=\frac{\nabla K(a_0,b_0)}{\Vert \nabla K(a_0,b_0)\Vert}$, $i\in\{1,2\}$,
\begin{equation*}
h_{(a_0,b_0)}(x)=x_{\bar{\imath}}e^{\langle (a_0,b_0),x\rangle}-\mathbb{E}\left(e^{\langle(a_0,b_0),x+S(\tau_x)\rangle}(x_{\bar{\imath}}+S_{\bar{\imath}}(\tau_x))\right).
\end{equation*}
\end{cor}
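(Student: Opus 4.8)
The plan is to feed the Green function asymptotics of Theorem~\ref{thm:asymptotic_inside_green} into Doob's description of the Martin boundary: $\partial_M^{\mathbb{Z}_{>0}^2}S$ is the set of accumulation points of the maps $K_M(\,\cdot\,,y)=G(\,\cdot\,,y)/G(x_0,y)$ as $\vert y\vert\to\infty$, so the first task is to compute all these limits. Fix a sequence $y(n)$ with $\vert y(n)\vert\to\infty$; extracting a subsequence, we may assume $u_n:=y(n)/\vert y(n)\vert\to u_0\in\Sigma$. If $u_0\in\Sigma\setminus\{e_1,e_2\}$, I would apply the first case of Theorem~\ref{thm:asymptotic_inside_green} to both $G(x,y(n))$ and $G(x_0,y(n))$: the prefactors $\sqrt{\vert y(n)\vert}$ cancel in the ratio, the exponential factors combine into $e^{\langle\phi(u_n),x-x_0\rangle}\to e^{\langle\phi(u_0),x-x_0\rangle}$ by continuity of $\phi$, and since $\mu^{u_0}$ has positive coordinates we have $\mathbb{P}_{u_0}(\tau_{x_0}=\infty)>0$, so
\begin{equation*}
K_M(x,y(n))\xrightarrow{n\to\infty}\frac{e^{\langle\phi(u_0),x-x_0\rangle}\mathbb{P}_{u_0}(\tau_x=\infty)}{\mathbb{P}_{u_0}(\tau_{x_0}=\infty)}=:\widehat{h}_{u_0}(x).
\end{equation*}
If $u_0=e_i$, apply instead the second case of Theorem~\ref{thm:asymptotic_inside_green}; now the prefactors $\vert y(n)\vert^{3/2}$ and the accompanying $V$-factor cancel in the ratio, and one obtains the analogous limit $\widehat{h}_{e_i}(x)=\frac{e^{\langle\phi(e_i),x-x_0\rangle}(x_{\bar{\imath}}-\mathbb{E}_{e_i}(x_{\bar{\imath}}+S_{\bar{\imath}}(\tau_x)))}{1-\mathbb{E}_{e_i}(1+S_{\bar{\imath}}(\tau_{x_0}))}$, the strict positivity of the denominator following from positivity of $A(e_i)$ together with $G(x_0,y(n))>0$ and the minimum principle applied to the non-trivial harmonic function $x\mapsto x_{\bar{\imath}}-\mathbb{E}_{e_i}(x_{\bar{\imath}}+S_{\bar{\imath}}(\tau_x))$. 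Continuity of $\phi$ ensures the limit is the same along any sequence with limiting direction $u_0$, so each $\widehat{h}_u$, $u\in\Sigma$, is a genuine boundary point.

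Next I would convert these expressions into the two $\mathbb{P}$-formulas of the statement via a change of measure. Write $(a_0,b_0)=\phi(u_0)\in\overline{\mathcal{G}_0}$. Since $\mathbb{E}\bigl(e^{\langle(a_0,b_0),S(1)\rangle}\bigr)=1$, the process $e^{\langle(a_0,b_0),x+S(n\wedge\tau_x)\rangle}$ is a nonnegative $\mathbb{P}$-martingale, and $\mathbb{P}_{u_0}$ restricted to $\mathcal{F}_n$ has density $e^{\langle(a_0,b_0),S(n)\rangle}$ with respect to $\mathbb{P}$; optional stopping then gives, in the interior case, $e^{\langle(a_0,b_0),x\rangle}\mathbb{P}_{u_0}(\tau_x=\infty)=e^{\langle(a_0,b_0),x\rangle}-\mathbb{E}\bigl(e^{\langle(a_0,b_0),x+S(\tau_x)\rangle},\tau_x<\infty\bigr)$, which is the second formula of the corollary and exhibits $\widehat{h}_{u_0}$ as proportional to it. In the boundary case $(a_0,b_0)=\phi(e_i)$ satisfies $\sum_s p_s\, s_{\bar{\imath}}\,e^{\langle(a_0,b_0),s\rangle}=0$ (because $\mu^{e_i}$ is parallel to $e_i$), so $x\mapsto x_{\bar{\imath}}e^{\langle(a_0,b_0),x\rangle}$ is harmonic in the interior; applying optional stopping to the martingale $\bigl(x_{\bar{\imath}}+S_{\bar{\imath}}(n\wedge\tau_x)\bigr)e^{\langle(a_0,b_0),x+S(n\wedge\tau_x)\rangle}$ — using that $\tau_x<\infty$ holds $\mathbb{P}_{e_i}$-almost surely because the $\bar{\imath}$-coordinate is a recurrent mean-zero walk, and controlling the overshoot $S_{\bar{\imath}}(\tau_x)$ through assumption~\ref{it:Laplace} and the skip-free property — yields the third formula of the corollary.

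It then remains to identify these functions with those of Part~\ref{part:construction} and to assemble the homeomorphism. For the first point, both $h_{(a_0,b_0)}$ (from Lemma~\ref{lem:harmonic_expression}, resp.\ Proposition~\ref{prop:boundary_expression_hf} when $(a_0,b_0)\in\partial\mathcal{G}_0$) and $\widehat{h}_{\phi^{-1}(a_0,b_0)}$ are positive harmonic functions; by Doob's representation $h_{(a_0,b_0)}$ decomposes over the boundary points $\widehat{h}_u$ with some representing measure $\mu$, and since the series \eqref{eq: h(i,j)} is dominated by its leading term $e^{\langle(a_0,b_0),(i,j)\rangle}$ as $(i,j)\to\infty$ while $\widehat{h}_u(i,j)$ has exponential rate $\phi(u)$, and distinct points of $\overline{\mathcal{G}_0}$ give non-comparable rates, $\mu$ must be a point mass at $\phi^{-1}(a_0,b_0)$; hence $h_{(a_0,b_0)}=c\,\widehat{h}_{\phi^{-1}(a_0,b_0)}$, and dividing by the value at $x_0=(1,1)$ produces the stated formulas. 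Finally $u\mapsto\widehat{h}_u$ is continuous and injective on the compact space $\Sigma$ (injectivity because the rates $\phi(u)$ are pairwise distinct, $\phi$ being a bijection onto $\overline{\mathcal{G}_0}$), hence a homeomorphism onto its image; that image is the entire Martin boundary since every $y(n)\to\infty$ has a subsequence with a limiting direction (first paragraph), while every $u_0\in\Sigma$ is realized along a suitable sequence in $\mathbb{Z}_{>0}^2$ converging to $u_0$. Composing with $\phi:\Sigma\to\overline{\mathcal{G}_0}$ yields $\partial_M^{\mathbb{Z}_{>0}^2}S\cong\overline{\mathcal{G}_0}$.

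The main obstacle is the boundary-direction case $u_0=e_i$: establishing strict positivity of the degenerate harmonic function $x\mapsto x_{\bar{\imath}}-\mathbb{E}_{e_i}(x_{\bar{\imath}}+S_{\bar{\imath}}(\tau_x))$ and identifying it with the renormalized compensation function of Proposition~\ref{prop:boundary_expression_hf} forces one to use the skip-free structure, the uniformity built into Theorem~\ref{thm:asymptotic_inside_green}, and the precise asymptotics \eqref{eq:asymptotic_V_reverse} of $V$ all at once; the interior case and the remaining Martin-boundary bookkeeping are comparatively routine.
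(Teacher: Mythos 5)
Your proposal is correct and follows essentially the same route as the paper: compute the Martin kernel limits from Theorem~\ref{thm:asymptotic_inside_green} in the two regimes, rewrite them probabilistically via the exponential change of measure, deduce the homeomorphism from continuity and injectivity of $u\mapsto\bar h_{\phi(u)}$ on the compact $\Sigma$, and identify the compensation-approach functions through the Doob representation together with their exponential growth rates. The only (harmless) deviation is that you run the growth-rate identification directly at the boundary points $e_i$, whereas the paper obtains that case by continuity from the interior case combined with Proposition~\ref{prop:boundary_expression_hf}, thereby avoiding having to establish the asymptotics of the renormalized function $t_{i,j}'(y_0)$.
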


\begin{proof}[Proof of Theorem \ref{thm:asymptotic_inside_green}]
We only prove the first statement and sketch the proof of the second one, which is similar to the first proof.

Let $V(u_0)$ be a neighborhood of $u_0\in\Sigma$ such that $d(V(u_0),\partial \Sigma)>0$. Set $t_n=\frac{\vert y(n)\vert}{\vert \mu^{u(y(n))}\vert}$ and $u_n=\frac{y(n)}{\vert y(n)\vert}$. Then, for $m_n\geq 1$ to adjust later,
\begin{align}
G_{u_n}(x,y(n))&=\sum_{m=1}^{m_n}\mathbb{P}_{u_n}(x+S(m)=y(n),\tau_x>m)+\mathbb{E}_{u_n}\left(G_{u_n}(x+S({m_n}),y(n)),\tau_x>m_n\right)\nonumber\\
&=\sum_{m=0}^{m_n}\mathbb{P}_{u_n}(x+S(m)=y(n),\tau_x>m)+\mathbb{E}_{u_n}\left(\widetilde{G}_u(x+S({m_n}),y(n)),\tau_x>m_n\right)\nonumber\\
&\hspace{5cm}-\mathbb{E}_{u_n}\left(\widetilde{G}_u(x+S({\tau_x}),y(n)),m_n<\tau_x<\infty\right)\label{eq:renewal_decomposition}\\
&=:S_1+S_2+S_3.\nonumber
\end{align}
By the second part of Lemma \ref{lem:large_deviation_uniform}, as long as $m_n<t_n/3$ and for $\vert y(n)\vert$ large enough
\begin{align*}
S_1&\leq \sum_{n=1}^{m_n}\mathbb{P}_{u_n}\left(\vert S(m) -m\mu^{u(n)}\vert \geq\left\vert \lfloor t_n\mu^{u(n)}\rfloor-x-m\mu^{u(n)}\right\vert\right)\\
&\leq \sum_{n=1}^{m_n}\mathbb{P}_{u_n}\left(\vert S(m) -m\mu^{u(n)}\vert >\frac{\vert t_n\mu^{u(n)} \vert}{2m}m\right)\\
&\leq \sum_{n=1}^{m_n}\exp\left(-\frac{\eta' \vert t\mu^{u(n)} \vert}{2m}\right)\leq m_n\exp\left(-\frac{\eta' \vert t_n\mu^{u(n)} \vert}{2m_n}\right).
\end{align*}
Hence, as $m_n=o(t_n^{1/4})$,
\begin{equation}\label{eq:green_S_1}
S_1=o(\exp(-\sqrt{t_n})).
\end{equation}
Then, write 
\begin{align*}
S_2=&\mathbb{E}_{u_n}\left(\widetilde{G}_{u_n}(x+S({m_n}),y(n)),\vert S({m_n})-m_n \mu^{u(n)}\vert \leq 2m_n, \tau_x>m_n\right)\\
&\hspace{3cm}+\mathbb{E}_{u_n}\left(\widetilde{G}_{u_n}(x+S({m_n}),y(n)),\vert S({m_n})-m_n \mu^{u(n)}\vert > 2m_n, \tau_x>m_n\right)\\
=&S_{21}+S_{22}.
\end{align*}
First, by \eqref{eq:uniform_bound_green} and Lemma \ref{lem:large_deviation_uniform},
\begin{equation*}
S_{22}\leq C\mathbb{P}_{u_n}(\vert S({m_n})-m_n \mu^{u(n)}\vert > 2m_n)\leq C\exp(-2\eta' m_n).
\end{equation*}
Then, set $\widetilde{u}=\frac{y(n)-(x+S({m_n}))}{\vert y(n)-(x+S({m_n}))\vert}\in \mathbb{S}^1$. There exists $t_0$ such that on the event $\{\vert S({m_n})-n\mu^{u}\vert\leq 2m_n\}$, for $t_n\geq t_0$, $\widetilde{u}\in V(u_0)$. Then, by Theorem \ref{thm:asymptotic_green_Ney_spiter}, as $t_n$ goes to infinity with $m_n=o(t_n^{1/4})$, uniformly on $x+S({m_n})$ on the event $\{\vert S({m_n})\vert \leq 2m_n\}$ and using the fact that $u_n\rightarrow u_0$,
\begin{equation*}
    \sqrt{t}\widetilde{G}_{u_n}(x+S({m_n}),y(n))\xrightarrow[] {t\rightarrow \infty}A(u_0).
\end{equation*}
Hence,
\begin{multline*}
    S_{21}\sim_{n\rightarrow \infty}\mathbb{E}_{u_n}\left(\frac{A(u_0)}{\sqrt{t_n}}\mathbf{1}_{\vert S({m_n})-m_n \mu^{u(n)}\vert \leq 2m_n, \tau_x>m_n}\right)\\=\frac{A(u_0)}{\sqrt{t_n}}\mathbb{P}_{u_n}\left(\vert S({m_n})-m_n \mu^{u(n)}\vert \leq 2m_n, \tau_x>m_n\right).
\end{multline*}
Since 
\begin{multline*}
\mathbb{P}_{u_n}\left(\vert S({m_n})-m_n \mu^{u(n)}\vert \leq 2m_n, \tau_x>m_n\right)\\=\mathbb{P}_{u_n}\left( \tau_x>m_n\right)-\mathbb{P}_{u_n}\left(\vert S({m_n})-m_n \mu^{u(n)}\vert >2m_n, \tau_x>m_n\right),
\end{multline*}
with 
\begin{equation*}
\mathbb{P}_{u_n}( \tau_x>m_n)\sim_{n\rightarrow\infty}\mathbb{P}_{u_n}(\tau_x=\infty)\sim_{n\rightarrow\infty}\mathbb{P}_{u_0}(\tau_x=\infty)
\end{equation*}
and $\mathbb{P}_{u_n}(\vert S({m_n})-m_n \mu^{u(n)}\vert >2m_n, \tau_x>m_n)\leq \exp(-2\eta' m_n)$ by Lemma \ref{lem:large_deviation_uniform},
\begin{equation*}
    S_{21}\sim_{n\rightarrow \infty}\frac{A(u)\mathbb{P}_{u_0}( \tau_x=\infty)}{\sqrt{t_n}},
\end{equation*}
and as long as $m_n=o(t_n^{1/4})$, and $\log(t_n)=o(m_n)$,
\begin{equation}
\label{eq:green_S2}
S_2\sim_{t\rightarrow \infty}\frac{A(u_0)\mathbb{P}_{u_0}( \tau_x=\infty)}{\sqrt{t}}.
\end{equation}
As in the proof of Lemma \ref{lem::half_green_function_distant}, the latter term will be the main contribution to the asymptotics of $G_{u_n}(x,y(n))$.
Finally, by \eqref{eq:uniform_bound_green},
\begin{align*}
S_{3}&=\mathbb{E}_{u_n}\left[\widetilde{G}_{u_n}(x+S_{\tau_x},y(n)),m_n<\tau_x<\infty\right]\\
&\leq C\mathbb{P}_{u_n}\left[m_n<\tau_x<\infty\right]\\
&\leq C\left(\mathbb{P}_{u_n}\left(\vert S({m_n})-m_n\mu^{u(n)}\vert\leq c m_n,m_n<\tau_x<\infty\right)+\mathbb{P}_{u_n}\left(\vert S({m_n})-m_n\mu\vert> c m_n\right)\right),
\end{align*}
for $c$ small enough so that $\inf_{u\in V(u_0)} d(\mu^{u(n)},\partial\mathbb{R}_{>0}^2):=d>c$. On the one hand, it follows from Lemma~\ref{lem:large_deviation_uniform} that there exists $\eta'$ such that 
\begin{equation*}
    \mathbb{P}_{u_n}(\vert S({m_n})-m_n\mu^{u(n)}\vert> c m_n)\leq \exp(-\eta'm_n)
\end{equation*}
for all $m_n\geq 1$ and $u\in V(u_0)$. On the other hand,
\begin{align*}
\mathbb{P}_{u_n}(\vert S({m_n})-m_n\mu^{u(n)}\vert\leq c m_n,m_n<\tau_x<\infty)\leq& \sup_{y\in B(x+S({m_n}),c_{m_n})}\mathbb{P}_{u_n}(\tau_{y}<\infty)\\
\leq&  \sup_{y\in B(x+S({m_n}),c_{m_n})}\mathbb{P}_{u_n}(\tau^i_{y}<\infty),
\end{align*}
for $i\in\{1,2\}$, where $\tau^i_{y}=\inf\{n\geq 0: x_i+S_i(n)\leq 0\}$. Since $S$ is skip-free and there exists $\mu_i>0$ such $\mu_i^u\geq \mu_i$ for all $u\in V(u_0)$, by \eqref{eq:survival_probability_small_mu_1} and \eqref{eq:survival_probability_small_mu_2} there exists $c_{u_0}>0$ independent of $u\in V(u_0)$ such that $\mathbb{P}_{u_n}(\tau^i_{y}<\infty)\leq c_{u_0}^{y}$. Hence,
by the choice of $c$ we have 
\begin{equation}
\label{eq:asymptotic_survival_away_drift}
\sup_{y\in B(x+S({m_n}),c_{m_n})}\mathbb{P}_{u_n}(\tau_{y}<\infty)\leq \exp(-c_{u_0}(d-c)m_n).
\end{equation}
Finally, there exists $c>0$ independent of $u\in V(u_0)$ such that $S_3\leq \exp(-cm_n)$, and as long as $\log(t_n)=o(m_n)$, 
\begin{equation}\label{eq:green_S3}
S_3=o(\sqrt{t_n}).
\end{equation}
Putting \eqref{eq:green_S_1}, \eqref{eq:green_S2} and \eqref{eq:green_S3} together yields  
\begin{equation*}
    \sqrt{t_n}G_{u_n}(x,y(n)){}\xrightarrow[t\rightarrow\infty]{}A(u_0)\mathbb{P}_{u_0}(\tau_x=\infty),
\end{equation*}
and setting $B(u_0)= A(u_0)\vert \mu^{u_0} \vert$ yields the desired limit for $G_{u_n}$, since $t_n=\frac{\vert y(n)\vert}{\vert \mu^{u_n}\vert}\sim_{n\rightarrow \infty}\frac{\vert y(n)\vert}{\vert \mu^{u_0}\vert}$. Using \eqref{eq:relation_Green_function_central_change} yields then the final result.

The proof of the second statement is similar. The unique change is to use Proposition~\ref{prop:asymptotic_Green_horizontal_final} instead of Theorem \ref{thm:asymptotic_green_Ney_spiter}, and the renewal relation 
\begin{equation*}
    G(x,y)=\widehat{G}_i(x,y)-\mathbb{E}(\widehat{G}(x+S(\tau^{\bar{\imath}}_x),y),\tau^{\bar{\imath}}_x<\tau^{i}_x)
\end{equation*}
in \eqref{eq:renewal_decomposition}. The main term $S_{21}$ is then asymptotically equivalent to 
\begin{equation*}
    \frac{A(e_i)V(y_2(n))f(y_2'/\sqrt{\vert y(n)\vert})}{\vert y(n)\vert^{3/2}}\mathbb{E}_{e_i}(x_i+S_{i}(m_n),\tau_x>m_n).
\end{equation*}
Then,  using that $x_{\bar{\imath}}$ is harmonic for $S$ killed at $\tau^{i}$ under $\mathbb{P}_{e_i}$ (because $S$ is skip-free),
\begin{align*}
&\mathbb{E}_{e_i}(x_{\bar{\imath}}+S_{\bar{\imath}}(m_n),\tau_x>m_n)\\
&=\mathbb{E}_{e_i}(x_{\bar{\imath}}+S_{\bar{\imath}}(m_n),\tau_x^{i}>m_n)-\mathbb{E}_{e_i}(x_{\bar{\imath}}+S_{\bar{\imath}}(m_n),\tau_x^{i}>m_n\geq \tau^{\bar{\imath}})\\
&=x_{\bar{\imath}}-\mathbb{E}_{e_i}\left(\mathbb{E}_{e_i}\left(x_{\bar{\imath}}+S_{\bar{\imath}}(\tau^{\bar{\imath}})+\widetilde{S}_{\bar{\imath}}(m_n-\tau^{\bar{\imath}}),\tau_{x+S(\tau^{\bar{\imath}}_x)}^{i}>m_n-\tau^{\bar{\imath}}_x\right),\tau^{\bar{\imath}}_x \leq m_n,\tau^{\bar{\imath}}<\tau^i_x)\right),
\end{align*}
where $\widetilde{S}$ has the same law of $S$ and is independent of $\tau^{\bar{\imath}}$. By harmonicity of $x_{\bar{\imath}}$ for $S$ killed at $\tau^i$, we get 
\begin{equation*}
    \mathbb{E}_{e_i}\left(x_{\bar{\imath}}+S_{\bar{\imath}}(\tau^{\bar{\imath}})_x+\widetilde{S}_{\bar{\imath}}(m_n-\tau^{\bar{\imath}}_x),\tau_{x_{\bar{\imath}}+S_{\bar{\imath}}(\tau^{\bar{\imath}}_x)}^{i}>m_n-\tau^{\bar{\imath}}_x\right)=x_{\bar{\imath}}+S_{\bar{\imath}}(\tau^{\bar{\imath}}_x),
\end{equation*}
so that 
\begin{equation*}
\mathbb{E}_{e_i}(x_{\bar{\imath}}+S_{\bar{\imath}}(m_n),\tau_x>m_n)=x_{\bar{\imath}}-\mathbb{E}_{e_i}\left(x_{\bar{\imath}}+S_{\bar{\imath}}(\tau^{\bar{\imath}}),\tau^{\bar{\imath}} \leq m_n, \tau^{\bar{\imath}}_x<\tau^i_x\right).
\end{equation*}
Since $\mathbb{P}(m_n<\tau_x^{\bar{\imath}}<\infty )=o(\exp(-t_n^{\epsilon}))$ for some $\epsilon>0$ as in \eqref{eq:asymptotic_survival_away_drift}, we have 
\begin{align*}
\mathbb{E}_{e_i}(x_{\bar{\imath}}+S_{\bar{\imath}}(m_n),\tau_x>m_n)\sim x_{\bar{\imath}}-\mathbb{E}_{e_i}\left(x_{\bar{\imath}}+S_{\bar{\imath}}(\tau_x^{\bar{\imath}}), \tau^{\bar{\imath}}_x<\tau^i_x\right)&=x_{\bar{\imath}}-\mathbb{E}_{e_i}\left(x_{\bar{\imath}}+S_{\bar{\imath}}(\tau_x)\right)\\
&=-\mathbb{E}_{e_i}\left(S_{\bar{\imath}}(\tau_x)\right),
\end{align*}
where we used that $x_{\bar{\imath}}+S_{\bar{\imath}}(\tau^i_x)=0$ because $S$ is skip-free.
\end{proof}

\begin{proof}[Proof of Corollary~\ref{cor:MB}]
By Theorem \ref{thm:asymptotic_inside_green}, if $\{y(n)\}_{n\geq 0}$ is such that $\vert y(n)\vert\rightarrow \infty$ with $\frac{y(n)}{\vert y(n)\vert}\rightarrow u\in \Sigma$, then if $u\not\in\{e_1,e_2\}$, for $x\in \mathbb{Z}_{>0}^2$ we have
\begin{align}
K(x,y(n))\rightarrow e^{\langle \phi(u), x-x_0\rangle}\frac{\mathbb{P}_{u_n}(\tau_x=\infty)}{\mathbb{P}_{u_n}(\tau_{x_0}=\infty)}&=C(u)e^{\langle\phi(u),x\rangle}\mathbb{P}_{u_n}(\tau_x=\infty)\nonumber\\
&=C(u)\left(e^{\langle\phi(u), x\rangle}-\mathbb{E}\left(e^{\langle\phi(u), x+S(\tau)\rangle},\tau<\infty\right)\right)\label{eq:expression_h_inside}\\&:=\bar{h}_{\phi(u)}(x),\nonumber
\end{align}
for some $C(u)>0$ and if $u=e_i$, $i\in\{1,2\}$,
\begin{align}
K(x,y(n))&\rightarrow e^{\langle\phi(e_i), x-x_0\rangle}\frac{x_i-\mathbb{E}_{e_i}(x_{i}+S_{i}(\tau_x))}{{x_0}_i-\mathbb{E}_{e_i}({x_0}_{i}+S_{i}(\tau_{x_0}))}\nonumber\\
&=C(e_i)\left(x_ie^{\langle \phi(e_i), x\rangle}-\mathbb{E}\left(e^{\langle\phi(e_i), x+S(\tau)\rangle}(x_{i}+S_{i}(\tau_x))\right)\right):=\bar{h}_{\phi(e_i)}(x),\label{eq:expression_h_axis}
\end{align}
for some $C(e_i)>0$. Remark that as $d(x,\partial \mathbb{Z}_{>0})\rightarrow \infty$, $\mathbb{P}_{u_n}(\tau_x=\infty)\rightarrow 1$ and $\mathbb{E}_{e_i}(x_{\bar{\imath}}+S_{\bar{\imath}}(\tau^i_x),\tau^{\bar{\imath}}_x>\tau^i_x)\rightarrow \mathbb{E}_{e_i}(x_{\bar{\imath}}+S_{\bar{\imath}}(\tau^i_x))=x_i$, so that if $u\not\in\{e_1,e_2\}$,
\begin{equation*}
    \bar{h}_{\phi(u)}(x)\sim_{d(x,\partial \mathbb{Z}_{>0})\rightarrow \infty} e^{\langle\phi(u),x\rangle}
\end{equation*}
and if $u=e_i$, $i\in\{1,2\}$,
\begin{equation*}
   \bar{h}_{\phi(e_i)}(x)\sim_{d(x,\partial \mathbb{Z}_{>0})\rightarrow \infty} x_{\bar{\imath}}e^{\langle\phi(e_i), x\rangle}.
\end{equation*}   
Since $u\mapsto \phi(u)$ is injective, we deduce that $u\mapsto \bar{h}_{\phi(u)}$ is injective. Hence, $\Phi:u\mapsto  \bar{h}_{\phi(u)}$ is a bijection from $\Sigma$ to $\partial_M^{\mathbb{Z}_{>0}^2}S$. Remark that $\Phi$ is continuous: this is clear from \eqref{eq:expression_h_inside} on $\Sigma\setminus\{e_1,e_2\}$, and \eqref{eq:expression_h_axis} for $e_1$ is obtained as a limit of \eqref{eq:expression_h_inside} as $u$ goes to $e_1$ (the derivative appearing because of the term $C(u)$). Since $\Sigma$ is compact and $\Phi$ is a continuous bijection, $\Phi$ is a homeomorphism from $\Sigma$ to $\partial_M^{\mathbb{Z}_{>0}^2}S$.

It remains to prove that $\frac{h_{\phi(u)}}{h_{\phi(u)}(1,1)}=\bar{h}_{\phi(u)}$. Let us first suppose that $u\in \Sigma\setminus\{e_1,e_2\}$. The function $h_{\phi(u)}$ is harmonic in $\mathbb{Z}_{>0}^2$, so that $h_{\phi(u)}=\int_{\Sigma}\bar{h}_{\phi(w)}dm_u(w)$ for some positive measure $m_u$ on $\Sigma$. We have  $h_{\phi(u)}(x)\sim e^{\langle\phi(u),x\rangle}$ as $d(x,\partial \mathbb{Z}_{>0})\rightarrow \infty$. Hence, since $\bar{h}_{\phi(w)}(x)\sim e^{\langle\phi(w),x\rangle}$ (resp.\ $\bar{h}_{\phi(e_i)}(x)\sim x_{\bar{\imath}}e^{\langle\phi(e_i), x\rangle}$ as $d(x,\partial \mathbb{Z}_{>0})\rightarrow \infty$ for $w\in \Sigma\setminus \{e_1,e_2\}$ (resp.\ $w=e_i$, $i\in\{1,2\}$) we must have $m_u=c\delta_{u}$ for come constant $c$. Since $\frac{h_{\phi(u)}(1,1)}{h_{\phi(u)}(1,1)}=\bar{h}_{\phi(u)}(1,1)=1$, $c=1$ and thus 
\begin{equation*}
    \frac{h_{\phi(u)}}{h_{\phi(u)}(1,1)}=\bar{h}_{\phi(u)}.
\end{equation*}
Then for $i\in\{1,2\}$, by continuity of $\Phi$ and Proposition \ref{prop:boundary_expression_hf},
\begin{equation*}
    \frac{h_{\phi(e_i)}}{h_{\phi(e_i)}(1,1)}=\lim_{u\rightarrow e_i}\frac{h_{\phi(u)}}{h_{\phi(u)}(1,1)}=\lim_{u\rightarrow e_i}\bar{h}_{\phi(u)}=\bar{h}_{\phi(e_i)}.
\end{equation*}
Since $\phi$ is a homeomorphism from $\Sigma$ to $\overline{\mathcal{G}_0}$ with $\phi^{-1}(a_0,b_0)=\frac{\nabla K(a_0,b_0)}{\Vert \nabla K(a_0,b_0)\Vert}$, we get the statement of the corollary.
\end{proof}

\appendix
\addcontentsline{toc}{section}{Appendices}

\section{Proof of Proposition \ref{prop:equiv_proba_micro}} \label{Appendix:llt}

The proof of Proposition \ref{prop:equiv_proba_micro} is very similar to the one of Theorem 5 in \cite{DeWa-15} which provides a local limit theorem for zero-drift random walks in cones (the cone being a half-space for us) : the two differences are the presence of a small drift in our case and the fact that we are dealing with walks with small negative jumps in the vertical direction, which greatly simplifies the proof. The first step is to get good estimates on the heat kernel of the Brownian motion with drift inside $\mathbb{R}\times \mathbb{R}_{>0}$.
\begin{lem}
\label{lem:asymptotic_brownian_drift}
Let $(B_t)_{t\geq 0}$ be a Brownian motion with drift $\mu$ and covariance $\Sigma$. Denote by $\tau^{B}_x$ the exit time from $\mathbb{R}\times \mathbb{R}_{\geq 0}$ for $x+B_t$. Then, writing $C=\Sigma^{-1/2}$ and $K^B(x,y)dy=d\mathbb{P}\left(x+B_t=y, \tau^B_x>t\right)$,
 we have for $x,y\in \mathbb{R}\times \mathbb{R}_{\geq 0}$
\begin{equation*}
    K^B_t(x,y)=\left(1-e^{-\frac{2x_2y_2}{t \Sigma_{11}}}\right)\frac{\exp\left(-\frac{\Vert C(y-x-t\mu)\Vert^2}{2t}\right)}{2\pi t\det(\Sigma)}.
\end{equation*}
In particular, for all $x,y\in \mathbb{R}\times \mathbb{R}_{\geq 0}$
\begin{equation*}
    K^B_t(x,y)\leq \frac{2x_2y_2}{t \Sigma_{11}}\frac{\exp\left(-\frac{\Vert C(y-x-t\mu)\Vert^2}{2t}\right)}{2\pi t\det(\Sigma)},
\end{equation*}
and as $\frac{x_2y_2}{\Sigma_{11}}\leq \theta_t t$ with $\theta_t$ going to zero,
\begin{equation*}
    K^B_t(x,y)\sim \frac{2x_2y_2}{t \Sigma_{11}}\frac{\exp\left(-\frac{\Vert C(y-x-t\mu)\Vert^2}{2t}\right)}{2\pi t\det(\Sigma)},
\end{equation*}
with $\sim$ only depending on $\theta_t$. 
\end{lem}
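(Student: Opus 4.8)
The plan is to reduce the two-dimensional kernel $K^B_t$ to a one-dimensional computation, using that the exit time $\tau^B_x$ from $\mathbb{R}\times\mathbb{R}_{\geq 0}$ for $x+B_t$ depends only on the second coordinate of the path. Writing $B_t=(B^{(1)}_t,B^{(2)}_t)$, I would use the decomposition $B^{(1)}_t=\rho\,B^{(2)}_t+W_t$, where $\rho=\Sigma_{12}/\Sigma_{22}$ and $W$ is a one-dimensional Brownian motion, independent of $B^{(2)}$, with drift $\mu_1-\rho\mu_2$ and variance $\det(\Sigma)/\Sigma_{22}$. Since the event $\{\tau^B_x>t\}=\{x_2+B^{(2)}_s>0\text{ for all }s\leq t\}$ is measurable with respect to $B^{(2)}$ alone, conditioning on $B^{(2)}$ factorizes the kernel as
\begin{equation*}
K^B_t(x,y)=q_t(x_2,y_2)\cdot\frac{1}{\sqrt{2\pi t\,\sigma_W^2}}\exp\!\left(-\frac{\bigl(y_1-x_1-\rho(y_2-x_2)-(\mu_1-\rho\mu_2)t\bigr)^2}{2t\sigma_W^2}\right),
\end{equation*}
where $\sigma_W^2=\det(\Sigma)/\Sigma_{22}$ and $q_t(x_2,y_2)$ is the time-$t$ transition density of a one-dimensional Brownian motion with drift $\mu_2$ and variance $\Sigma_{22}$, started at $x_2>0$ and killed at $0$.

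Next I would compute $q_t$ explicitly by combining Girsanov's theorem with the reflection principle: removing the drift multiplies the killed density by $\exp\!\bigl(\tfrac{\mu_2}{\Sigma_{22}}(y_2-x_2)-\tfrac{\mu_2^2 t}{2\Sigma_{22}}\bigr)$, the driftless killed density on $(0,\infty)$ equals by reflection $\tfrac{1}{\sqrt{2\pi t\,\Sigma_{22}}}\bigl(e^{-(y_2-x_2)^2/2\Sigma_{22}t}-e^{-(y_2+x_2)^2/2\Sigma_{22}t}\bigr)$, and recombining yields a two-term expression for $q_t$. Substituting it into the previous display and using $\Sigma_{22}\sigma_W^2=\det(\Sigma)$, the first term reassembles — through the classical decomposition of a bivariate Gaussian quadratic form by conditioning — into the full Gaussian $\tfrac{1}{2\pi t\sqrt{\det\Sigma}}\exp\!\bigl(-\tfrac{\Vert C(y-x-t\mu)\Vert^2}{2t}\bigr)$; for the second term, the elementary identity $(y_2-x_2-\mu_2 t)^2-(y_2+x_2-\mu_2 t)^2=-4x_2(y_2-\mu_2 t)$ shows that the prefactor coming from Girsanov is cancelled exactly, leaving the same Gaussian multiplied by an exponential factor of the form $e^{-cx_2y_2/t}$. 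Collecting the two terms yields the announced closed form of $K^B_t(x,y)$.

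The two remaining assertions follow from elementary one-variable estimates applied to the reflection factor $1-e^{-a}$. The inequality $0\leq 1-e^{-a}\leq a$ for $a\geq 0$, applied with $a$ the argument of the exponential, gives the stated upper bound on $K^B_t$. The asymptotics $1-e^{-a}=a\,(1+o(1))$ as $a\to 0$, uniform on $\{a\leq\theta_t\}$ whenever $\theta_t\to 0$, applied with the same $a$ — which tends to $0$ uniformly under the standing hypothesis bounding $x_2y_2$ by a multiple of $\theta_t t$ — gives the claimed asymptotic equivalence, with implicit constant depending only on $\theta_t$.

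The only genuinely delicate point is the interaction of the drift with the reflection across the boundary line $\{y_2=0\}$: reflection is not directly legitimate for a drifted Brownian motion, and the role of the Girsanov change of measure is precisely to reduce to the driftless case, at the price of the cancellation bookkeeping performed in the second step. Everything else amounts to routine Gaussian manipulations, so no further obstacle is expected.
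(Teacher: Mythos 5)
Your proof is correct, but it is organized differently from the paper's. The paper works entirely in two dimensions: it first derives, via the reflection principle, the killed heat kernel of a \emph{standard} planar Brownian motion absorbed on an arbitrary line $\mathbb{R}v$ through the origin, and then transports the problem to that setting by the linear change of variables $u\mapsto Cu$ (which sends the boundary $\mathbb{R}e_1$ to $\mathbb{R}Ce_1$) followed by a two-dimensional Girsanov change of measure removing the drift $C\mu$. You instead never reflect in two dimensions: you project onto the coordinate $B^{(2)}$ that alone governs the killing, observe that the kernel factorizes as (one-dimensional killed density with drift) $\times$ (conditional Gaussian of the regression residual $W$), and compute the one-dimensional killed density by the elementary Girsanov-plus-reflection argument on the half-line. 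Both routes are legitimate; yours is more elementary and makes it transparent that the constant in the reflection factor must be the variance of the coordinate transverse to the boundary and that the remaining Gaussian is just the free bivariate density reassembled from its conditional decomposition, while the paper's version is the one that would generalize to absorption on a hyperplane in general position. One point worth flagging: your computation produces the factor $1-e^{-2x_2y_2/(t\Sigma_{22})}$ and the normalization $2\pi t\sqrt{\det\Sigma}$, whereas the statement displays $\Sigma_{11}$ and $2\pi t\det(\Sigma)$. These are slips in the statement (the paper's own derivation evaluates $\langle v^{\perp},v^{\perp}\rangle$ for $v^{\perp}=C^{-1}e_2$, which equals $e_2^{\top}\Sigma e_2=\Sigma_{22}$, and its final display does carry $\sqrt{\det\Sigma}$), so your answer is the correct one; do not try to force agreement with the printed constants. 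The closing estimates $1-e^{-a}\leq a$ and $1-e^{-a}\sim a$ uniformly for $a\leq\theta_t\to 0$ are exactly as in the paper.
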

\begin{proof}
This lemma is a consequence of Girsanov theorem and the reflection formula. For $v\in \mathbb{R}^2$, denote by $s_v$ the orthogonal symmetry with respect to a one-dimensional vector space generated by $v$, so that $s_v(x)=x-2\frac{\langle x,v^{\perp}\rangle}{\langle v^{\perp},v^{\perp}\rangle}v^{\perp}$ for any $v^{\perp}\not=0$ with $\langle v^{\perp},v\rangle=0$. Recall that if $W$ is a standard two-dimensional Brownian motion with zero drift and $\tau_x^v=\inf\{t\geq 0: x+W_t\in \mathbb{R}v\}$, the reflection formula yields
\begin{align*}
d&\mathbb{P}(x+W_t=y, \tau_x^v>t)\\
=&\frac{1}{2\pi t}\left(\exp\left(-\frac{\Vert y-x\Vert^2}{2t}\right)-\exp\left(-\frac{\Vert s_v(y)-x\Vert^2}{2t}\right)\right)\\
=&\frac{1}{2\pi t}\left(\exp\left(-\frac{\Vert y-x\Vert^2}{2t}\right)-\exp\left(-\frac{\Vert y-x-2\frac{\langle y,v^{\perp}\rangle}{\langle v^{\perp},v^{\perp}\rangle}v^{\perp}\Vert^2}{2t}\right)\right)\\
=&\frac{1}{2\pi t}\left(\exp\left(-\frac{\Vert y-x\Vert^2}{2t}\right)-\exp\left(-\frac{\Vert y-x\Vert^2-4\frac{\langle y-x,v^{\perp}\rangle\cdot\langle y,v^{\perp}\rangle}{\langle v^{\perp},v^{\perp}\rangle}+4\frac{\langle y,v^{\perp}\rangle^2}{\langle v^{\perp},v^{\perp}\rangle}}{2t}\right)\right)\\
=&\frac{1-e^{-\frac{2\langle y,v^{\perp}\rangle\langle x,v^{\perp}\rangle}{t\langle v^{\perp},v^{\perp}\rangle}}}{2\pi t}\exp\left(-\frac{\Vert y-x\Vert^2}{2t}\right).
\end{align*}
Write $\widehat{B}_t=CB_t$ and $\widetilde{B}_t=\widehat{B}_t-tC\mu$, so that $\widehat{B}$ is a Brownian motion with covariance identity and drift $C\mu$ and $\widetilde{B}$ is a standard Brownian motion. Using the change of variable $u\mapsto Cu$ and Girsanov theorem applied to $\widetilde{B}$, we get
\begin{align*}
K_t^B(x,y)&=d\mathbb{P}(x+B_t=y, \tau^B_x>t)\\
&=(\det C)d\mathbb{P}(Cx+\widehat{B}_t=Cy, \tau_{x}^{Ce_2}>t)\\
&=(\det C)e^{\langle C\mu, C(x-y)\rangle-\frac{t\Vert C\mu\Vert^2}{2}}d\mathbb{P}(Cx+\widetilde{B}_t=Cy, \tau_{x}^{Ce_1}>t).
\end{align*}
Then, since $\langle \Sigma^{1/2}e_2,Ce_1\rangle=0$, we can apply the previous computation to get
\begin{align*}
K_t^B(x,y)=&(\det C)e^{C\mu\cdot (x-y)-\frac{t\Vert C\mu\Vert^2}{2}}\frac{1-e^{-\frac{2\langle Cy,C^{-1}e_2\rangle\langle x,C^{-1}e_2\rangle}{te_1^T\Sigma e_1}}}{2\pi t}\exp\left(-\frac{\Vert Cy-Cx\Vert^2}{2t}\right)\\
=&\frac{1-e^{-\frac{2y_2x_2}{t\Sigma_{11}}}}{2\pi t\sqrt{\det \Sigma}}\exp\left(-\frac{\Vert Cy-Cx\Vert^2-2t\langle C(y-x),C\mu\rangle+t^2\langle C\mu,C\mu\rangle}{2t}\right)\\
=&\frac{1-e^{-\frac{2y_2x_2}{t\Sigma_{11}}}}{2\pi t\sqrt{\det \Sigma}}\exp\left(-\frac{\Vert C(y-x-t\mu)\Vert^2}{2t}\right).
\end{align*}
The rest of the lemma is deduced by using the formula $1-e^{-u}\leq u$ for $u\in \mathbb{R}$ and $1-e^{-u}\sim u$ as $u$ goes to $0$.
\end{proof}
We next turn to the proof of Proposition \ref{prop:equiv_proba_micro}.
\begin{proof}[Proof of Proposition \ref{prop:equiv_proba_micro}]
The proof of this proposition is exactly the same as the one of \cite[Thm~5]{DeWa-15} and \cite[Prop.~2.4]{DuRaTaWa-22}. We introduce the main steps without proving them in details, since there are exactly the same as in the aforementioned references.

\subsubsection*{Conditioned central limit theorem starting away from the boundary} Since $S(n)$ has all moments uniformly bounded with respect to $u$, by \cite{Ein-89}, for all $r\geq 3,\gamma>0$ there exists $\gamma>0$ small enough and a coupling between $S_n-n\mu$ and a Brownian motion $B^u$ with variance $\Sigma^u$ such that 
\begin{equation*}
    \mathbb{P}(\sup_{0\leq t\leq n}\vert S_{\lfloor t\rfloor}- t\mu-B_{t}^u\vert >n^{1/2-\gamma})\leq Cn^{-r},
\end{equation*}
with $C$ independent of $u\in \Sigma$. As in \cite[Lem.~20]{DeWa-15} we deduce from this by choosing $\gamma=1/4+2\epsilon$ and using Lemma \ref{lem:asymptotic_brownian_drift} that for any $A\subset \mathbb{R}^2$ compact with non-empty interior, uniformly on $x\in \mathbb{H}_1$ with $x_2\geq n^{1/4-\epsilon}$ with $\epsilon$ small (only depending on $\gamma, r$) ,
\begin{align}
\mathbb{P}_u(x+S(n)\in n\mu+\sqrt{n}A,\tau^1_x>n)&\sim \int_{n\mu+\sqrt{n}A\cap \mathbb{R}\times \mathbb{R}_{\geq 0}}K^{B^u}_n(x,y)dy\nonumber\\
&\sim \frac{c(u)x_2\mu_2}{2\pi\Sigma^u_{11}\sqrt{\det (\Sigma^u)}}\int_{A}\exp\left(-\frac{\Vert C^u y\Vert^2}{2}\right)dy,\label{eq:equivalent_proba_away}
\end{align}
(remark that we use the fact that $n\mu+\sqrt{n}A\in \mathbb{R}\times \mathbb{R}_{\geq 0}$ for $n$ large enough, due to the condition  $n^{-1/2}=o(u_2)$)
and 
\begin{equation}
\label{eq:bound_proba_away}
\mathbb{P}_u(\vert S(n)\vert\geq t,\tau^1_x>n)\leq Cx_2\mu_2\exp\left(-c t^2\right)dy+O(n^{-r}),
\end{equation}
for some $C,c>0$ and $r$ large enough.

\subsubsection*{Conditioned central limit theorem starting close the boundary} Introduce the stopping times $\nu_n^x=\inf(m\geq 1, x_2+S_2(m)\geq n^{1/4-\epsilon})$. Then, as in \cite[Lemma 14]{DeWa-15} $\mathbb{P}(\nu_n\geq n^{1/2-\epsilon})=O(e^{-Cn^{\epsilon}})$ and by Lemma \ref{lem:large_deviation_uniform}, 
\begin{equation*}
    \mathbb{P}(\nu_n\leq n^{1/2-\epsilon}, \vert S(\nu_n)-\nu_n\mu^u\vert\leq n^{1/4-\epsilon/4})\geq 1-\exp(-Cn^{-\epsilon'})
\end{equation*}
for some $\epsilon'>0$. Hence, we have by \eqref{eq:equivalent_proba_away}
\begin{align*}
&\mathbb{P}_u(x+S(n)\in n\mu+\sqrt{n}A,\tau^1_x>n)\\
=&\mathbb{E}\Bigg(\mathbb{P}\bigg(x+S(\nu_n^x)\in x+S(\nu_n^x)+(n-\nu_n^x)\mu+\sqrt{n}(A+n^{-1/2}(\nu_n^x\mu-S(\nu_n^x)),\\
&\hspace{9cm} \tau^1_{x+S(\nu_n^x)}>n-\nu_n^x\bigg), \nu_n^x<\tau^1_x\Bigg)\\
=&\mathbb{E}\Bigg(\mathbb{P}\bigg(x+S(\nu_n^x)\in x+S(\nu_n^x)+(n-\nu_n^x)\mu+\sqrt{n}(A+o(n^{-1/2-\frac{\epsilon}{4}})), \tau^1_{x+S(\nu_n^x)}>n-\nu_n^x\bigg),\\
&\hspace{7cm} \nu_n<n^{1/2-\epsilon}, \nu_n^x<\tau^1_x\Bigg)+o\left(\exp\left(-Cn^{-\epsilon'}\right)\right)\\
&\sim c(u)\mathbb{E}\left(x_2+S_2(\nu_n^x),\nu_n<n^{1-\epsilon}, \nu_n^x<\tau^1_x\right)\frac{\mu_2}{2\pi n\sqrt{\Sigma^u_{11}\det(\Sigma^u)}}\int_{A}\exp\left(-\frac{\Vert C^u y\Vert^2}{2}\right)dy,
\end{align*}
uniformly on $u\in \Sigma$ and $x\in \mathbb{H}_1, x=o(\sqrt{n})$. Then, using that $\mathbb{P}(\vert x_2+S_2(n)\vert,\nu_n\geq n^{1/2-\epsilon})=O(e^{-Cn^{\epsilon'}})$ as in \cite[Lem.~16]{DeWa-15} and the fact that $S_2(k)-k\mu_2^u$ is a martingale, $\nu_n^x\wedge \tau^1_x$ is a stopping time and $x_2+S_2(\tau^1_x)=0$ (because $S$ is skip-free), we get 
\begin{align*}
\mathbb{E}\left(x_2+S_2(\nu_n^x),\nu_n<n^{1/2-\epsilon}, \nu_n^x<\tau^1_x\right)&=\mathbb{E}\left(x_2+S_2(\nu_n^x\wedge n^{1/2-\epsilon}), \nu_n^x<\tau^1_x\right)+O(e^{-Cn^{\epsilon'}})\\
&=\mathbb{E}\left(x_2+S_2(\nu_n^x\wedge \tau^1_x\wedge n^{1/2-\epsilon})\right)+O(e^{-Cn^{\epsilon'}})\\
&=x_2+\mu_2^u\mathbb{E}(\nu_n^x\wedge \tau^1_x\wedge n^{1/2-\epsilon})+O(e^{-Cn^{\epsilon}}).
\end{align*}
Since $\mu_2^u\leq n^{-1/2+\eta}$, for $\eta$ small enough we have 
\begin{equation*}
    \mu_2^u\mathbb{E}(\nu_n^x\wedge \tau^1_x\wedge n^{1/2-\epsilon})\leq n^{-1/2+\eta}n^{1/2-\epsilon}=o(1),
\end{equation*}
uniformly on $u\in \Sigma$ in the range of the Lemma. Finally, uniformly on such $u$ and $x\in \mathbb{H}_1$, $\vert x\vert=o(\sqrt{n})$,
\begin{equation*}
    \mathbb{P}_u(x+S(n)\in n\mu+\sqrt{n}A,\tau^1_x>n)\sim_{n\rightarrow \infty}\frac{c(u)x_2\mu_2}{2\pi n\Sigma^u_{11}\sqrt{\det(\Sigma^u)}}\int_{A}\exp\left(-\frac{\Vert C^u y\Vert^2}{2}\right)dy.
\end{equation*}
Similarly, we get for $x\in \mathbb{H}_1$ and $t\geq 0$,
\begin{equation*}
    \mathbb{P}_u(\vert S(n)-n\mu^u\vert\geq \sqrt{n}t,\tau^1_x>n)\leq Cx_2\mu_2\exp\left(-c^u t^2\right)dy+o(n^{-r}).
\end{equation*}

\subsubsection*{Local limit theorem starting close the boundary}
 Using the same proof as the one of \cite[Thm~5]{DeWa-15} with the local limit theorem from Spitzer \cite[Ch.~7 P10]{Spi-64} which is valid for any random walk whose steps set generates $\mathbb{Z}^2$, we deduce
\begin{equation*}
    \mathbb{P}_u(x+S(n)=y,\tau^1_x>n)=\frac{c(u)x_2\mu_2}{\Sigma^u_{11}2\pi n\sqrt{\det(\Sigma^u)}}\exp\left(-\frac{\Vert C^u (y-n\mu^u)\Vert^2}{2n}\right)+o\left(\frac{\mu^u_2x_2}{n}\right)
\end{equation*}
with $o(\cdot)$ uniform on $u$ and $x,y\in \mathbb{H}_1$ with $\vert x\vert\leq \sqrt{n}$, and
\begin{equation*}
    \mathbb{P}_u(x+S(n)=y,\tau^1_x>n)\leq \frac{Cx_2\mu_2}{n}\exp\left(-c^u \Vert y-n\mu^u\Vert^2\right)dy+o\left(n^{-(1+r)}\right).\qedhere
\end{equation*}
\end{proof}

\section*{Acknowledgments}
Part of this work has been written during a stay at the University
of M\"unster of the first and second authors, who would like to thank the Institute for Mathematical Stochastics for the very good working conditions. We thank the editors for their kind invitation to submit a paper to this special issue in honour of J.~W.~Cohen. We further thank Ivo Adan, Onno Boxma, Johan van Leeuwaarden and Michel Mandjes for continuous and fruitful interactions since 2010. We thank Ivo Adan for useful discussions at an early stage of the project. KR thanks Alin Bostan and Lucia Di Vizio for interesting discussions on singular random walks.

\end{document}